\tikzset{->-/.style={decoration={
  markings,
  mark=at position .45 with {\arrow{>}}},postaction={decorate}}}
\newtheorem{definition}{Definition}
\newtheorem{theorem}[definition]{Theorem}
\newtheorem{proposition}[definition]{Proposition}
\newtheorem{corollary}[definition]{Corollary}
\newtheorem{lemma}[definition]{Lemma}
\newtheorem{remark}[definition]{Remark}
\def\C{\mathbb{C}}
\def\Q{\mathbb{Q}}
\def\Z{\mathbb{Z}}
\def\P{\mathsf{P}}
\def\PNCd{\mathsf{PNC}_d}
\def\PSCd{\mathsf{PSC}_d}
\def\tPNCd{\widetilde{\mathsf{PNC}}_d}
\def\tPSCd{\widetilde{\mathsf{PSC}}_d}
\def\CH{\mathrm{CH}}
\def\tDeltas{\widetilde{\Delta}_{\mathrm{sm}}}
\def\tDeltan{\widetilde{\Delta}_{\mathrm{node}}}
\def\tXs{\widetilde{X}_{\mathrm{sm}}}
\def\tXn{\widetilde{X}_{\mathrm{node}}}
\def\bV{\overline{\mathbb{V}}}
\def\V{\mathbb{V}}
\def\Sym{\operatorname{Sym}}
\def\fTn{\mathfrak{T}_\mathrm{node}}
\title{The Chow ring of the stack of plane nodal curves}
\author{Alessio Cela, Ajith Urundolil Kumaran, and Xiaohan Yan,\\with an appendix by Alexis Aumonier}
\date{\vspace{-5ex}}
\begin{document}

\maketitle

\begin{abstract}
We compute the rational Chow ring of the moduli stacks of planar smooth and at worst nodal curves of fixed degree and express it in terms of tautological classes. Along the way, we extend Vial's results on Chow groups of Brauer-Severi varieties to $G$-equivariant settings.
\end{abstract}

\tableofcontents

\section{Introduction}

Let $\PNCd$ (resp. $\PSCd$) be the the moduli stack over the big étale site  $\mathrm{Sch}_{\text{ét}}$ of $\mathbb{C}$-schemes whose objects over a $\C$-scheme $S$ are families of planar nodal (resp. smooth) curves.
\[
  \begin{tikzcd}
    C \arrow[hookrightarrow]{r}{i} \arrow[swap]{dr}{p\circ i} & \P \arrow{d}{p} \\
     & S
  \end{tikzcd}
\]
Specifically:
\begin{enumerate}[label=\alph*)]
    \item[$(i)$] $C \to S$ is a flat and proper morphisms such that every geometric fiber is a connected nodal (resp. smooth) curve;
    \item[$(ii)$] $\P \to S$ is a Brauer–Severi scheme of relative dimension $2$;
    \item[$(iii)$] $C \to \P$ is a closed embedding such that for every $s \in S$ the fiber $C_s \subseteq \mathbb{P}^2$ has degree $d$.
\end{enumerate}
A morphism $(C \to \P \to S) \to (C' \to \P' \to S')$ is the data of maps $S \to S'$,$C \to C'$ and $\P \to \P'$ such that both squares in the diagram

\[ \begin{tikzcd}
  C\rar\dar\drar[phantom, "\square"] & \P\rar\dar\drar[phantom, "\square"] & S\dar \\%
C'\rar & \P'\rar & S'
\end{tikzcd}
\]
are cartesian.

In this paper, we compute the rational Chow rings \footnote{In this paper, we work always with rational Chow groups/rings, i.e. with $\Q$-coefficients.} $\mathrm{CH}^*(\PNCd)$ and $\mathrm{CH}^*(\PSCd)$ for every $d \in \Z_{\geq 1}$. Moreover, note that there exists natural morphism
\[
\epsilon_d: \PNCd \to \mathfrak{M}_g
\]
where $\mathfrak{M}_g$ is the moduli of prestable curves of arithmetic genus $g$, with
\[
g=\frac{(d-1)(d-2)}{2}.
\]
We express the pullbacks of the $\lambda$-classes and the $\delta$-class, given by the divisor of nodal curves, in terms of the generators of our presentation of $\mathrm{CH}^*(\PNCd)$ and $\mathrm{CH}^*(\PSCd)$.

Stacks of complete intersections have already been a subject of intensive study, for example, in Benoist \cite{Be1,Be2,Be3} in Asgarli-Inchiostro \cite{asgarliInchiostro}. 

In their work \cite{BGK}, Berdnikov, Gorinov, and Konovalov computed the rational cohomology of the space of equations of nodal cubic and quartic plane curves, and of nodal cubic surfaces in the projective 3-space. Appendix \ref{Appendix 2} connects our work with theirs. As for Chow rings, Di Lorenzo \cite{DiLorenzo} provided a general method for computing rational Chow rings for the moduli of \textit{smooth} complete intersections in projective spaces. 

In this paper, we restrict our consideration to plane curves but allow nodal singularities.

Finally, we remark that, while the Chow rings of the full moduli stacks $\mathfrak{M}_g$ %$\overline{\mathcal{M}}_g$ (resp. $\mathcal{M}_g$) of nodal (resp. smooth) stable curves of fixed genus $g$ is extremely complicated, both $\mathrm{CH}^*(\PNCd)$ and $\mathrm{CH}^*(\PSCd)$ turn out to have quite simple expressions. 
of nodal prestable curves of fixed genus $g$ are extremely complicated, both $\mathrm{CH}^*(\PNCd)$ and $\mathrm{CH}^*(\PSCd)$ turn out to have simple expressions.

\subsection{Results and methods}

 Denote by $c_i\ (i=1,2,3)$ the Chern classes of the tautological bundle over $\mathrm{BGL}_3$. The stack $\mathrm{BGL}_3$ arises naturally in our construction (see Lemma \ref{lemma: iso up to gerb}), and consequently $\CH^*(\PNCd)$ and $\CH^*(\PSCd)$ are algebras over $\CH^*(\mathrm{BGL}_3)$. 

\begin{theorem}\label{thm1}
    \begin{equation*}
    \CH^*(\PNCd) = \begin{cases}
    \Q[c_1,c_2] & \quad d=1,2, \\
    \Q[c_1,c_2,c_3] / (c_1^2, c_1c_2, c_1c_3) & \quad d=3 , \\
    \Q[c_1] / (c_1^4) & \quad d \geq 4.
    \end{cases}
    \end{equation*}
\end{theorem}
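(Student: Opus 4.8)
The plan is to reduce the statement to an equivariant intersection‑theory computation on a space of equations and then to run an excision argument over the locus of non‑nodal curves. By Lemma~\ref{lemma: iso up to gerb} the rational Chow ring of $\PNCd$ is identified with the $\mathrm{GL}_3$‑equivariant Chow ring $\CH^*_{\mathrm{GL}_3}(\hat U_d)$, where $W_d:=\Sym^d E^\vee$ is the space of degree‑$d$ forms in three variables ($E$ the standard representation, so $c_i=c_i(E)$), $\mathbb{A}(W_d)$ is the corresponding affine space, and $\hat U_d\subseteq\mathbb{A}(W_d)$ is the invariant open locus of forms $f$ whose zero scheme $V(f)\subseteq\mathbb{P}^2=\mathbb{P}(E)$ is a reduced, at worst nodal plane curve of degree $d$ (any plane curve being automatically connected, by B\'ezout). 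Since $\mathbb{A}(W_d)$ is an equivariantly contractible representation, $\CH^*_{\mathrm{GL}_3}(\mathbb{A}(W_d))=\CH^*(\mathrm{BGL}_3)=\Q[c_1,c_2,c_3]$, and the localization sequence gives a surjection
\[
\Q[c_1,c_2,c_3]\ \twoheadrightarrow\ \CH^*(\PNCd),
\]
whose kernel $I_d$ is the image of the proper pushforward $\iota_*\colon\CH^{\mathrm{GL}_3}_*(Y_d)\to\CH^{\mathrm{GL}_3}_*(\mathbb{A}(W_d))$ along the closed complement $Y_d:=\mathbb{A}(W_d)\setminus\hat U_d$. As $\iota_*$ is $\Q[c_1,c_2,c_3]$‑linear, $I_d$ is the ideal generated by the images of a set of $\Q[c_1,c_2,c_3]$‑module generators of $\CH^{\mathrm{GL}_3}_*(Y_d)$, so everything reduces to the geometry of $Y_d$ and the equivariant classes of its strata.

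For $d=1$ every line is smooth, so $Y_1=\{0\}$ and $I_1=\bigl(c_{\mathrm{top}}(W_1)\bigr)=(c_3)$, giving $\CH^*(\PNC_1)=\Q[c_1,c_2]$. For $d=2$, $Y_2$ is the affine cone over the Veronese surface of double lines, a rank‑$\le 1$ locus of codimension $3$; resolving it by the tautological line bundle over $\mathbb{P}(E^\vee)$ and pushing forward one computes $[Y_2]$ explicitly and finds $I_2=(c_3-c_1c_2)$, so $\CH^*(\PNC_2)\cong\Q[c_1,c_2]$. For $d\ge 3$ the locus $Y_d$ is irreducible of codimension $2$, with a stratification by singularity type: the dense stratum is the cuspidal locus (one ordinary cusp $A_2$, otherwise smooth); codimension $3$ carries the tacnodal ($A_3$) and node‑plus‑cusp ($A_1{+}A_2$) strata; codimension $4$ carries the ordinary triple point ($D_4$), the ramphoid cusp ($A_4$), and the two‑singularity combinations; the non‑reduced locus sits in codimension $2d-2\ge 4$ and is contained in the closure of the cuspidal locus. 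Because $I_d$ is an ideal, once $\CH^4(\mathrm{BGL}_3)\subseteq I_d$ the whole of $\CH^{\ge 4}(\mathrm{BGL}_3)$ lies in $I_d$; hence it suffices to determine the pushforwards of the fundamental classes of the strata of codimension $\le 4$, together with the $\Q[c_1,c_2,c_3]$‑module generators of their Chow groups in that range.

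Each stratum is resolved by an incidence variety. For a one‑point singularity type set $\widetilde Z=\{(f,p):V(f)\text{ has the prescribed singularity at }p\}\subseteq\mathbb{A}(W_d)\times\mathbb{P}(E)$; projection to $\mathbb{P}(E)$ realizes $\widetilde Z$ as an equivariant subscheme of the trivial bundle with fibre $W_d$, cut out by vanishing of the relevant jets of $f$ at $p$ (an equivariant subbundle condition of the expected rank) and, where needed, by discriminant/Hessian‑rank conditions on the leading part of the singularity (sections of further equivariant bundles on $\mathbb{P}(E)$, handled by Thom--Porteous), so that $[\widetilde Z]$ is a product of Chern classes, computable by the projective bundle formula on $\mathbb{P}(E)$. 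Projection to $\mathbb{A}(W_d)$ is generically finite onto the corresponding stratum of $Y_d$ of degree equal to the number of such singularities on a general member; dividing by this degree and pushing forward (i.e.\ integrating along the fibre $\mathbb{P}(E)=\mathbb{P}^2$, by extracting the appropriate coefficient via the Grothendieck relation) yields the stratum class in $\Q[c_1,c_2,c_3]$. Two‑point strata are treated the same way with $\mathbb{P}(E)\times\mathbb{P}(E)$, or $\mathrm{Hilb}^2(\mathbb{P}^2)$ when the two points may collide, in place of $\mathbb{P}(E)$.

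Substituting the resulting classes into $\Q[c_1,c_2,c_3]/I_d$ one checks: the cuspidal‑locus class $[Y_d]$ has, for $d\ge 4$, a nonzero $c_2$‑coefficient (so one degree‑$2$ relation eliminates $c_2$), while for $d=3$ it is a nonzero multiple of $c_1^2$; the codimension‑$3$ classes, reduced modulo the previous relation, give the relation eliminating $c_3$ for $d\ge 4$ and the relation $c_1c_2$ for $d=3$; the codimension‑$4$ classes then span what remains, producing $c_1^4=0$ for $d\ge 4$ and $c_1c_3$ for $d=3$. Assembling, one obtains $\Q[c_1]/(c_1^4)$ for $d\ge 4$ and $\Q[c_1,c_2,c_3]/(c_1^2,c_1c_2,c_1c_3)$ for $d=3$. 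The main obstacle is precisely this third step: correctly enumerating \emph{all} strata of $Y_d$ of codimension $\le 4$ (including the two‑point and non‑generic incidences) and computing their equivariant classes with the right multiplicities — the deeper singularities and the two‑point strata require second‑order jet data and intersection theory on $\mathrm{Hilb}^2(\mathbb{P}^2)$ — and then verifying that the resulting ideal is \emph{exactly} the one claimed, i.e.\ that $\CH^{\le 4}_{\mathrm{GL}_3}(\hat U_d)$ has the predicted dimensions and that no further relations appear. For $d=3$ one must also reconcile the argument with the possible nontriviality of the ambient Brauer--Severi surface (since $3\mid d$), which is where the equivariant extension of Vial's results on Brauer--Severi varieties enters.
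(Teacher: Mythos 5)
Your first two steps (the gerbe reduction and the excision sequence identifying the kernel ideal $I_d$ with the image of the equivariant pushforward from the closed complement) coincide with the paper's. Where you diverge is in how you compute that image: you stratify $\tDeltan$ by singularity type and treat each stratum by jet bundles and Thom--Porteous, whereas the paper avoids the stratification entirely by resolving $\tDeltan$ all at once with the single incidence variety $\tXn=\{(\ell,F):\ell\in V(F)^{>\mathrm{node}}\}$, observing that $\tXn$ is a vector bundle over the determinantal cone $\V\subseteq\mathsf{Tot}(\Sym^2\Omega_{\mathbb{P}^2}\otimes\mathcal{O}(d))$, and using the equivariant extension of Vial's theorem for the smooth conic bundle $\bV\to\mathbb{P}^2$ to produce six explicit module generators of $\CH^*_{\mathrm{GL}_3}(\bV)$ surjecting onto $\CH^*_{\mathrm{GL}_3}(\tDeltan)$; three of them die and the remaining three classes generate the ideal. (Incidentally, the equivariant Brauer--Severi input enters there, for the conic bundle $\bV$, for every $d$ --- not, as you suggest at the end, to handle the ambient surface when $3\mid d$; that is dealt with uniformly by the torsor argument of Proposition \ref{prop:modulifunc} and Lemma \ref{lemma: iso up to gerb}.)

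The stratification route could in principle be carried out, but as written it has a genuine gap. The reduction to strata of codimension $\le 4$ rests on the claim that ``once $\CH^4(\mathrm{BGL}_3)\subseteq I_d$ the whole of $\CH^{\ge 4}(\mathrm{BGL}_3)$ lies in $I_d$.'' This is false: the ideal of $\Q[c_1,c_2,c_3]$ generated by the degree-$4$ graded piece does not contain $c_2c_3$ or $c_3^2$. For $d\ge 4$ the reduction can be repaired a posteriori, because the degree-$2$ and degree-$3$ relations express $c_2$ and $c_3$ in terms of $c_1$, after which everything of degree $\ge 4$ is a multiple of $c_1^4$; but for $d=3$ it fails outright, since the target ideal $(c_1^2,c_1c_2,c_1c_3)$ does \emph{not} contain $\CH^{\ge4}(\mathrm{BGL}_3)$ (e.g.\ $c_2^2$ and $c_3^2$ survive in the quotient). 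Hence for $d=3$ you must show that the pushforward of \emph{every} generator of $\CH^{\mathrm{GL}_3}_*(\tDeltan)$, coming from strata of arbitrary codimension (conic plus tangent line, concurrent lines, non-reduced cubics, \dots), lands in $(c_1^2,c_1c_2,c_1c_3)$; your framework does not address this, while the paper's three explicit generators $r(\beta_{0y})$ settle it in all degrees at once. Beyond that, none of the enumerative computations on which your conclusion rests (the stratum classes, the multiplicities of the generically finite resolutions, the two-point strata on $\mathrm{Hilb}^2(\mathbb{P}^2)$, and the completeness of your chosen module generators of the Chow groups of the strata) are actually carried out, and the final presentation depends on their precise coefficients.
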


\begin{theorem}\label{thm2}
    \begin{equation*}
    \CH^*(\PSCd) = \begin{cases}
    \mathbb{Q}[c_1,c_2] & \quad d=1,\\
    \mathbb{Q}[c_2] & \quad d=2,\\
    \mathbb{Q} & \quad d\geq 3.
    \end{cases}
    \end{equation*}
% where the $c_i$ are the pull-back of the generators of $\PNCd$ in Theorem \ref{thm1}.
\end{theorem}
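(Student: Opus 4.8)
The plan is to combine Lemma~\ref{lemma: iso up to gerb} with a localization computation on the space of defining equations. Let $V$ be the standard $3$-dimensional representation of $\mathrm{GL}_3$, with tautological bundle $\mathcal V$ on $\mathrm{BGL}_3$ (so $c_i=c_i(\mathcal V)$), and $\mathbb{P}^2=\mathbb{P}(V)$; a degree-$d$ plane curve is then a nonzero $f\in\Sym^d V^\vee$. By Lemma~\ref{lemma: iso up to gerb}, after absorbing the Brauer--Severi twisting and the ambiguity in the choice of embedding line bundle---which contribute only a gerbe banded by a finite abelian group, and at worst a twist of the $\mathrm{GL}_3$-action by a character, neither of which affects a rational Chow ring---we have
\[
  \CH^*(\PSCd)\;\cong\;\CH^*_{\mathrm{GL}_3}\!\big(\Sym^d V^\vee\setminus\Delta_d\big),
\]
where for $d\geq 2$ the set $\Delta_d\subseteq\Sym^d V^\vee$ is the discriminant hypersurface of forms defining a singular curve (note $0\in\Delta_d$), and for $d=1$ one removes instead the origin, since every line is smooth. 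Since $\Sym^d V^\vee$ is a representation, $\CH^*_{\mathrm{GL}_3}(\Sym^d V^\vee)=\CH^*(\mathrm{BGL}_3)=\Q[c_1,c_2,c_3]$, and the localization sequence gives
\[
  \CH^*(\PSCd)\;\cong\;\Q[c_1,c_2,c_3]\,/\,J_d,\qquad J_d:=\mathrm{im}\big(i_*\colon\CH^{\mathrm{GL}_3}_*(\Delta_d)\to\Q[c_1,c_2,c_3]\big),
\]
with $J_d$ an ideal by the projection formula; moreover $J_d\subseteq(c_1,c_2,c_3)$ automatically, since $i_*$ raises codimension.

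For $d=1$, removing the origin from the rank-$3$ bundle $\Sym^1 V^\vee$ imposes exactly its top Chern class $c_3(\mathcal V^\vee)=-c_3$, so $\CH^*(\mathsf{PSC}_1)=\Q[c_1,c_2]$. For $d=2$, Sylvester's law of inertia shows that the nondegenerate quadratic forms constitute a single $\mathrm{GL}_3$-orbit $\mathrm{GL}_3/\mathrm{O}_3$, hence $\big[(\Sym^2 V^\vee\setminus\Delta_2)/\mathrm{GL}_3\big]\cong\mathrm{BO}_3$ and $\CH^*(\mathsf{PSC}_2)=\CH^*(\mathrm{BO}_3;\Q)=\Q[c_2]$; concretely $c_1$ and $c_3$ pull back to $0$ on $\mathrm{BO}_3$ because the standard orthogonal representation has $2$-torsion determinant and is self-dual, i.e.\ $J_2=(c_1,c_3)$.

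The content of the theorem is the case $d\geq 3$, where one must show $J_d=(c_1,c_2,c_3)$. First, $c_1\in J_d$: the discriminant of degree-$d$ plane curves is an $\mathrm{SL}_3$-invariant polynomial of degree $3(d-1)^2$, hence a $\mathrm{GL}_3$-semi-invariant of nonzero weight $\pm d(d-1)^2$, so $[\Delta_d]$ is a nonzero rational multiple of $c_1$; thus $\CH^*(\PSCd)$ is already a quotient of $\Q[c_2,c_3]$. To reach $c_2,c_3$, introduce the incidence variety
\[
  \widetilde{\Delta}_d\;=\;\big\{(f,p)\in\Sym^d V^\vee\times\mathbb{P}(V)\ :\ p\in\mathrm{Sing}\,V(f)\big\},
\]
whose projection to $\mathbb{P}(V)$ exhibits it as a $\mathrm{GL}_3$-equivariant subbundle of rank $\binom{d+2}{2}-3$ (being singular at a fixed point is three independent linear conditions on $f$), so $\CH^*_{\mathrm{GL}_3}(\widetilde{\Delta}_d)\cong\CH^*_{\mathrm{GL}_3}(\mathbb{P}(V))$ is free of rank $3$ over $\Q[c_1,c_2,c_3]$ on $1,h,h^2$, with $h$ the hyperplane class of $\mathbb{P}(\mathcal V)$. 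The first projection $p\colon\widetilde{\Delta}_d\to\Delta_d$ is proper (and birational onto the one-nodal locus), so $i_*p_*\alpha\in J_d$ for all $\alpha$. Pushing $[\widetilde{\Delta}_d]\cdot h^k$, $k=0,1,2$, forward along $\Sym^d V^\vee\times\mathbb{P}(V)\to\Sym^d V^\vee$---using that $[\widetilde{\Delta}_d]$ equals $c_3$ of the bundle of first principal parts $\mathcal P^1(\mathcal O_{\mathbb{P}(\mathcal V)}(d))$ and that the pushforward sends $h^j$ to the Segre class $s_{j-2}(\mathcal V)$---produces three elements of $J_d$ whose leading terms, for $d\geq 3$, are nonzero multiples of $c_1$, $c_2$, $c_3$ modulo lower-weight terms. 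Together with $c_1\in J_d$ this gives $(c_1,c_2,c_3)\subseteq J_d$, hence $J_d=(c_1,c_2,c_3)$ and $\CH^*(\PSCd)=\Q$.

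The main obstacle is this final explicit computation: determining the Chern classes of $\mathcal P^1(\mathcal O_{\mathbb{P}(\mathcal V)}(d))$ and checking that the three Segre-class pushforwards, combined with the discriminant relation, span the whole augmentation ideal precisely for $d\geq 3$, while degenerating correctly for $d\le 2$ (for $d=2$ the would-be ``$c_2$-relation'' must collapse into a multiple of $c_1$, matching $J_2=(c_1,c_3)$; for $d=1$ there is no discriminant and only the $c_3$-relation survives). A secondary point is making the first paragraph rigorous---verifying that the Brauer--Severi and embedding-line-bundle data really do contribute only a finite gerbe and a harmless character twist---which is where the $G$-equivariant extension of Vial's results on Brauer--Severi varieties enters. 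Notably, no analysis of the deeper singular strata of $\Delta_d$ is needed, thanks to the a priori inclusion $J_d\subseteq(c_1,c_2,c_3)$.
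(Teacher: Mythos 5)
Your proposal is correct and follows essentially the same route as the paper: reduce to $\CH^*_{\mathrm{GL}_3}(\Sym^d(V^\vee)\smallsetminus\tDeltas)$ via the gerbe lemma, excise, and compute the image of the pushforward from the incidence variety (a $\mathrm{GL}_3$-equivariant vector subbundle over $\mathbb{P}^2$ of class $c_3(\mathcal{P}^1(\mathcal{O}(d)))$) on the generators $1,h,h^2$; the paper's Lemma \ref{lemma: excission 1} records exactly the three relations whose leading coefficients $-d(d-1)^2$, $-d(d-1)(d-2)$, $-d(d^2-3d+3)$ you would need to check are nonzero for $d\geq 3$. The only genuine divergence is your treatment of $d=2$ via the orbit--stabilizer identification $\mathsf{PSC}_2\cong\mathrm{BO}_3$ (a clean shortcut), and of $d=1$ by hand, whereas the paper reads all three cases off the same uniform formulas; also, note that the equivariant Brauer--Severi machinery is not needed here at all---it only enters the nodal case for the conic bundle $\bV$.
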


We prove the theorems in \S\ref{sec: proof of thm1} and \S\ref{sec: proof of thm2} respectively. Here we content ourselves by giving only the strategy of proof for Theorem \ref{thm1} (more complicated compared to that of Theorem \ref{thm2}).

First, in \S\ref{sec: algebraic presentation}, we realize $\PNCd$ as an algebraic quotient stack
$$
\PNCd \cong \Bigg[\frac{\mathbb{P} \big(\Sym^d (V^\vee) \big) \smallsetminus \Delta_{\mathrm{node}}}{\mathrm{PGL}_3} \Bigg]
$$
where $V$ is the standard $\mathrm{GL}_3$-representation, and $\Delta_{\mathrm{node}}$ is the locus of (projective) polynomials defining a plane curve with at least one singularity that is worse than nodal. Lifting it to the affine space, we obtain $\tDeltan\subseteq\Sym^d (V^\vee)$. The natural map 
\begin{equation}\label{eqn: mu-d gerb}
\varphi_d:  \Bigg[ \frac{\Sym^d (V^\vee) \smallsetminus \tDeltan}{\mathrm{GL}_3} \Bigg]  \to \Bigg[\frac{\mathbb{P} \big(\Sym^d (V^\vee) \big) \smallsetminus \Delta_{\mathrm{node}}}{\mathrm{PGL}_3} \Bigg]
\end{equation}
induces an isomorphism on the rational Chow groups (see \S\ref{sec: reduction}). % Here $\tDeltan$ is the lift of $\Delta_{\text{node}}$ to $\Sym^d (V^\vee)$. 
Finally, we compute the rational Chow ring of the source of $\varphi_d$ via the excision sequence
$$
\CH^*_{\mathrm{GL}_3}(\tDeltan) \to \CH^*_{\mathrm{GL}_3}(\Sym^d (V^\vee)) \to  \CH^*_{\mathrm{GL}_3}(\Sym^d (V^\vee) \smallsetminus \tDeltan) \to 0
$$
where the first term along with its morphism to the second is computed by looking closely into the geometry of $\tDeltan$, a crucial input of this paper.

Namely, we consider the natural $\mathrm{GL}_3$-equivariant proper and surjective projection 
\begin{equation} %\label{eqn: natural surjection}
\tXn= \{(\ell,F) \in \mathbb{P}^2 \times \Sym^d(V^\vee)| \ell \in V(F)^{\mathrm{ >node}} \} \to \tDeltan
\end{equation}
which induces a surjection on the rational Chow groups
$$
\CH_{\mathrm{GL}_3}^*(\tXn) \to \CH_{\mathrm{GL}_3}^*(\tDeltan).
$$
Here for any $F \in \Sym^d(V^\vee)$, the space $V(F)^{\mathrm{ >node}}\subset \mathbb{P}^2$ denotes the set of singular points of $V(F)$ where the singularity is worse than a nodal singularity. Even though $\tXn$ is singular, we give in Proposition \ref{Key proposition} a geometric description of $\tXn$ which allows for computing (generators of) its Chow groups in a sufficiently simple way.

We will need to compute the equivariant Chow groups of Brauer-Severi varieties in this process. Expecting it to be of independent interest, we prove a separate theorem under rather general settings, which computes for $G$ a linear algebraic group the $G$-equivariant Chow groups of $G$-equivariant Brauer-Severi varieties admitting $G$-linearized relative ample line bundles (called $G$-projective). Our theorem generalizes Vial's results \cite{Vial} to $G$-equivariant settings. This is given in  \S\ref{sec: Equivariant Chow groups of Brauer-Severi varieties}. 

In the end, we study in \S\ref{sec: tautological classes} how the tautological classes on $\mathfrak{M}_g$ are related to the generators in Theorems \ref{thm1} and \ref{thm2}. The main theorem is as follows.
\begin{theorem}
For the morphism $\epsilon_d: \PNCd \to \mathfrak{M}_g$, we have
\begin{align*}
\label{eq:delta}
\epsilon_d^*(\delta) \ & = -d(d-1)^2 c_1.\\
\epsilon_d^*(\lambda_1) & =
\begin{cases}
    -{{d}\choose{3}}c_1 & d\geq 3,\\
    0 & d<3.\\
\end{cases}\\
\epsilon_d^*(\lambda_2) & =
\begin{cases}
\frac{1}{2}{{d}\choose{3}}^2c_1^2 & d\geq 4,\\
0 & d<4.
\end{cases}\\
\epsilon_d^*(\lambda_3) & =
\begin{cases}
-\frac{d^2(d-1)(d-2)(5d^8 - 60d^7 + 305d^6 - 855d^5 + 1430d^4 - 1425d^3 + 816d^2 - 288d + 216)}{6480 (d-3) (d^2-3d+3)} c_1^3 & d\geq 4,\\
0 & d<4.
\end{cases}\\
\end{align*}
Here $\delta$ is the boundary divisor class in $\mathfrak{M}_{g}$ given by nodal curves; $\lambda_i=c_i(\mathbb{E})\ (i=1,2,3)$ with $\mathbb{E}\rightarrow \mathfrak{M}_{g}$ the Hodge bundle.

\end{theorem}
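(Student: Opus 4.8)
The plan is to do everything on the quotient presentation $[U/\mathrm{GL}_3]$ with $U=\Sym^d(V^\vee)\smallsetminus\tDeltan$, using the identifications of \S\ref{sec: algebraic presentation}--\ref{sec: reduction} to read off $\CH^*(\PNCd)=\CH^*_{\mathrm{GL}_3}(U)$ as a quotient of $\CH^*(\mathrm{BGL}_3)=\Q[c_1,c_2,c_3]$. Set $Y:=\mathbb{P}(V)\times U$, let $q\colon Y\to U$ be the projection and $\mathcal{C}\subseteq Y$ the universal curve, with $\pi:=q|_{\mathcal{C}}$, and write $h:=c_1(\mathcal{O}_{\mathbb{P}(V)}(1))$. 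First I would record two equivariant facts on $Y$: the tautological equation cutting out $\mathcal{C}$ is a $\mathrm{GL}_3$-equivariant section of $\mathcal{O}_{\mathbb{P}(V)}(d)$ \emph{with no character twist}, so $[\mathcal{C}]=dh$; and $\omega_{\mathbb{P}(V)}=\mathcal{O}_{\mathbb{P}(V)}(-3)\otimes\det(V^\vee)$, so by adjunction $\omega_{\mathcal{C}/U}=\bigl(\mathcal{O}_{\mathbb{P}(V)}(d-3)\otimes\det(V^\vee)\bigr)|_{\mathcal{C}}$ and $\psi:=c_1(\omega_{\mathcal{C}/U})=\bigl((d-3)h-c_1\bigr)|_{\mathcal{C}}$.

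Next I would identify the Hodge bundle. With $L:=\mathcal{O}_{\mathbb{P}(V)}(d-3)\otimes\det(V^\vee)$ on $Y$ one has $\omega_{\mathcal{C}/U}=L|_{\mathcal{C}}$ and $L(-\mathcal{C})=\omega_{\mathbb{P}(V)/U}$; pushing the ideal-sheaf sequence $0\to L(-\mathcal{C})\to L\to L|_{\mathcal{C}}\to 0$ forward along $q$, and using $R^\bullet q_*\mathcal{O}_{\mathbb{P}(V)}(d-3)=\Sym^{d-3}(V^\vee)$ concentrated in degree $0$ for $d\ge 3$ (zero for $d\le 2$), $R^\bullet q_*\omega_{\mathbb{P}(V)/U}=\mathcal{O}_U$ concentrated in degree $2$, and $R^1\pi_*\omega_{\mathcal{C}/U}=\mathcal{O}_U$ by relative duality, the long exact sequence yields $\epsilon_d^*\mathbb{E}=\pi_*\omega_{\mathcal{C}/U}\cong\Sym^{d-3}(V^\vee)\otimes\det(V^\vee)$ as $\mathrm{GL}_3$-representations for $d\ge 3$, while $\mathbb{E}=0$ (with $g=0$) for $d\le 2$; its Chern roots are the weights $p b_1+q b_2+r b_3$ with $p,q,r\ge 1$, $p+q+r=d$ (the ``interior monomials'', in agreement with the classical adjoint description of $H^0(C,\omega_C)$ for a smooth plane curve $C$). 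From $\epsilon_d^*\lambda_i=c_i(\epsilon_d^*\mathbb{E})$ a splitting-principle computation gives $c_1(\mathbb{E})=-\binom{d}{3} c_1$ immediately and expresses $c_2(\mathbb{E}),c_3(\mathbb{E})$ as explicit polynomials in $c_1,c_2,c_3$; I would then substitute the presentation of Theorem \ref{thm1} (for $d=3$, $\mathbb{E}$ is a line bundle, so $\lambda_2=\lambda_3=0$; for $d\ge 4$, the relations $c_2=\tfrac{d-1}{d-3}c_1^2$, the analogous cubic relation for $c_3$, and $c_1^4=0$ from \S\ref{sec: proof of thm1}) to obtain $\epsilon_d^*\lambda_2=\tfrac{1}{2}\binom{d}{3}^2 c_1^2$ and the displayed formula for $\epsilon_d^*\lambda_3$.

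For $\epsilon_d^*\delta$ I would argue that it is the $\mathrm{GL}_3$-equivariant class (restricted to $U$) of the discriminant hypersurface $\tDeltas\subseteq\Sym^d(V^\vee)$ — $\tDeltas$ is reduced, and $\epsilon_d$ is transverse to $\delta$ along its generic point, a $1$-nodal curve. The discriminant is a relative $\mathrm{GL}_3$-invariant of degree $3(d-1)^2$ in the coefficients of $F$; since each coefficient has central weight $-d$, it transforms by the character $\det^{-d(d-1)^2}$, hence $[\tDeltas]_{\mathrm{GL}_3}=-d(d-1)^2 c_1$. As a cross-check (and an alternative route avoiding the discriminant) I would invoke Mumford's relation $\delta=12\lambda_1-\kappa_1$ on $\mathfrak{M}_g$: one computes $\epsilon_d^*\kappa_1=\pi_*(\psi^2)=q_*\bigl(dh\,((d-3)h-c_1)^2\bigr)=-d(d-1)(d-3)c_1$ using $q_*h^2=1$, $q_*h=q_*1=0$ and the relation $h^3=-c_1h^2-c_2h-c_3$ in $\CH^*(\mathbb{P}(V))$, and combining with $\epsilon_d^*\lambda_1=-\binom{d}{3} c_1$ recovers $-d(d-1)^2 c_1$.

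The part I expect to be genuinely laborious is $\epsilon_d^*\lambda_3$: expanding $c_3(\mathbb{E})=e_3$ of the Chern roots $p b_1+q b_2+r b_3$ into $c_1,c_2,c_3$ needs the weighted power sums $\sum p^3$, $\sum p^2q$, $\sum pqr$ over the relevant triples, after which one substitutes the degree-$2$ and degree-$3$ relations of $\CH^*(\PNCd)$ from \S\ref{sec: proof of thm1}; the factors $(d-3)$ and $(d^2-3d+3)$ in the denominator come exactly from those relations, and this is where the degree-$10$ numerator is generated. By contrast the identification of $\mathbb{E}$, the value of $\psi$, and the $\delta$-, $\lambda_1$- and $\lambda_2$-formulas are short and conceptual.
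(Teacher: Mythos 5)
Your proposal is correct, and for the $\lambda$-classes it is essentially the paper's own argument: the identification $\pi_*\omega_{\mathcal{C}/U}\cong\Sym^{d-3}(V^\vee)\otimes\det(V^\vee)$ via pushing forward the twisted ideal-sheaf sequence and the vanishing of $H^0,H^1(\mathbb{P}^2,\mathcal{O}(-3))$ is exactly Lemma~\ref{lem:hodge}, and the splitting-principle expansion over the ``interior'' weights $p+q+r=d$, $p,q,r\geq 1$, followed by substitution of the relations \eqref{eq: c2 in terms of c1}--\eqref{eq: c3 in terms of c1}, is exactly how the paper gets $\lambda_1$ and $\lambda_3$ (the paper determines the degree-$3$ coefficients by numerical interpolation rather than by closed-form weighted power sums, but that is the same computation; your symbolic route is if anything cleaner). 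For $\lambda_2$ the paper takes a shortcut you don't: it invokes Mumford's relation $c(\mathbb{E})c(\mathbb{E}^\vee)=1$ to get $\lambda_1^2=2\lambda_2$ for $d\geq 4$, whereas you expand $e_2$ directly and substitute \eqref{eq: c2 in terms of c1}; both work. The genuine divergence is $\epsilon_d^*(\delta)$: the paper computes the degeneracy locus of $d\pi_2$ in local coordinates, identifies it with $Z(S_1)$, and reads off the pushforward from Lemma~\ref{lemma: excission 1}, i.e.\ $r(1)=-d(d-1)^2c_1$; you instead identify $\epsilon_d^*(\delta)$ with the class of the discriminant hypersurface and compute it from the classical degree $3(d-1)^2$ together with the central character $\det^{-d(d-1)^2}$. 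That route is shorter but imports three classical facts you only assert (the degree of the discriminant, its reducedness/irreducibility, and multiplicity one of the pullback along a generic $1$-nodal curve); the paper's local matrix computation establishes the last two from scratch, and the degree comes out of the Euler-class computation in Lemma~\ref{lemma: excission 1} rather than being quoted. Your cross-check via $12\lambda_1=\kappa_1+\delta$ with $\epsilon_d^*\kappa_1=q_*\bigl(dh\,((d-3)h-c_1)^2\bigr)=-d(d-1)(d-3)c_1$ is correct and, granting GRR on $\mathfrak{M}_g$, is itself a complete independent proof of the $\delta$-formula, so the gap in the primary route is harmless.
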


\section{Acknowledgements} 

This project began as a group project at the AGNES 2023 summer school at Brown University. We thank the organizers Dan Abramovich, Melody Chan, Eric Larson, and Isabel Vogt for creating the productive research environment that led to this paper and the NSF grant DMS-2312088 for funding the conference. A special thank goes to Juliette Bruce and Hannah Larson who suggested this project, helped with discussions around the topic, and provided many useful comments. We also thank Megan Chang-Lee, Neelarnab Raha and Sara Stephens who participated at early stages of the project. Finally, we thank the referee for their careful reading of the paper and for providing several useful comments.
 A.C. received support from SNF-200020-182181 and SNF-P500PT-222363. X.Y. received support from ERC Consolidator Grant ROGW-864919. A.A received support from the ERC under the European Union’s Horizon 2020 research and innovation programme (grant agreement No. 756444).

\section{Preliminaries about $\PNCd$ and $\PSCd$}
In this section, we prepare ourselves by identifying the rational Chow rings of our moduli stacks $\PSCd$ and $\PNCd$ with those of certain quotients of open subschemes of affine spaces.
\subsection{Description as quotient stacks} \label{sec: algebraic presentation}

The next proposition provides a presentation for $\PSCd$ and $\PNCd$. Recall that we denote by $V$ the standard representation of $\mathrm{GL}_3$, so $\Sym^d (V^\vee)$ is the affine space of degree-$d$ homogeneous polynomials on $V = \mathbb{C}^3$, equipped with the natural $\mathrm{GL}_3$-action.

\begin{proposition} \label{prop:modulifunc}
\ \\
\begin{enumerate}[label=\alph*)] \vspace{-10pt}
  \item[(i)] $\PSCd$ is naturally isomorphic to the quotient stack
    $$
    \Bigg[\frac{\mathbb{P} \big(\Sym^d (V^\vee) \big) \smallsetminus \Delta_{\mathrm{sm}}}{\mathrm{PGL}_3} \Bigg]
    $$
    where $\Delta_{\mathrm{sm}}$ is the (projective) locus of polynomials defining singular curves on $\mathbb{P}^2$.
  \item[(ii)] $\PNCd$ is naturally isomorphic to the quotient stack
    $$
    \Bigg[\frac{\mathbb{P} \big(\Sym^d (V^\vee) \big) \smallsetminus \Delta_{\mathrm{node}}}{\mathrm{PGL}_3} \Bigg]
    $$
    where $\Delta_{\mathrm{node}}$ is the (projective) locus of polynomials defining curves on $\mathbb{P}^2$ with at least one worse-than-nodal singularity.
\end{enumerate}
\end{proposition}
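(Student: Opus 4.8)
The plan is to exhibit, for each $S \in \mathrm{Sch}_{\text{ét}}$, a natural equivalence between the groupoid of families $(C \hookrightarrow \P \to S)$ as in the definition of $\PNCd$ (resp.\ $\PSCd$) and the groupoid of maps $S \to [\,\mathbb{P}(\Sym^d(V^\vee)) \smallsetminus \Deltan\,/\,\mathrm{PGL}_3\,]$ (resp.\ with $\Deltas$). Recall that such a map into the quotient stack is, by definition of the quotient stack, the data of a $\mathrm{PGL}_3$-torsor $T \to S$ together with a $\mathrm{PGL}_3$-equivariant map $T \to \mathbb{P}(\Sym^d(V^\vee)) \smallsetminus \Deltan$. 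I will treat case (ii); case (i) is identical with ``worse-than-nodal singularity'' replaced by ``singularity''.

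The key steps, in order, are as follows. First, given a family $(C \hookrightarrow \P \to S)$, the Brauer--Severi scheme $\P \to S$ of relative dimension $2$ corresponds to a $\mathrm{PGL}_3$-torsor $T \to S$; concretely $T = \mathrm{Isom}_S(\P, \mathbb{P}^2_S)$ (the scheme of isomorphisms of $S$-schemes $\P \to \mathbb{P}^2_S$, which is a torsor under $\underline{\mathrm{Aut}}(\mathbb{P}^2) = \mathrm{PGL}_3$), and $\P = T \times^{\mathrm{PGL}_3} \mathbb{P}^2$. Second, I must produce from the closed embedding $C \hookrightarrow \P$ with fibers of degree $d$ a $\mathrm{PGL}_3$-equivariant map $T \to \mathbb{P}(\Sym^d(V^\vee))$. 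For this, pull back $C \hookrightarrow \P$ along $T \to S$ to get $C_T \hookrightarrow \mathbb{P}^2_T$; this is a family of degree-$d$ plane curves over $T$, hence a relative effective Cartier divisor, and is cut out by a line subbundle $\mathcal{L} \hookrightarrow \Sym^d(\mathcal{O}(1)^{\oplus 3})^\vee \otimes (\text{stuff})$ — more precisely, $C_T$ corresponds to an $\mathcal{O}_T$-line subbundle of $\Sym^d(V^\vee) \otimes \mathcal{O}_T$ after trivializing appropriately, equivalently a morphism $T \to \mathbb{P}(\Sym^d(V^\vee))$; the key point is that $H^0(\mathbb{P}^2, \mathcal{O}(d)) = \Sym^d(V^\vee)$ canonically as $\mathrm{GL}_3$- (hence projectively as $\mathrm{PGL}_3$-) representations, and $C_T$ being flat with fibers of degree $d$ implies $p_{T*}\mathcal{O}(d)(-C_T)$ — or rather the ideal — is a line bundle whose inclusion gives the classifying map. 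One checks this map is $\mathrm{PGL}_3$-equivariant because the whole construction $\P \rightsquigarrow T$ and $C \rightsquigarrow C_T$ is functorial in $S$ and compatible with the torsor structure. Third, the fiberwise condition that $C_s$ has at worst nodal singularities translates exactly into the image of $T \to \mathbb{P}(\Sym^d(V^\vee))$ landing in the complement of $\Deltan$: indeed $\Deltan$ is by construction the locus where the curve has a worse-than-nodal singularity, and having worse-than-nodal singularities is a condition on geometric fibers invariant under $\mathrm{PGL}_3$, so it descends. Fourth, I construct the inverse: given $(T \to S,\ f\colon T \to \mathbb{P}(\Sym^d(V^\vee)) \smallsetminus \Deltan)$ with $f$ equivariant, set $\P = T \times^{\mathrm{PGL}_3} \mathbb{P}^2$, and let $C_T \subseteq \mathbb{P}^2_T$ be the universal degree-$d$ curve pulled back along $f$; equivariance lets this descend to a closed subscheme $C \subseteq \P$, flat and proper over $S$ with connected fibers (connectedness of a plane curve of degree $d \geq 1$ is automatic since $\mathcal{O}(d)$ is very ample / by the exact sequence $0 \to \mathcal{O}(-d) \to \mathcal{O} \to \mathcal{O}_C \to 0$ and $H^1(\mathbb{P}^2,\mathcal{O}(-d)) = 0$), and with at worst nodal geometric fibers by the complementary translation of step three. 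Fifth, I verify these two constructions are mutually quasi-inverse and natural in $S$, and that they respect morphisms (the cartesian-square condition on morphisms of families corresponds precisely to pullback of torsors and of equivariant maps), giving an isomorphism of stacks.

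The main obstacle I anticipate is step two: making precise and natural the passage from the closed embedding $C \hookrightarrow \P$ to the classifying morphism $T \to \mathbb{P}(\Sym^d(V^\vee))$, including checking that the relevant pushforward sheaf is a \emph{line} bundle (so that one genuinely gets a point of the projective space, not just of a Grassmannian or Quot scheme) and that its formation commutes with base change. This hinges on the cohomology-and-base-change computation that for a degree-$d$ plane curve $C_s \subseteq \mathbb{P}^2$ one has $h^0(\mathbb{P}^2, \mathcal{I}_{C_s}(d)) = 1$ and $h^i = 0$ for $i > 0$ uniformly in $s$ — i.e.\ that a plane curve of degree $d$ is cut out by a \emph{unique} (up to scalar) degree-$d$ form, which uses that $C_s$ is a hypersurface (a Cartier divisor), hence its ideal sheaf is $\mathcal{O}(-d)$, and $H^0(\mathcal{O}) = \mathbb{C}$ while $H^0(\mathcal{O}(-d)) = 0 = H^1(\mathcal{O}(-d))$ for $d \geq 1$. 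Once this is in hand, Grothendieck's theorem on cohomology and base change makes the construction work in families and the equivariance and naturality are then formal. A secondary, more bookkeeping-heavy point is checking that ``worse-than-nodal singularity'' (resp.\ ``singular'') is a constructible, $\mathrm{PGL}_3$-stable locus on $\mathbb{P}(\Sym^d(V^\vee))$ so that removing it is meaningful and the fiberwise conditions in the definition of $\PNCd$ and $\PSCd$ match it exactly on geometric points; this is where one invokes that nodality is étale-local and invariant under the projective linear group.
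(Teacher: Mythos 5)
Your proposal is correct and follows essentially the same route as the paper: the paper exhibits the map $\mathbb{P}(\Sym^d(V^\vee)) \smallsetminus \Deltan \to \PNCd$, $F \mapsto V(F)$, and shows it is a $\mathrm{PGL}_3$-torsor using the \'etale-local triviality of Brauer--Severi schemes, which carries the same content as your explicit groupoid equivalence via $T = \mathrm{Isom}_S(\P, \mathbb{P}^2_S)$. The cohomology-and-base-change point you isolate (that a flat family of degree-$d$ plane curves is cut out by a unique line subbundle of $\Sym^d(V^\vee)\otimes\mathcal{O}_S$, via $h^0(\mathcal{I}_{C_s}(d))=1$ and vanishing in higher degrees) is precisely what the paper leaves implicit when asserting that this map is a torsor.
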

  \begin{proof}
      We prove $(ii)$, the proof of $(i)$ is similar. We have a morphism
      $$
        \mathbb{P} \big(\Sym^d (V^\vee) \big) \smallsetminus \Delta_{\mathrm{node}} \to \PNCd
      $$
      given by associating to $[F: S \to  \mathbb{P} \big(\Sym^d (V^\vee) \big) \smallsetminus \Delta_{\mathrm{node}}]$ the nodal curve $C=V(F) \subseteq \mathbb{P}^2_S$ over $S$, where $V(F)$ denotes the zero locus of $F$. For any object $(C \to \P \to S)$ in $\PNCd$, we can find an étale covering $S' \to S$ such that there exists an isomorphism on $S'$ between the pullback to $S'$ of $(C \to \P \to S)$ and an object of the form $ (C' \hookrightarrow \mathbb{P}^2_{S'} \to S')$. Any two of such isomorphisms differ by an element of $\mathrm{PGL}_3(S')$, so the morphism at the beginning of this proof is a $\mathrm{PGL}_3$-torsor. The proposition follows from here.
           
\end{proof}

\subsection{The stacks $\tPSCd$ and $\tPNCd$}\label{sec: reduction}

In order to compute the Chow rings of $\PSCd$ and $\PNCd$, we introduce the variants
$$
\tPSCd=
    \Bigg[ \frac{\Sym^d (V^\vee) \smallsetminus \tDeltas}{\mathrm{GL}_3} \Bigg] \quad \mathrm{and} \quad
\tPNCd=
    \Bigg[ \frac{\Sym^d (V^\vee) \smallsetminus \tDeltan}{\mathrm{GL}_3} \Bigg]
$$
where again $\tDeltas$ contains degree-$d$ polynomials defining singular curves, and $\tDeltan$ those defining curves with worse-than-nodal singularities. They are respectively liftings of $\Delta_{\mathrm{sm}}$ and $\Delta_{\mathrm{node}}$ from $\mathbb{P} \big(\Sym^d (V^\vee) \big)$.
\begin{lemma}\label{lemma: iso up to gerb}
    The natural morphisms
    $$
        \varphi_d: \ \tPNCd \to \PNCd \quad\text{ and }\quad \psi_d: \ \tPSCd\to\PSCd
    $$
    induce isomorphisms on the rational Chow rings
    \begin{align*}
        \varphi_d^*: & \quad \mathrm{CH}^*(\PNCd) \to \mathrm{CH}^*(\tPNCd), \\
        \psi_d^*: & \quad \mathrm{CH}^*(\PSCd) \to \mathrm{CH}^*(\tPSCd).
    \end{align*}
\end{lemma}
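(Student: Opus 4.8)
The morphisms $\varphi_d$ and $\psi_d$ are, up to the identification in Proposition~\ref{prop:modulifunc}, the natural maps from the $\mathrm{GL}_3$-quotient of the affine cone (minus the bad locus) to the $\mathrm{PGL}_3$-quotient of its projectivization. I would factor each of them through the intermediate stack
\[
    \Bigg[ \frac{\Sym^d (V^\vee) \smallsetminus \tDeltan}{\mathrm{GL}_3} \Bigg]
    \xrightarrow{\ a_d\ }
    \Bigg[ \frac{\mathbb{P}\big(\Sym^d (V^\vee)\big) \smallsetminus \Delta_{\mathrm{node}}}{\mathrm{GL}_3} \Bigg]
    \xrightarrow{\ b_d\ }
    \Bigg[ \frac{\mathbb{P}\big(\Sym^d (V^\vee)\big) \smallsetminus \Delta_{\mathrm{node}}}{\mathrm{PGL}_3} \Bigg]
\]
and prove that each induces an isomorphism on rational Chow rings separately (the same argument works verbatim for the smooth locus). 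For $a_d$: the complement $\Sym^d(V^\vee)\smallsetminus\tDeltan$ carries a free $\mathbb G_m$-action by scaling (the origin is excluded since $0\in\tDeltan$ for $d\ge 1$), with quotient $\mathbb P(\Sym^d(V^\vee))\smallsetminus\Delta_{\mathrm{node}}$; this scaling commutes with the $\mathrm{GL}_3$-action and is compatible with the central $\mathbb G_m\subset\mathrm{GL}_3$, so $a_d$ exhibits the source as the total space of a $\mathbb G_m$-torsor (equivalently, the complement of the zero section of a line bundle $\mathcal L$) over the target. By homotopy invariance of Chow groups, pullback to the line bundle $\mathcal L$ is an isomorphism, and the excision/Gysin sequence for the zero section identifies $\CH^*$ of the $\mathbb G_m$-torsor with $\CH^*$ of the base modulo the ideal generated by $c_1(\mathcal L)$. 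So it remains to check that $c_1(\mathcal L)=0$ in $\CH^*$ of the target; this holds rationally because $\mathcal L$ is $\mathrm{GL}_3$-linearized with the central torus acting by a nonzero weight, forcing $c_1(\mathcal L)$ to be a nonzero multiple of a class pulled back from $B\mathbb G_m$ that already dies in the $\mathrm{PGL}_3$-quotient picture—more cleanly, I would instead observe directly that after passing to $b_d$ the combined map $\varphi_d$ is a gerbe and handle the $c_1$ vanishing there.

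For $b_d$: the map $b_d$ is a $\mu_3$-gerbe, since $\mathrm{GL}_3\to\mathrm{PGL}_3$ is a central extension by $\mathbb G_m$ but the $\mathbb G_m$ already acts trivially on $\mathbb P(\Sym^d(V^\vee))$, so the stabilizer gerbe is banded by $\mathbb G_m/(\text{image})$—one must be slightly careful, but the upshot is that $b_d$ (or rather $\varphi_d$ as a whole) is a gerbe banded by a group of multiplicative type, and pushforward along a gerbe banded by such a group induces an isomorphism on rational Chow rings. The cleanest route is to cite the standard fact (e.g.\ from Edidin–Graham equivariant intersection theory, or Vistoli) that if $\mathcal X\to\mathcal Y$ is a gerbe banded by a group of multiplicative type, then $\CH^*(\mathcal Y)_{\mathbb Q}\xrightarrow{\sim}\CH^*(\mathcal X)_{\mathbb Q}$; equivalently, for quotient stacks $[X/\mathrm{GL}_n]\to[X/\mathrm{PGL}_n]$ with the center acting trivially on $X$, the pullback on rational Chow is an isomorphism. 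Concretely one writes $\CH^*_{\mathrm{GL}_3}(Y)=\CH^*_{\mathrm{PGL}_3}(Y)\otimes_{\CH^*(B\mathrm{PGL}_3)}\CH^*(B\mathrm{GL}_3)$ is \emph{not} quite right integrally, but rationally the projection formula plus the fact that the generic fiber of $B\mathrm{GL}_3\to B\mathrm{PGL}_3$ is $B\mathbb G_m$ (whose rational Chow ring in each degree, pushed forward, contributes only in degree $0$ after the gerbe collapse) gives the isomorphism.

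The key steps in order are therefore: (1) set up the two-step factorization and reduce to analyzing $a_d$ and $b_d$ separately; (2) for $a_d$, identify the source as the complement of the zero section of a line bundle on the target and run homotopy invariance plus the Gysin sequence, reducing to the vanishing of one first Chern class; (3) for $b_d$, identify it as a gerbe banded by a group of multiplicative type and invoke (or prove by hand, via the $B\mathbb G_m$-fibration and projection formula with $\mathbb Q$-coefficients) that such gerbes induce isomorphisms on rational Chow rings—this simultaneously furnishes the vanishing needed in step (2), since the problematic $c_1$ is a class pulled back from $B\mathbb G_m$ via the gerbe and thus killed.

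**Main obstacle.** The only genuine subtlety is step (3): making precise that $\varphi_d$ is a gerbe (the central $\mathbb G_m$ in $\mathrm{GL}_3$ acts on $\Sym^d(V^\vee)$ with weight $d\ne 0$, so the naive stabilizer is $\mu_d$, not all of $\mathbb G_m$—one must combine the scaling action with the $\mathrm{GL}_3$-action correctly) and invoking the correct form of ``gerbes banded by multiplicative groups are invisible to rational Chow.'' I expect this to be where the argument needs the most care; the homotopy-invariance part of step (2) is routine. In the write-up I would state the gerbe/Chow invariance as a cited lemma and spend the bulk of the proof checking that $\varphi_d$ (resp.\ $\psi_d$) genuinely is such a gerbe.
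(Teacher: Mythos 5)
Your fallback position is exactly the paper's proof: the paper simply observes that $\varphi_d$ is a $\mu_d$-gerbe (the central $\lambda\in\mathbb{G}_m\subseteq\mathrm{GL}_3$ scales $F\in\Sym^d(V^\vee)$ by $\lambda^{-d}$, so the kernel of $\mathrm{Stab}_{\mathrm{GL}_3}(F)\to\mathrm{Stab}_{\mathrm{PGL}_3}([F])$ is $\mu_d$ and the map of stabilizers is surjective after rescaling a lift by a $d$-th root), and gerbes banded by \emph{finite} groups are invisible to rational Chow groups. Your ``Main obstacle'' paragraph identifies this correctly, so the core of your argument agrees with the paper.

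However, the plan as announced --- factor $\varphi_d=b_d\circ a_d$ and ``prove that each induces an isomorphism on rational Chow rings separately'' --- cannot be executed, because neither step is individually an isomorphism. The map $b_d$ is a $\mathbb{G}_m$-gerbe, not a $\mu_3$-gerbe: its band is $\ker(\mathrm{GL}_3\to\mathrm{PGL}_3)=\mathbb{G}_m$, which acts trivially on $\mathbb{P}(\Sym^d(V^\vee))$, and $\mathbb{G}_m$-gerbes are \emph{not} invisible to rational Chow ($\CH^*(B\mathbb{G}_m)_\Q=\Q[t]\neq\Q$); the general statement you invoke, that gerbes banded by groups of multiplicative type induce isomorphisms on rational Chow, is false for $\mathbb{G}_m$ and holds only for finite (e.g.\ $\mu_n$) bands. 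Concretely, using the rational isogeny $\mathrm{GL}_3\sim\mathrm{PGL}_3\times\mathbb{G}_m$, the pullback $b_d^*$ adjoins a polynomial generator (essentially $c_1$), and $a_d^*$ is the quotient by the ideal generated by $\xi=c_1^{\mathrm{GL}_3}(\mathcal{O}(1))$, which is nonzero on the target of $a_d$ --- so your intermediate claim ``it remains to check that $c_1(\mathcal{L})=0$ in $\CH^*$ of the target'' is checking something false. What is true is that $\xi$ has central $\mathbb{G}_m$-weight $\pm d\neq 0$, so modding out by $(\xi)$ exactly cancels the generator adjoined by $b_d^*$, and only the \emph{composite} is an isomorphism. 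Either carry out that combined computation honestly, or (as the paper does, and as you ultimately suggest) drop the factorization and verify directly that $\varphi_d$ is a $\mu_d$-gerbe.
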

    \begin{proof}
        It is enough to observe that $\varphi_d$ is a $\mu_d$-gerbe. See \cite{Edidin-Gram} for the definition of Chow rings of Artin quotient stacks $[X/G]$.
    \end{proof}

\section{The Chow ring of $\PSCd$}\label{sec: proof of thm2}
In this section, we compute $\CH^*(\PSCd)$ and prove Theorem \ref{thm2}. Since it is isomorphic to $\CH^*(\tPSCd)$ by Lemma \ref{lemma: iso up to gerb}, we work mostly with the latter below. The proof here is not as complicated as the one in \S\ref{sec: Chow of PNCd}, but serves as a nice prototype to start with.

By definition (see \cite[Proposition 16]{Edidin-Gram}), we have 
\begin{equation}\label{eqn: def of Chow 0}
    \CH^*(\tPSCd)= \CH^*_{\mathrm{GL}_3}(\Sym^d (V^\vee) \smallsetminus \tDeltas),
\end{equation}
which in turn fits in the excision sequence
\begin{equation}\label{eqn: Excission 1}
\CH^*_{\mathrm{GL}_3}(\tDeltas) \to \CH^*_{\mathrm{GL}_3}(\Sym^d (V^\vee)) \to  \CH^*_{\mathrm{GL}_3}(\Sym^d (V^\vee) \smallsetminus \tDeltas) \to 0.
\end{equation}
By homotopy invariance,
\[
\CH^*_{\mathrm{GL}_3}(\Sym^d (V^\vee)) \cong \CH^*_{\mathrm{GL}_3}(\mathrm{Spec}(\mathbb{C})) = \CH^*(\mathrm{BGL}_3) = \Q[c_1,c_2,c_3],
\]
with $c_i$ being the $i$-th Chern class of the tautological rank-$3$ bundle over $\mathrm{BGL}_3$.

We consider the incidence variety
\[
    \begin{tikzcd}
        & \widetilde{X}_{\mathrm{sm}}= \{(\ell,F)| \ell \in V(F)^{\mathrm{sing}} \} \subseteq \mathbb{P}^2 \times \Sym^d(V^\vee) \arrow{dl}{p_1} \arrow{dr}{q_1}&    \\
        \mathbb{P}^2   &  & \Sym^d (V^\vee) &  
    \end{tikzcd}
\]
where $p_1$ and $q_1$ are respectively the restrictions of the two projections below.
\[
    \begin{tikzcd}
        & \mathbb{P}^2 \times \Sym^d(V^\vee) \arrow{dl}{P} \arrow{dr}{Q}&    \\
        \mathbb{P}^2   &  & \Sym^d (V^\vee) &  
    \end{tikzcd}
\]
All maps that appear are $\mathrm{GL}_3$-equivariant. 

It is not hard to see that $q_1$ factors as the composition of the surjection $\widetilde{q}_1: \tXs \rightarrow \tDeltas$ and the closed immersion $i_1: \tDeltas \rightarrow \Sym^d(V^\vee)$, and thus $(q_1)_*$ factors as the composition of the two push-forwards
\[
\quad \CH^*_{\mathrm{GL}_3}(\tXs) \twoheadrightarrow \CH^*_{\mathrm{GL}_3}(\tDeltas) \rightarrow \CH^*_{\mathrm{GL}_3}(\Sym^d(V^\vee)).
\]
Here the first map $(\widetilde{q}_1)_*$ is surjective because we work with rational coefficients; the second map $(i_1)_*$ is exactly what appears in the excision sequence (\ref{eqn: Excission 1}). In order to compute $\CH^*_{\mathrm{GL}_3}(\Sym^d (V^\vee) \smallsetminus \tDeltas)$ by (\ref{eqn: Excission 1}), it suffices to understand the image of $(i_1)_*$, or equivalently the image of $(q_1)_*$.

Moreover, $p_1:\tXs\rightarrow\mathbb{P}^2$ is a vector bundle, as the subspace of degree-$d$ polynomials singular at any given point $\ell\in\mathbb{P}^2$ is determined exactly by the linear constraint $dF(\ell) = 0$ for $F\in\Sym^d (V^\vee)$. Hence, the pullback along $p_1$ induces an isomorphism
$$
p_1^*:\quad \CH^*_{\mathrm{GL}_3}(\mathbb{P}^2)  \xrightarrow{\sim} \CH^*_{\mathrm{GL}_3}(\tXs).
$$
By the projective bundle formula \cite[Lemma 2.3]{edidin_fulghesu_2009}, we have
$$
\CH^*_{\mathrm{GL}_3}(\mathbb{P}^2)= \frac{\Q[c_1,c_2,c_3,h]}{(h^3+c_1 h^2 +c_2 h + c_3)}
$$
where $c_i\ (i=1,2,3)$ are pulled back from $\mathrm{BGL}_3$, and $h=c_1^{\mathrm{GL}_3}(\mathcal{O}(1))$ for $\mathcal{O}(1)$ endowed with the natural $\mathrm{GL}_3$-action. Note that under such notation, all $\mathrm{GL}_3$-equivariant Chow groups that emerge in our consideration are modules of 
\[
\CH^*_{\mathrm{GL}_3}(\mathrm{Spec}(\C)) = \Q[c_1,c_2,c_3],
\]
and morphisms are $\Q[c_1,c_2,c_3]$-module homomorphisms.
\begin{lemma}\label{lemma: excission 1}
    The map
    $$
    r: \quad \CH^*_{\mathrm{GL}_3}(\mathbb{P}^2) \xrightarrow{\sim} \CH^*_{\mathrm{GL}_3}(\tXs) \twoheadrightarrow \CH^*_{\mathrm{GL}_3}(\tDeltas) \to \CH^*_{\mathrm{GL}_3}(\Sym^d (V^\vee))
    $$
    defined by composing $p_1^*$ and $(q_1)_* = (i_1)_* \circ (\widetilde{q}_1)_*$, satisfies
    \begin{align*}
        r(1) & = -d(d-1)^2 \cdot c_1, \\
        r(h) & = -d(d-1)(d-2) \cdot c_2 + d(d-1)^2 \cdot c_1^2, \\
        r(h^2) & = -d(d^2-3d+3) \cdot c_3 + d(d-1)(2d-3) \cdot c_1c_2 - d(d-1)^2 \cdot c_1^3 .
    \end{align*}
\end{lemma}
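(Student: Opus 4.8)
The plan is to compute the three pushforwards $r(1), r(h), r(h^2)$ directly via the Gysin pushforward along the projection $q_1 : \widetilde X_{\mathrm{sm}} \to \Sym^d(V^\vee)$, factored through the inclusion $\widetilde\Delta_{\mathrm{sm}} \hookrightarrow \Sym^d(V^\vee)$. Since $q_1 = Q|_{\widetilde X_{\mathrm{sm}}}$ is the restriction of the smooth projective projection $Q : \mathbb P^2 \times \Sym^d(V^\vee) \to \Sym^d(V^\vee)$, the key observation is that $\widetilde X_{\mathrm{sm}}$ is a $\mathrm{GL}_3$-equivariant subbundle of $\mathbb P^2 \times \Sym^d(V^\vee)$ over $\mathbb P^2$: a point $\ell \in \mathbb P^2$ together with the linear conditions "$F$ vanishes to order $\geq 2$ at $\ell$", i.e. $F(\ell) = 0$ and $\nabla F(\ell) = 0$, cuts out a subbundle $E \subseteq \mathbb P^2 \times \Sym^d(V^\vee)$ of corank $3$ (for $d$ large enough; small $d$ handled separately). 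First I would identify this rank-drop bundle $E = \widetilde X_{\mathrm{sm}}$ explicitly as the kernel of the evaluation-of-2-jet map $P^*\Sym^d(V^\vee) \to \mathcal J$, where $\mathcal J$ is the rank-$3$ bundle of $2$-jets at $\ell$; concretely $\mathcal J$ fits in an exact sequence $0 \to \mathcal O(-d) \to \mathcal J \to \mathcal O(-d)\otimes V^\vee \to 0$ or can be described via the Euler sequence, with total Chern class expressible in $h = c_1^{\mathrm{GL}_3}(\mathcal O(1))$ and the $c_i$.

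Then I would use the self-intersection/excess formula: for the closed immersion $j : \widetilde X_{\mathrm{sm}} \hookrightarrow \mathbb P^2 \times \Sym^d(V^\vee)$ of a subbundle of a vector bundle over $\mathbb P^2$, one has $j_* j^* (\alpha) = \alpha \cdot c_{\mathrm{top}}(\mathcal Q)$ where $\mathcal Q$ is the quotient bundle (the normal bundle of $\widetilde X_{\mathrm{sm}}$ in the ambient, which here is $P^*\mathcal J$ pulled back suitably). Combining this with $q_1 = Q \circ j$ and the fact that $Q_*$ is integration over the $\mathbb P^2$-fiber (picking out the coefficient of $h^2$ after multiplying out, using the relation $h^3 = -c_1 h^2 - c_2 h - c_3$), I get
$$
r(h^k) = (q_1)_*\, p_1^*(h^k) = Q_*\big( h^k \cdot c_3(\mathcal J) \big)
$$
up to identifying $p_1^* h$ with $P^* h$ restricted. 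So the whole computation reduces to: (1) write down $c_3(\mathcal J) = h^3 + a_1 h^2 + a_2 h + a_3$ with $a_i$ polynomials in $d$ and the $c_i$, coming from the jet bundle structure; (2) multiply by $h^k$; (3) reduce modulo $h^3 + c_1 h^2 + c_2 h + c_3$ to extract the coefficient of $h^2$, which is $Q_*$ of the class. The numerics $-d(d-1)^2$, etc., should then fall out.

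The main obstacle I expect is pinning down the jet bundle $\mathcal J$ — i.e., the precise $\mathrm{GL}_3$-equivariant structure and Chern classes of the bundle whose fiber at $\ell$ is the space of $2$-jets (value and first derivatives) of degree-$d$ forms at $\ell$ — and doing this uniformly enough that the answer is visibly a polynomial identity in $d$ valid for all $d \geq 1$ (so that the low-degree cases $d = 1, 2$ where $\widetilde X_{\mathrm{sm}}$ may be empty or of the "wrong" codimension are covered by the same formula, the pushforward simply vanishing). The cleanest route is probably the twisted Euler sequence on $\mathbb P^2 = \mathbb P(V)$: derivatives of a degree-$d$ form along $\ell$ are governed by $\mathcal O(-1) \hookrightarrow V \otimes \mathcal O$ and its dual, so $\mathcal J$ is built from $\mathcal O(-d)$, $\mathcal O(1-d)$-type twists of $Q_{\mathbb P^2}$ (the tautological quotient), and a careful bookkeeping gives $c(\mathcal J)$; alternatively one can compute $r$ by the localization-style argument of choosing a torus and summing over fixed flags, but the subbundle approach is more transparent. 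Once $c_3(\mathcal J)$ is in hand, steps (2)–(3) are routine polynomial algebra modulo the projective-bundle relation.
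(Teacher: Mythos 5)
Your proposal is correct and follows essentially the same route as the paper: $\tXs$ is the zero locus of the tautological section of (the pullback of) the rank-$3$ bundle of first-order principal parts $\mathcal{P}^1(\mathcal{O}(d))$, so $(j_1)_*[\tXs]$ is its equivariant top Chern class and $r(h^k)=Q_*\bigl(P^*h^k\cdot c_3^{\mathrm{GL}_3}(P^*\mathcal{P}^1(\mathcal{O}(d)))\bigr)$ is extracted as the $h^2$-coefficient modulo $h^3+c_1h^2+c_2h+c_3$, exactly as in your steps (2)--(3). The only detail to repair is your identification of the jet bundle (your proposed sequence has rank $4$ and the wrong twists, and it is a first-order, not second-order, jet): the paper uses the $\mathrm{GL}_3$-equivariant isomorphism $\mathcal{P}^1(\mathcal{O}(d))\cong\mathcal{O}(d-1)\otimes V^\vee$, given by $F\mapsto\sum_i\frac{\partial F}{\partial z_i}(\ell)\otimes z_i$, whence $c_3^{\mathrm{GL}_3}=(d-1)^3h^3-(d-1)^2c_1h^2+(d-1)c_2h-c_3$ and the stated formulae follow.
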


\begin{proof}
We start from computing $r(1) = (q_1)_*([\tXs])$. Recall that by definition of $q_1: \tXs \rightarrow \Sym^d(V^\vee)$ as the restriction of $Q$ to $\tXs$, it admits a factorization
\[
\tXs \xhookrightarrow{j_1} \mathbb{P}^2\times \Sym^d(V^\vee) \xrightarrow{Q} \Sym^d{V^\vee}
\]
(see Diagram \eqref{eqn: expanded diagram} for a dictionary of all maps that have appeared so far).
% Another way to see this is that the SES of the bundles of principal parts 
% \[
% 0 \rightarrow \mathcal{K}_1 = \Omega_{\mathbb{P}^2}\otimes \mathcal{O}(d) \rightarrow \mathcal{P}^1(\mathcal{O}(d)) \rightarrow \mathcal{P}^0(\mathcal{O}(d)) = \mathcal{O}(d) \rightarrow 0
% \]
% is exactly the Euler sequence tensored with $\mathcal{O}(d)$ in this case.
Moreover, $\tXs = (S_1)^{-1}(0) \subset \mathbb{P}^2\times \Sym^d(V^\vee)$ is, by definition, the zero locus of the $\mathrm{GL}_3$-equivariant section

\begin{align*}
S_1:\quad \mathbb{P}^2\times \Sym^d(V^\vee) & \longrightarrow \quad\quad\quad\quad P^*\ \mathcal{P}^1(\mathcal{O}(d)) \\
(\ell,F) \quad\quad & \longmapsto \ [\text{the order-1 germ of $F$ at $\ell$}]
\end{align*}
where $\mathcal{P}^1(\mathcal{O}(d))$ is the bundle of first order principal parts of $\mathcal{O}(d)$ over $\mathbb{P}^2$ (see \cite[Section 7.2]{eisenbud_harris_2016}). Indeed, we have that 
\[
\mathcal{P}^1(\mathcal{O}(d)) = \mathcal{O}(d-1) \otimes V^\vee
\]
as $\mathrm{GL}_3$-equivariant bundles over $\mathbb{P}^2$, where $\mathcal{O}(d-1)$ is endowed with the natural $\mathrm{GL}_3$-action (induced from the tautological action on $\mathcal{O}(-1)$) and $V^\vee$ is the trivial rank-3 bundle with each fiber being the dual standard representation of $\mathrm{GL}_3$. The isomorphism is defined by associating to the order 1 germ of $F$ at $\ell \in \mathbb{P}^2$ the element  
\[
\sum_{i=1}^3 \frac{\partial F}{\partial z_i} (\ell) \otimes z_i,
\]  
where $z_1, z_2, z_3$ are the homogeneous coordinates of $\mathbb{P}^2=\mathbb{P}(V)$. Here, $\mathcal{O}(d-1)_{|\ell } \ni \frac{\partial F}{\partial z_i}(\ell): \ell^{\otimes (d-1)} \to \mathbb{C}$ maps an element $v^{\otimes (d-1)} \in \ell^{\otimes (d-1)}$ to $\frac{\partial F}{\partial z_i}(v)$. The $\mathrm{GL}_3$-equivariance of this isomorphism follows from a direct computation based on this explicit description.
Under such identification,
\[
S_1(\ell,F) = \left( \frac{\partial F}{\partial z_1}(\ell), \frac{\partial F}{\partial z_2}(\ell), \frac{\partial F}{\partial z_3}(\ell) \right) = \sum_{i=1}^3 \frac{\partial F}{\partial z_i}(\ell) \otimes z_i
\]
As a result, $(j_1)_* ([\tXs]) = $
\begin{equation}\label{eqn: fundamental class tXs}
c_{3}^{\mathrm{GL}_3}(P^* \mathcal{O}(d-1) \otimes V^\vee) = P^* \left((d-1)^3 \cdot h^3 - (d-1)^2 \cdot c_1 h^2 + (d-1) \cdot c_2 h - c_3\right),
\end{equation}
where $c_{3}^{\mathrm{GL}_3}$ denotes the $\mathrm{GL}_3$-equivariant Euler class (top Chern class), and thus
\[
(q_1)_*([\tXs]) = Q_* ((j_1)_* ([\tXs])) = - d(d-1)^2 \cdot c_1.
\]
Here we compute the push-forward $Q_*$ by first re-writing the expression in (\ref{eqn: fundamental class tXs}) into ($P^*$ of) a degree-$2$ polynomial of $h$ up to the relation $h^3+c_1 h^2 +c_2 h + c_3 = 0$ in $\CH^*_{\mathrm{GL}_3}(\mathbb{P}^2)$, and integrating along $\mathbb{P}^2$, which amounts to extracting the coefficient of $h^2$. Indeed, integrating along $\mathbb{P}^2$ sends $1\mapsto 0, h\mapsto 0$, and $h^2\mapsto 1$.

\par Similarly, we have
\begin{align*}
r(h) & = q_*([\tXs]\cdot p_1^* h) = Q_* ( (c_{3}^{\mathrm{GL}_3}(P^* \mathcal{O}(d-1) \otimes V^\vee) \cdot P^* h)),\\
r(h^2) & = q_*([\tXs]\cdot p_1^* h^2) = Q_* ( (c_{3}^{\mathrm{GL}_3}(P^* \mathcal{O}(d-1) \otimes V^\vee) \cdot P^* h^2)),
\end{align*}
and we obtain the second and the third formulae in the lemma again by re-writing into degree-$2$ polynomial of $h$ and extracting the coefficient of $h^2$.
\end{proof}

%\begin{corollary} We have 
%    \begin{equation*}
%    \CH_{\mathrm{GL}_3}^*(\PSCd) = \begin{cases}
%    \mathbb{Q}[c_1,c_2] & \quad d=1,\\
%    \mathbb{Q}[c_2] & \quad d=2,\\
%    \mathbb{Q} & \quad d\geq 3.
%    \end{cases}
%    \end{equation*}
%\end{corollary}

\begin{proof}[Proof of Theorem \ref{thm2}]
Indeed by the exact sequence \eqref{eqn: Excission 1}, we have
\[
\CH^*(\PSCd) = \mathbb{Q}[c_1,c_2,c_3] / (r(1),r(h),r(h^2)),
\]
and Theorem \ref{thm2} follows from our computation in Lemma \ref{lemma: excission 1}.
% The statement that the $c_i$ are pulled back from $\PNCd$ will instead follow from \S\ref{sec: proof of thm1}.
\end{proof}

\section{The Chow ring of $\PNCd$} \label{sec: Chow of PNCd}

In this section, we compute $\mathrm{CH}^*(\PNCd)$ (isomorphic to $\CH^*(\tPNCd)$ by Lemma \ref{lemma: iso up to gerb}) and prove Theorem \ref{thm1}, following a similar strategy as above. Recall that by definition,
\begin{equation}\label{eqn: def of Chow 1}
    \CH^*(\tPNCd)= \CH^*_{\mathrm{GL}_3}(\Sym^d (V^\vee) \smallsetminus \tDeltan),
\end{equation}
which fits into the excision sequence
\begin{equation}\label{eqn: Excission 2}
\CH^*_{\mathrm{GL}_3}(\tDeltan) \to \CH^*_{\mathrm{GL}_3}(\Sym^d (V^\vee)) \to  \CH^*_{\mathrm{GL}_3}(\Sym^d (V^\vee) \smallsetminus \tDeltan) \to 0.
\end{equation}
The first map in the sequence is the pushforward $i_*$ along the inclusion $i:\tDeltan\rightarrow \Sym^d (V^\vee)$. Our proof thus amounts to computing the image of $i_*$.

\subsection{The key diagram} \label{sec: the key diagram}
%The computation of $\CH^*(\tPNCd)$ requires the introduction of several spaces.
Let $\tXn$ be the incidence variety
\[
    \begin{tikzcd}
        & \tXn= \{(\ell,F)| \ell \in V(F)^{\mathrm{ >node}} \} \subseteq \mathbb{P}^2 \times \Sym^d(V^\vee) \arrow{dl}{p_2} \arrow{dr}{q_2}&    \\
        \mathbb{P}^2   &  & \Sym^d (V^\vee) &  
    \end{tikzcd}
\]
where for $F \in \Sym^d(V^\vee)$, $V(F)^{\mathrm{ >node}}\subset \mathbb{P}^2$ is the set of singular points of $V(F)$ where the singularity is worse than nodal. The maps $p_2$ and $q_2$ are respectively the restriction of $p_1$ and $q_1$ (see \S\ref{sec: proof of thm2}) to $\tXn$, and are both proper. Same as before, $q_2$ admits a factorization into
\[
\tXn \xlongrightarrow{\widetilde{q}_2} \tDeltan \xlongrightarrow{i} \Sym^d (V^\vee)
\]
where $\widetilde{q}_2$ is surjective and $i$ is the closed immersion introduced above.

Set $\mathcal{K}_d=\Sym^2 (\Omega_{\mathbb{P}^2} )\otimes \mathcal{O}(d)$ and recall that it fits in the exact sequence of vector bundles on $\mathbb{P}^2$ \cite[Theorem 7.2]{eisenbud_harris_2016}
\begin{equation}\label{eqn: exact sequence defining K}
0 \to \mathcal{K}_d \to \mathcal{P}^2(\mathcal{O}(d)) \xrightarrow{f} \mathcal{P}^1(\mathcal{O}(d)) \to 0
\end{equation}
where $\mathcal{P}^m (\mathcal{O}(d))$ is the bundle of $m$-th order principal parts of $\mathcal{O}(d)$ on $\mathbb{P}^2$ for $m=1,2$. Given $\ell\in\mathbb{P}^2$, an element $B_\ell$ in the fiber of $\mathcal{K}_d$ at $\ell$ may be regarded as a symmetric bilinear form $B_\ell: T_\ell \mathbb{P}^2 \times T_\ell \mathbb{P}^2 \to \mathcal{O}(d)|_{\ell}$, namely the Hessian of some order-$2$ germ polynomial which vanishes at order 1. We denote by
$$
\V \subseteq \mathsf{Tot} (\mathcal{K}_d )
$$
the locus of $(\ell,B_\ell)$ where $\ell \in \mathbb{P}^2$ and $B_\ell$ is a degenerate bilinear form, and by 
$$
\overline{\mathbb{V}} \subseteq \mathbb{P}(\mathcal{K}_d)
$$
its projectivization. Note that $\V$ is a cone over $\overline{\mathbb{V}}$.

\begin{remark} \label{remark: fiber of V}
   The fibers of $\psi: \V \rightarrow \mathbb{P}^2$ are all isomorphic to the locus
   $$
   \left\{\mathrm{det} \begin{pmatrix}x_1 & x_3 \\ x_3 & x_2 \end{pmatrix} = x_1 x_2 -x_3^2=0\right\} \subseteq \mathbb{A}^3
   $$ 
   of degenerate symmetric matrices.
   They are the affine cones of the $\mathbb{P}^1$-fibers of $\pi: \bV \rightarrow \mathbb{P}^2$, which are plane conics.% isomorphic to $\{x_1 x_2 -x_3^2=0\} \subseteq \mathbb{P}^2$. 
\end{remark}

\begin{lemma}
    $\bV$ is smooth and $\pi:\bV \to \mathbb{P}^2$ is Brauer-Severi of relative dimension $1$.
\end{lemma}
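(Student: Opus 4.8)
The plan is to identify $\bV$ with the projectivized cotangent bundle $\mathbb{P}(\Omega_{\mathbb{P}^2})$, after which both assertions become formal. I would start from the observation that the fiber of $\mathcal{K}_d=\Sym^2(\Omega_{\mathbb{P}^2})\otimes\mathcal{O}(d)$ at $\ell$ is a space of symmetric bilinear forms on the $2$-dimensional vector space $T_\ell\mathbb{P}^2$ (valued in a line), and that such a form is degenerate exactly when it has rank $\le 1$; the rank-$\le 1$ symmetric tensors form the affine cone over the image of the second Veronese embedding $\mathbb{P}(\Omega_{\mathbb{P}^2})\hookrightarrow\mathbb{P}(\Sym^2\Omega_{\mathbb{P}^2})$, $[\varphi]\mapsto[\varphi\cdot\varphi]$. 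Since projectivization is insensitive to twisting by a line bundle, $\mathbb{P}(\mathcal{K}_d)\cong\mathbb{P}(\Sym^2\Omega_{\mathbb{P}^2})$ over $\mathbb{P}^2$, and under this identification the relative Veronese is a closed immersion with image exactly $\bV$ — its fiberwise image being the conic of Remark \ref{remark: fiber of V}. This will yield a $\mathbb{P}^2$-isomorphism $\bV\cong\mathbb{P}(\Omega_{\mathbb{P}^2})$, which is moreover $\mathrm{GL}_3$-equivariant because $\Omega_{\mathbb{P}^2}$ carries its tautological linearization and the Veronese map is functorial.

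Granting this, the lemma follows at once: $\Omega_{\mathbb{P}^2}$ is a rank-$2$ vector bundle on the smooth variety $\mathbb{P}^2$, so $\mathbb{P}(\Omega_{\mathbb{P}^2})$ is smooth and $\pi$ is even a Zariski-locally-trivial $\mathbb{P}^1$-bundle — in particular a proper smooth morphism whose geometric fibers are all $\cong\mathbb{P}^1$, hence a Brauer-Severi scheme of relative dimension $1$ (with trivial class in the Brauer group). If one prefers to stay closer to Remark \ref{remark: fiber of V}, an equivalent route is to observe that $\bV$ is the relative divisor in $\pi\colon\mathbb{P}(\mathcal{K}_d)\to\mathbb{P}^2$ cut out by the relative determinant — a section of $\mathcal{O}_{\mathbb{P}(\mathcal{K}_d)}(2)\otimes\pi^*M$ for a suitable $M\in\mathrm{Pic}(\mathbb{P}^2)$ — which restricts on each fiber $\mathbb{P}^2=\mathbb{P}(\mathcal{K}_d|_\ell)$ to the nondegenerate ternary quadratic form $x_1x_2-x_3^2$; by Remark \ref{remark: fiber of V} every fiber of $\pi|_{\bV}$ is then a smooth integral conic, so $\pi|_{\bV}$ is flat (a relative Cartier divisor meeting each fiber properly) with smooth $1$-dimensional fibers, hence smooth, and composing with $\mathbb{P}^2\to\mathrm{Spec}\,\mathbb{C}$ shows $\bV$ is smooth; finally a smooth proper family of plane conics is a Brauer-Severi scheme of relative dimension $1$.

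I do not expect any genuine difficulty; the points that will need care are bookkeeping. On the first route one must fix the convention for $\mathbb{P}(-)$ and check both that the twist by $\mathcal{O}(d)$ really drops out and that the second Veronese is a \emph{relative} (not merely fiberwise) closed immersion. On the second route the one mildly nonformal input is the standard fact that a smooth proper conic bundle is automatically Brauer-Severi — which, if one wants to spell it out, can be seen by using the relative anticanonical embedding $\bV\hookrightarrow\mathbb{P}(\pi_*\omega_{\bV/\mathbb{P}^2}^{-1})$ to realize $\bV$ as a conic bundle whose fiberwise discriminant vanishes nowhere, that is, a twisted form of $\mathbb{P}^1$. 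I would present the first route as the main argument and only remark on the second.
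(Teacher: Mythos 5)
Your main argument (the identification $\bV\cong\mathbb{P}(\Omega_{\mathbb{P}^2})$ via the relative second Veronese) is correct but genuinely different from the paper's proof; your ``second route'' is essentially what the paper does. The paper realizes $\bV$ as the zero locus of the section $\wedge^2 s$ of the line bundle $\wedge^2(\Omega_{\mathbb{P}^2}(d))\otimes\bigl(\wedge^2(\mathcal{O}_{\mathbb{P}(\mathcal{K}_d)}(-1)\otimes T_{\mathbb{P}^2})\bigr)^\vee$ induced by the degeneracy condition, concludes that $\bV$ is a Cartier divisor and hence Cohen--Macaulay, and then gets flatness of $\pi$ by miracle flatness (constant fiber dimension over the smooth base $\mathbb{P}^2$) and smoothness from the fibers being smooth conics --- exactly your relative-determinant variant, with miracle flatness in place of your relative-Cartier-divisor criterion. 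Your Veronese route buys more: it exhibits $\pi$ as a Zariski-locally trivial $\mathbb{P}^1$-bundle, i.e.\ shows the Brauer class is trivial, which the paper's argument does not address (nor need). The paper's determinantal description has the compensating advantage of fitting the pattern used later in \S\ref{sec: proof of thm1}, where the analogous class $[\tXn]=V(\wedge^2\gamma_1)$ is computed as a first Chern class; your identification would not feed into that computation as directly. The only points to watch in your write-up are the ones you already flag: over $\mathbb{C}$ a degenerate symmetric bilinear form on a $2$-dimensional space is indeed a perfect square up to scalar, so the Veronese image is set-theoretically $\bV$, and since the fiberwise conic $x_1x_2-x_3^2=0$ of Remark \ref{remark: fiber of V} is reduced, the determinantal and reduced (hence Veronese-image) scheme structures agree, so the identification is an isomorphism of schemes and not merely a bijection on points.
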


\begin{proof}
    $\bV \subseteq \mathbb{P}(\mathcal{K}_d)$ can be described as the zero locus of a section of the line bundle
    $$
    \wedge^2(\Omega_{\mathbb{P}^2}(d)) \otimes \bigg( \wedge^2 \left( \mathcal{O}_{\mathbb{P}(\mathcal{K}_d)}(-1) \otimes T_{\mathbb{P}^2} \right) \bigg) ^\vee.
    $$
    Here $T_{\mathbb{P}^2}$ and $\Omega_{\mathbb{P}^2}(d)$ are both pulled back from $\mathbb{P}^2$, and $\mathcal{O}_{\mathbb{P}(\mathcal{K}_d)}(-1)$ is the tautological bundle whose fiber over $(\ell,[B_\ell])\in\mathbb{P}(\mathcal{K}_d)$ is $\mathbb{C}\langle B_\ell \rangle$. Namely, consider the homomorphism of vector bundles over $\mathbb{P}(\mathcal{K}_d)$
    \begin{align*}
    s: \mathcal{O}_{\mathbb{P}(\mathcal{K}_d)}(-1) \otimes T_{\mathbb{P}^2} & \to \Omega_{\mathbb{P}^2}(d) \\
    \lambda B_\ell \otimes v &\mapsto \lambda B_\ell(v,-)
    \end{align*}
    where $B_\ell(-,-)$ is regarded as a bilinear form on $T_{\mathbb{P}^2}$ with values in $\mathcal{O}(d)$. Then $\wedge^2 s$ gives a section of the aforementioned line bundle, and $\bV = V(\wedge^2 s) \subseteq \mathbb{P}(\mathcal{K}_d)$ is exactly the Cartier divisor that it cuts out, because $B_\ell$ is degenerate if and only if $\wedge^2 s = 0$. In particular, $\bV$ is Cohen-Macaulay \cite[Proposition 26.2.6]{Ravi}. Moreover, the fibers of $\pi:\bV\rightarrow\mathbb{P}^2$ are all isomorphic to $\mathbb{P}^1$, so $\pi$ is flat by miracle flatness and thus also smooth.
\end{proof}

The map $f$ in the sequence (\ref{eqn: exact sequence defining K}) is compatible with the surjective homomorphisms of vector bundles over $\mathbb{P}^2$ 
\begin{align*}
S_m \ (m=1,2):\quad \mathbb{P}^2\times \Sym^d(V^\vee) & \longrightarrow \mathcal{P}^m(\mathcal{O}(d)) \\
(\ell,F) \quad\quad & \longmapsto (\ell,[\text{the $m$-th order germ of $F$ at $\ell$}])
\end{align*}
in the sense that $S_1 = f\circ S_2$. By construction of the previous section, $\tXs \subseteq \mathbb{P}^2\times \Sym^d(V^\vee)$ is exactly (the total space of) $\ker(S_1)$, so $S_2|_{\tXs}$ factors through $\ker(f) = \mathcal{K}_d$. We denote the restricted map by
\[
\gamma_1 := S_2|_{\tXs}: \quad \tXs \longrightarrow \mathcal{K}_d,
\]
a homomorphism of vector bundles over $\mathbb{P}^2$. At $\ell\in\mathbb{P}^2$, $\gamma_1$ sends a polynomial $F$ singular at $\ell$ to its Hessian at $\ell$ (i.e. its ``purely second-order'' part)
\[
\mathrm{Hess}_\ell(F): \ T_\ell \mathbb{P}^2 \times T_\ell \mathbb{P}^2 \to \mathcal{O}(d)|_{\ell}
\]
It is not hard to see that $\gamma_1$ is again surjective, so it gives $\tXs$ the structure of a vector bundle over $\mathsf{Tot}(\mathcal{K}_d)$.

By \cite[Chapter X, Lemma 2.3]{ACGH}, $\tXn = \gamma_1^{-1}(\V) \subseteq \tXs$. In other words, $\ell\in V(F)\subseteq\mathbb{P}^2$ is a singularity worse than nodal if and only if $\mathrm{Hess}_\ell(F)$ is degenerate. We denote the further restriction (of $\gamma_1$ on the degenerate locus) by
\[
\gamma_2 := \gamma_1|_{\tXn} = S_2|_{\tXn}: \quad \tXn \longrightarrow \V.
\]
For a clearer picture of these maps defined above, see Diagram \eqref{eqn: expanded diagram}.

The following proposition provides a nice geometric description of $\tXn$ as is promised in Introduction, through the key diagram of this paper.
\begin{proposition} \label{Key proposition}
The diagram below of $\mathrm{GL}_3$-equivariant maps commutes, where:
\begin{equation}\label{eqn: key diagram}
    \begin{tikzcd}
        & \tXn \arrow{r}{\widetilde{q}_2} \arrow{d}{\gamma_2} & \tDeltan \arrow[hookrightarrow]{r}{i} & \Sym^d (V^\vee)   \\
        \mathsf{Tot}(\mathcal{O}_{\mathbb{P}(\mathcal{K}_d)}(-1)_{|\bV}) \arrow{r}{b} \arrow{d}{\rho} & \V  \arrow{ddl}{\psi} &  &\\
        \bV \arrow{d}{\pi} & & & \\
        \mathbb{P}^2
    \end{tikzcd}
\end{equation}
\begin{enumerate}
    \item[(i)] $\gamma_2$ as defined above gives $\tXn$ the structure of a vector bundle over $\V$. % factors through $\V$ and the restriction $\tXn \to \V$ is a vector bundle map. 
    \item[(ii)] $b$ is surjective and an isomorphism outside of the zero zection $0: \mathbb{P}^2 \to \V \subseteq \mathsf{Tot}(\mathcal{K}_d)$. It is defined given by
    $$
    \ b(\ell,[B_\ell];\lambda B_\ell)=(\ell;\lambda B_\ell)
    $$
    for $(\ell,[B_\ell],\lambda B_\ell) \in \mathsf{Tot}(\mathcal{O}_{\mathbb{P}(\mathcal{K}_d)}(-1)_{|\bV})$, where $\ell \in \mathbb{P}^2$, $B_\ell\in\mathsf{Tot}(\mathcal{K}_d)$ a degenerate bilinear form on $T_\ell\mathbb{P}^2$, and $\lambda\in\mathbb{C}$.
    \item[(iii)] $\widetilde{q}_2$, $i$, and $\pi$ are as defined earlier, and $\rho$ and $\psi$ are the obvious projections to the base space. In particular, $p_2$ and $q_2$ (see the beginning of this section) arise in the diagram as $p_2 = \psi\circ\gamma_2$ and $q_2 = i\circ\widetilde{q}_2$.
\end{enumerate}
\end{proposition}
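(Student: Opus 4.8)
The plan is to verify the three pieces of Proposition~\ref{Key proposition} essentially by unwinding the definitions of all the maps involved, since the genuinely geometric content (smoothness of $\bV$, the identification $\tXn = \gamma_1^{-1}(\V)$, the structure of $\mathcal{K}_d$) has already been established. First I would treat part (i): we already know from the discussion preceding the proposition that $\gamma_1: \tXs \to \mathcal{K}_d$ is a surjective homomorphism of vector bundles over $\mathbb{P}^2$, exhibiting $\tXs$ as a vector bundle over $\mathsf{Tot}(\mathcal{K}_d)$ (the kernel bundle, pulled back). Since $\tXn = \gamma_1^{-1}(\V)$ by \cite[Chapter X, Lemma 2.3]{ACGH} and $\gamma_2 = \gamma_1|_{\tXn}$, restricting the vector bundle $\tXs \to \mathsf{Tot}(\mathcal{K}_d)$ over the closed subscheme $\V \subseteq \mathsf{Tot}(\mathcal{K}_d)$ gives exactly that $\gamma_2: \tXn \to \V$ is a vector bundle. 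The $\mathrm{GL}_3$-equivariance is inherited from that of $\gamma_1$ and of the inclusion $\V \hookrightarrow \mathsf{Tot}(\mathcal{K}_d)$.

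Next I would handle part (ii), the description of $b$. The point is that $\mathsf{Tot}(\mathcal{O}_{\mathbb{P}(\mathcal{K}_d)}(-1)_{|\bV})$ is the blow-up (or rather the tautological resolution) of the cone $\V$ along its vertex, which is the zero section $0:\mathbb{P}^2 \to \V$: a point of the total space of the tautological bundle is a triple $(\ell, [B_\ell], \lambda B_\ell)$ with $[B_\ell] \in \bV$ a degenerate line and $\lambda B_\ell$ an actual vector on that line, and sending it to $(\ell, \lambda B_\ell) \in \V$ forgets the choice of line $[B_\ell]$, which is canonically recovered as long as $\lambda B_\ell \neq 0$. So $b$ is a well-defined $\mathrm{GL}_3$-equivariant morphism, surjective (every degenerate $B_\ell \in \V$, zero or not, is hit), and restricts to an isomorphism over $\V \smallsetminus 0(\mathbb{P}^2)$ with inverse $(\ell, B_\ell) \mapsto (\ell, [B_\ell], B_\ell)$. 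Over the zero section the fiber of $b$ is the whole conic $\bV_\ell \cong \mathbb{P}^1$. I should double-check that $b$ is a morphism of schemes (not just a set map) — this is clear since on the standard charts it is given by the obvious polynomial formulas — and that it is compatible with the projections to $\bV$ and $\mathbb{P}^2$, i.e. $\psi \circ b = \pi \circ \rho$, which is immediate from the formula $b(\ell,[B_\ell];\lambda B_\ell) = (\ell;\lambda B_\ell)$.

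Finally, part (iii) and the commutativity of the square. The relations $\psi = \pi \circ \rho \circ (\text{inclusion of } b)$, i.e. $\psi \circ b = \pi \circ \rho$, and $p_2 = \psi \circ \gamma_2$, $q_2 = i \circ \widetilde q_2$ are all definitional once one recalls that $p_2, q_2$ are the restrictions of the two projections $P, Q$ and that $\gamma_2 = S_2|_{\tXn}$ lands in $\V$ over the correct point of $\mathbb{P}^2$. Concretely, for $(\ell, F) \in \tXn$ we have $\gamma_2(\ell,F) = (\ell, \mathrm{Hess}_\ell(F))$, and $\psi(\ell, \mathrm{Hess}_\ell(F)) = \ell = p_2(\ell, F)$; similarly $i(\widetilde q_2(\ell, F)) = F = q_2(\ell,F)$. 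So the diagram commutes on the nose.

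The main obstacle, to the extent there is one, is purely bookkeeping: making sure the various total-space descriptions and the $\mathrm{GL}_3$-linearizations line up, in particular that $b$ really is a morphism (not merely a continuous or set-theoretic map) and that the identification of $\mathsf{Tot}(\mathcal{O}_{\mathbb{P}(\mathcal{K}_d)}(-1)_{|\bV})$ with the blow-up of the cone $\V$ at its vertex is the standard one and is $\mathrm{GL}_3$-equivariant. I expect this to be routine and would dispatch it by working on an affine-étale chart of $\mathbb{P}^2$ where $\mathcal{K}_d$ trivializes and $\V$ becomes the product of the chart with the affine cone $\{x_1 x_2 - x_3^2 = 0\} \subseteq \mathbb{A}^3$ of Remark~\ref{remark: fiber of V}, reducing everything to the classical statement that $\mathsf{Tot}(\mathcal{O}_{\mathbb{P}^1}(-1)|_{\text{conic}})$ resolves that quadric cone.
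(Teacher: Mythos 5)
Your proposal is correct and follows essentially the same route as the paper: part (i) is obtained by pulling back the vector bundle $\gamma_1:\tXs\to\mathsf{Tot}(\mathcal{K}_d)$ along the inclusion $\V\hookrightarrow\mathsf{Tot}(\mathcal{K}_d)$, and parts (ii)--(iii) are verified directly from the definitions. The paper's own proof is just this, stated more tersely (``the remaining statements are clear by construction''), so your extra chart-by-chart verification of $b$ is harmless elaboration rather than a different argument.
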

\begin{proof}
    $\gamma_2$ is exactly the pull-back of the bundle $\gamma_1:\tXs\rightarrow\mathsf{Tot}(\mathcal{K}_d)$ along the inclusion $\V\hookrightarrow\mathsf{Tot}(\mathcal{K}_d)$, and thus gives $\tXn$ the structure of a vector bundle over $\V$. The remaining statements are clear by construction.
\end{proof}

%\begin{remark} \label{remark: fiber of V}
%   The fibers of $\psi: \V \rightarrow \mathbb{P}^2$ are all isomorphic to $\{x_1 x_2 -x_3^2=0\} \subseteq \mathbb{A}^3$. They are the affine cones of the $\mathbb{P}^1$-fibers of $\pi: \bV \rightarrow \mathbb{P}^2$, which are plane conics isomorphic to $\{x_1 x_2 -x_3^2=0\} \subseteq \mathbb{P}^2$. 
%\end{remark}

\subsection{Computation of $\CH^*(\PNCd)$} \label{sec: proof of thm1}

\par Recall that in order to compute $\CH^*(\tPNCd)$, it suffices to understand the image of 
\[
i_*: \CH^*_{\mathrm{GL}_3}(\tDeltan) \to \CH^*_{\mathrm{GL}_3}(\Sym^d (V^\vee)).
\]
According to Proposition \ref{Key proposition}, $\rho$ and $\gamma_2$ are both projection to the base space of vector bundles, and $b$ and $\widetilde{q}_2$ are both proper surjective. Therefore, the composition $(\widetilde{q}_2)_* \circ (\gamma_2)^* \circ b_* \circ \rho^*$ gives a surjection of $\mathrm{GL}_3$-equivariant Chow groups, and consequently
\[
r := i_* \circ (\widetilde{q}_2)_* \circ (\gamma_2)^* \circ b_* \circ \rho^* : \quad \CH^*_{\mathrm{GL}_3}(\bV) \rightarrow \CH^*_{\mathrm{GL}_3}(\Sym^d (V^\vee))
\]
has the same image as $i_*$ does. 

\par We first describe the generators of $\CH^*_{\mathrm{GL}_3}(\bV)$. In fact, this can be done under more generality for $G$-projective Brauer-Severi schemes, and we carry out the derivation in  \S\ref{sec: Equivariant Chow groups of Brauer-Severi varieties}. The following lemma comes from Proposition \ref{proposition: equivariant Chow of Brauer Severi}, by taking $X = \bV$, $B = \mathbb{P}^2$, $G = \mathrm{GL}_3$ and $L= \mathcal{O}_{\mathbb{P}(\mathcal{K}_d)}(1)_{| \bV}$. $L$ is relatively ample because the inclusion $\bV \hookrightarrow \mathbb{P}(\mathcal{K}_d)$ is fiber-wise (i.e. over the base $B = \mathbb{P}^2$) a plane conic.
\begin{lemma} \label{corollary: generators of Chow of bV}
$\CH^*_{\mathrm{GL}_3}(\bV)$ is generated by $\beta_{xy} := \alpha^x \cdot \pi^* h^y \ (x=0,1;y=0,1,2)$ as a module over $\CH^*_{\mathrm{GL}_3}(\mathrm{Spec}(\C)) = \mathbb{Q}[c_1,c_2,c_3]$, where $\alpha = c_1^{\mathrm{GL}_3}(\mathcal{O}_{\mathbb{P}(\mathcal{K}_d)}(1)|_{\bV})$ and $h = c_1^{\mathrm{GL}_3}(\mathcal{O}_{\mathbb{P}^2}(1))$.
\end{lemma}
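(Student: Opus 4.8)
The plan is to reduce the statement to the projective bundle formula. I would first observe that $\bV$ is in fact a genuine projective bundle over $\mathbb{P}^2$: a degenerate symmetric bilinear form on the two-dimensional space $T_\ell\mathbb{P}^2$ has rank $\le 1$, hence is a scalar multiple of $\phi^{\otimes 2}$ for some $\phi\in(\Omega_{\mathbb{P}^2})|_{\ell}$, so $\bV\hookrightarrow\mathbb{P}(\mathcal{K}_d)$ is, fiberwise over $\mathbb{P}^2$, the quadratic Veronese embedding of $\mathbb{P}(\Omega_{\mathbb{P}^2})$ into $\mathbb{P}(\Sym^2\Omega_{\mathbb{P}^2})$ — the latter agreeing with $\mathbb{P}(\mathcal{K}_d)$ since tensoring by $\mathcal{O}(d)$ does not change a projectivization. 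As both sides are flat over $\mathbb{P}^2$, this globalizes to a $\mathrm{GL}_3$-equivariant isomorphism $\bV\cong\mathbb{P}(\Omega_{\mathbb{P}^2})$, identifying $\pi$ with the bundle projection and sending $L=\mathcal{O}_{\mathbb{P}(\mathcal{K}_d)}(1)|_{\bV}$ to $\mathcal{O}_{\mathbb{P}(\Omega_{\mathbb{P}^2})}(2)$ up to a twist by a line bundle pulled back from $\mathbb{P}^2$; in particular $\alpha=c_1^{\mathrm{GL}_3}(L)$ agrees with $2\xi$ modulo $\pi^*\CH^*_{\mathrm{GL}_3}(\mathbb{P}^2)$, where $\xi=c_1^{\mathrm{GL}_3}(\mathcal{O}_{\mathbb{P}(\Omega_{\mathbb{P}^2})}(1))$.

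Granting this, the lemma follows by iterating the $\mathrm{GL}_3$-equivariant projective bundle formula \cite[Lemma 2.3]{edidin_fulghesu_2009} along $\bV=\mathbb{P}(\Omega_{\mathbb{P}^2})\xrightarrow{\pi}\mathbb{P}^2=\mathbb{P}(V)\to\mathrm{BGL}_3$. Since $\CH^*_{\mathrm{GL}_3}(\mathbb{P}^2)=\Q[c_1,c_2,c_3,h]/(h^3+c_1h^2+c_2h+c_3)$ is free over $\Q[c_1,c_2,c_3]$ on $1,h,h^2$, the formula for $\pi$ makes $\CH^*_{\mathrm{GL}_3}(\bV)$ free over $\CH^*_{\mathrm{GL}_3}(\mathbb{P}^2)$ on $1,\xi$, hence free over $\Q[c_1,c_2,c_3]$ on $\{\xi^x\pi^*h^y:x\in\{0,1\},\,y\in\{0,1,2\}\}$. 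Because $\alpha\equiv 2\xi$ modulo $\pi^*\CH^*_{\mathrm{GL}_3}(\mathbb{P}^2)$ and we work with $\Q$-coefficients, $\{1,\alpha\}$ is an equally good basis over $\CH^*_{\mathrm{GL}_3}(\mathbb{P}^2)$, so the classes $\beta_{xy}=\alpha^x\pi^*h^y$ generate (indeed freely) $\CH^*_{\mathrm{GL}_3}(\bV)$ over $\Q[c_1,c_2,c_3]$. This is precisely the output of the general Proposition \ref{proposition: equivariant Chow of Brauer Severi} applied to $(X,B,G,L)=(\bV,\mathbb{P}^2,\mathrm{GL}_3,\mathcal{O}_{\mathbb{P}(\mathcal{K}_d)}(1)|_{\bV})$, whose hypotheses hold as noted before the statement: $\pi$ is $\mathrm{GL}_3$-equivariant Brauer--Severi of relative dimension $1$, $L$ carries a $\mathrm{GL}_3$-linearization (restrict the tautological one on $\mathbb{P}(\mathcal{K}_d)$), and $L$ is $\pi$-ample since $\bV\hookrightarrow\mathbb{P}(\mathcal{K}_d)$ is fiberwise a plane conic.

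I expect essentially no obstacle in the lemma itself — the Veronese identification trivializes it, and the Brauer--Severi language can even be avoided here. The real difficulty is in Proposition \ref{proposition: equivariant Chow of Brauer Severi}, which is needed elsewhere in the paper in situations where no such identification is available: there one must show that the powers of $\alpha=c_1^G(L)$ generate $\CH^*_G(X)$ over $\CH^*_G(B)$ for an arbitrary $G$-projective Brauer--Severi scheme $\pi\colon X\to B$, which in general is not a projectivized vector bundle. The approach there would be the $G$-equivariant analogue of Vial's argument \cite{Vial}: pass to a base change that splits the Brauer--Severi scheme — for instance to $X\times_B X\to X$, which carries the diagonal section and is therefore a projectivized bundle — apply the ordinary projective bundle formula upstairs, and descend to $B$ by a transfer/projection-formula argument valid with $\Q$-coefficients. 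The $G$-equivariance would then be handled by carrying this out on the Edidin--Graham finite-dimensional approximations $(X\times U)/G$; the points needing care will be choosing the splitting base change $G$-equivariantly and keeping the relevant linearizations compatible.
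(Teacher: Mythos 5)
Your proof is correct, and for the lemma itself it takes a genuinely different and more elementary route than the paper. The paper's own proof consists of quoting Proposition~\ref{proposition: equivariant Chow of Brauer Severi} (the equivariant extension of Vial's theorem developed in \S\ref{sec: Equivariant Chow groups of Brauer-Severi varieties}) with $(X,B,G,L)=(\bV,\mathbb{P}^2,\mathrm{GL}_3,\mathcal{O}_{\mathbb{P}(\mathcal{K}_d)}(1)|_{\bV})$, justifying relative ampleness by the fiberwise-conic observation --- exactly the alternative you record at the end of your second paragraph. Your primary argument instead identifies $\bV$ with the relative quadratic Veronese image of $\mathbb{P}(\Omega_{\mathbb{P}^2})$ inside $\mathbb{P}(\Sym^2\Omega_{\mathbb{P}^2})=\mathbb{P}(\mathcal{K}_d)$ (a degenerate symmetric bilinear form on a two-dimensional space has rank $\le 1$, hence is a square), so that $\pi$ is an honest $\mathbb{P}^1$-bundle and two applications of the equivariant projective bundle formula finish the proof; the substitution of $\alpha$ for $\xi$ is harmless since $\alpha\equiv 2\xi$ modulo $\pi^*\CH^*_{\mathrm{GL}_3}(\mathbb{P}^2)$ and $2$ is invertible over $\Q$. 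This is sound: the $2\times 2$ determinantal locus $x_1x_2-x_3^2=0$ is the reduced Veronese cone, consistent with Remark~\ref{remark: fiber of V}, and your argument even yields freeness on the stated generators while making the Brauer--Severi machinery dispensable here. What the paper's route buys is generality: Proposition~\ref{proposition: equivariant Chow of Brauer Severi} uses nothing about $\bV$ beyond its fibers being conics, and the authors advertise it as being of independent interest. One small correction to your closing sketch of how that proposition is proved: the paper does not need to split the Brauer class $G$-equivariantly or redo Vial's transfer argument; it applies Vial's non-equivariant theorem as a black box to the Edidin--Graham approximation $f_G\colon X_G\to B_G$, and the only equivariant care is in Lemma~\ref{lemma: quotients} (quasi-projectivity of $B_G$ and projectivity of $f_G$, which is where the $G$-linearized relatively ample bundle is used).
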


\par Then we describe the images of the generators $\beta_{xy}\ (x=0,1;y=0,1,2)$ under $r$.
\begin{proposition}
\[
r = i_* \circ (\widetilde{q}_2)_* \circ (\gamma_2)^* \circ b_* \circ \rho^*: \quad \CH^*_{\mathrm{GL}_3}(\bV) \rightarrow \CH^*_{\mathrm{GL}_3}(\Sym^d (V^\vee))
\]
satisfies
\[r(\beta_{10}) = r(\beta_{11}) = r(\beta_{12}) = 0.\]
\begin{align*}
    r(\beta_{00}) = & - 2d(d-1)(d-2)(d-3) \cdot c_2 + 2d(d-1)^2(d-2)\cdot c_1^2. \\
    r(\beta_{01}) = & -2d(d-3)(d^2-3d+3) \cdot c_3 + 2d(d-1)(2d^2-8d+7) \cdot c_1c_2 \\
    & - 2d(d-1)^2(d-2) \cdot c_1^3. \\
    r(\beta_{02}) = & \ 2d(2d^3-10d^2+16d-9) \cdot c_1c_3 + 2d(d-1)^2(d-2) \cdot c_1^4 \\
    & + 2d(d-1)(d-2)(d-3) \cdot c_2^2 - 2d(d-1)(3d^2-11d+9) \cdot c_1^2c_2.
\end{align*}
\end{proposition}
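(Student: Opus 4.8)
The plan is to compute $r$ by pushing the generators $\beta_{xy}$ step by step along the chain
\[
\CH^*_{\mathrm{GL}_3}(\bV) \xrightarrow{\rho^*} \CH^*_{\mathrm{GL}_3}(\mathsf{Tot}(\mathcal{O}_{\mathbb{P}(\mathcal{K}_d)}(-1)_{|\bV})) \xrightarrow{b_*} \CH^*_{\mathrm{GL}_3}(\V) \xrightarrow{\gamma_2^*} \CH^*_{\mathrm{GL}_3}(\tXn) \xrightarrow{(\widetilde q_2)_*} \CH^*_{\mathrm{GL}_3}(\tDeltan) \xrightarrow{i_*} \CH^*_{\mathrm{GL}_3}(\Sym^d(V^\vee)),
\]
and at the end to re-express everything in $\Q[c_1,c_2,c_3]$. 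The vanishing statements $r(\beta_{1y})=0$ should come for free: $\beta_{1y}=\alpha\cdot\pi^*h^y$, and $\alpha$ is pulled back from $\mathbb{P}(\mathcal{K}_d)$; I expect that $b_*\rho^*(\alpha\cdot\pi^*h^y)$ already lands in the image of the zero section $0:\mathbb{P}^2\to\V$ (since $b$ is an isomorphism away from that section and collapses it), so after applying $\gamma_2^*$ and $(\widetilde q_2)_*$ one gets a class supported on the pullback of the ``most degenerate'' locus, which by a dimension/codimension count pushes to zero in $\Sym^d(V^\vee)$ — or, more cleanly, $\alpha$ restricted to the image of $b$ composed with $\rho$ satisfies a relation that kills it. I would make this precise using the projective bundle relation for $\mathbb{P}(\mathcal{K}_d)\to\mathbb{P}^2$ together with the fact that $\bV$ is cut out by a section of the line bundle identified in the lemma above, so $\alpha$ on $\bV$ satisfies a quadratic relation over $\CH^*_{\mathrm{GL}_3}(\mathbb{P}^2)$; combined with the cone structure $b$ this forces the $\beta_{1y}$ to die.

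For the nonzero images $r(\beta_{0y})$, the key simplification is that $\rho$, $\gamma_2$ are vector bundle projections (so their pullbacks are isomorphisms and commute with everything in sight), $b$ is a proper birational map with $b_*\mathbf{1}=\mathbf{1}$ on the open part, and $\widetilde q_2$ is proper surjective. The cleanest route is to observe that $p_2=\psi\circ\gamma_2$ and $q_2=i\circ\widetilde q_2$, and that $p_2$ factors through $\V$ while $\gamma_2$ exhibits $\tXn$ as the pullback of the vector bundle $\gamma_1:\tXs\to\mathsf{Tot}(\mathcal{K}_d)$. So for a class $c\in\CH^*_{\mathrm{GL}_3}(\bV)$ pulled back to $\tXn$, the composite $i_*(\widetilde q_2)_*\gamma_2^* b_*\rho^*(c)$ equals $(q_2)_*$ of the corresponding class on $\tXn$, and $\tXn=\gamma_1^{-1}(\V)\subseteq\tXs$. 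Thus I can compute $r(\beta_{0y})$ as $(q_1)_*\big(\,[\tXn]_{\tXs}\cdot p_1^*(\text{stuff})\,\big)$, where $[\tXn]$ is the class of $\tXn$ inside $\tXs$ (equivalently inside $\mathbb{P}^2\times\Sym^d(V^\vee)$ after multiplying by the already-known Euler class $c_3^{\mathrm{GL}_3}(P^*\mathcal{O}(d-1)\otimes V^\vee)$ from \eqref{eqn: fundamental class tXs}), and ``stuff'' is $\pi^*h^y$ transported to $\tXn$. This reduces the whole computation to: (a) the class of $\V$ in $\mathsf{Tot}(\mathcal{K}_d)$, i.e. the Euler class of the line bundle $\wedge^2(\Omega_{\mathbb{P}^2}(d))\otimes(\wedge^2(\mathcal{O}_{\mathbb{P}(\mathcal{K}_d)}(-1)\otimes T_{\mathbb{P}^2}))^\vee$ pushed down appropriately — actually the class of the degenerate-symmetric-forms divisor in the rank-3 bundle $\mathcal{K}_d$, which is the discriminant and has a standard Thom-class/Chern-class formula; and (b) bookkeeping of how $h$ and the hyperplane class $\alpha$ interact with $c_1,c_2,c_3$ via the relations $h^3+c_1h^2+c_2h+c_3=0$ on $\mathbb{P}^2$ and the conic relation for $\alpha$ on $\bV$.

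Concretely, I would: first compute $\CH^*_{\mathrm{GL}_3}(\V)$ and the pushforward $b_*\rho^*$ by using that $b:\mathsf{Tot}(\mathcal{O}_{\mathbb{P}(\mathcal{K}_d)}(-1)_{|\bV})\to\V$ is the blow-down collapsing the zero section, so $b_*\rho^*(\alpha^x\pi^*h^y)$ is computed by the projection formula once one knows how $\alpha$ sits over $\V$; next, transport to $\tXn$ via the vector bundle iso $\gamma_2^*$; then compute $(\widetilde q_2)_*$ composed with $i_*$, i.e. $(q_2)_*$, by realizing $\tXn$ inside $\mathbb{P}^2\times\Sym^d(V^\vee)$ and multiplying by the appropriate equivariant Euler class, exactly as in the proof of Lemma \ref{lemma: excission 1} but now with the extra discriminant factor for the Hessian dropping rank; finally, integrate along $\mathbb{P}^2$ by reducing modulo $h^3+c_1h^2+c_2h+c_3$ and extracting the coefficient of $h^2$. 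The main obstacle I anticipate is step (a)/(b) merged: getting the class of $\V\subseteq\mathsf{Tot}(\mathcal{K}_d)$ correctly as a polynomial in $h$ (using $\mathcal{K}_d=\Sym^2\Omega_{\mathbb{P}^2}\otimes\mathcal{O}(d)$ and the exact sequence \eqref{eqn: exact sequence defining K} to get its equivariant Chern roots), then carefully carrying out the projective-bundle pushforward for $\mathbb{P}(\mathcal{K}_d)\to\mathbb{P}^2$ and the conic relation — the degree-$d$ polynomials in $d$ that appear in $r(\beta_{0y})$ leave no room for sign or combinatorial error, so the bulk of the work is organizing this linear algebra over $\Q[c_1,c_2,c_3][h]/(h^3+c_1h^2+c_2h+c_3)$ cleanly enough to extract the stated coefficients.
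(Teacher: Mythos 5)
Your plan follows essentially the same route as the paper's proof: the vanishing of $r(\beta_{1y})$ via the observation that $\rho^*\alpha$ is (minus) the zero-section class, which $b$ collapses onto the lower-dimensional $\mathbb{P}^2\subseteq\V$; the reduction of $r(\beta_{0y})$ to $(q_2)_*(p_2^*h^y)$ using $b_*b^*=\mathrm{id}$; the identification of $[\tXn]$ inside $\mathbb{P}^2\times\Sym^d(V^\vee)$ as the product of the Euler class $c_3^{\mathrm{GL}_3}(P^*\mathcal{O}(d-1)\otimes V^\vee)$ with the discriminant class $c_1^{\mathrm{GL}_3}\bigl((P\circ j_1)^*K_{\mathbb{P}^2}^{\otimes 2}(2d)\bigr)=(2d-6)h-2c_1$; and the final pushforward by reducing modulo $h^3+c_1h^2+c_2h+c_3$ and extracting the $h^2$-coefficient. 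Your first instinct for the vanishing (zero section plus dimension count) is exactly the paper's argument, and is preferable to the vaguer alternative you float involving a quadratic relation for $\alpha$ on $\bV$.
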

\begin{proof}
When $x=1$, $r(\beta_{1y}) = i_* \circ (\widetilde{q}_2)_* \circ (\gamma_2)^* \circ b_* \circ \rho^* (\alpha \cdot \pi^* h^y) = 0$ because 
\begin{equation}\label{eqn: powers of alpha}
b_* \circ \rho^*(\alpha) = 0
\end{equation}
and thus
$$
b_* \circ \rho^* (\alpha \cdot \pi^* h^y)=( b_* \circ \rho^*(\alpha)) \cdot \psi^* (h^y)=0
$$
by projection formula. Equation \eqref{eqn: powers of alpha} is a consequence of the fact that
%the line bundle $\rho^* (\mathcal{O}_{\mathbb{P}(\mathcal{K}_d)}(-1)|_{\bV})$ admits a tautological section whose zero locus is exactly $\bV$, which implies 
\[
\rho^*(\alpha) = - c_1^{\mathrm{GL}_3}( \rho^* \mathcal{O}_{\mathbb{P}(\mathcal{K}_d)}(-1)_{|\bV}) = - [\bV] \quad \in \CH^*_{\mathrm{GL}_3}\left( \mathsf{Tot}(\mathcal{O}_{\mathbb{P}(\mathcal{K}_d)}(-1)|_{\bV}) \right),
\]
where $\bV \subseteq \mathsf{Tot}(\mathbb{P}(\mathcal{K}_d)(-1)_{|\bV})$  (the zero section) is mapped by $b$ to a lower-dimensional space, implying that $b_*[\bV] = 0$.% by dimension reasons, which in turn implies $b_* \circ \rho^*(\alpha) = 0$.

When $x=0$, since $b$ is surjective and birational, we have $b_*b^* = \operatorname{id}$,
and thus
\[
r(\beta_{0y}) = i_* \circ (\widetilde{q}_2)_* ((\gamma_2)^* \circ b_* \circ \rho^* (\pi^* h^y)) = (q_2)_* ((p_2)^* (h^y))
\]
by the projection formula, where $p_2: \tXn \rightarrow \mathbb{P}^2$ and $q_2: \tXn \rightarrow \Sym^d(V^\vee)$ are the maps defined at the beginning of this section. For computational purposes, we expand the right half of Diagram (\ref{eqn: key diagram}) into the commutative diagram below, where all morphisms are $\mathrm{GL}_3$-equivariant.
\begin{equation}\label{eqn: expanded diagram}
\begin{tikzcd}
 \tDeltan \arrow[hookrightarrow]{r}{i_2} & \tDeltas \arrow[hookrightarrow]{r}{i_1} & \Sym^d(V^\vee) & \ \\
 \tXn \arrow{u}{\widetilde{q}_2} \arrow[hookrightarrow]{r}{j_2} \arrow{d}{\gamma_2} \arrow[phantom, "\square"]{dr} & \tXs \arrow{u}{\widetilde{q}_1} \arrow[hookrightarrow]{r}{j_1} \arrow{d}{\gamma_1} & \mathbb{P}^2 \times \Sym^d(V^\vee) \arrow{u}{Q} \arrow{r}{P} & \mathbb{P}^2 \\
 \V \arrow[hookrightarrow]{r}{J} & \mathsf{Tot}(\mathcal{K}_d) & \ & \  
\end{tikzcd}
\end{equation}
In this diagram, $i$, $p_2$, and $q_2$ arise respectively as $i = i_1\circ i_2$, $p_2 = P\circ j_1\circ j_2$, and $q_2 = i\circ \widetilde{q}_2 = i_1\circ i_2\circ\widetilde{q}_2 = Q\circ j_1\circ j_2$. 
%Moreover, some of the maps that we have used in the previous section (for the consideration of $\tXs$) re-appear as well: $p_1 = P\circ j_1$ and $q_1 = i_1\circ \widetilde{q}_1 = Q\circ j_1$. 
Applying further the projection formula, we have
\[
r(\beta_{0y}) = Q_* \left( P^* (h^y) \cdot (j_1 \circ j_2)_*[\tXn] \right).
\]

\par We study $(j_2)_*[\tXn] \in \CH^*_{\mathrm{GL}_3}(\tXs)$ first. Recall that the vector bundle homomorphism $\gamma_1$ is realized by $\gamma_1(\ell,F) = (\ell,\mathrm{Hess}_\ell(F))$, and $\tXn\subseteq\tXs$ is exactly where $\mathrm{Hess}_\ell(F)$ is degenerate. $\gamma_1$ may equivalently be interpreted as a section $\gamma_1: \tXs \rightarrow (P\circ j_1)^* (\mathcal{K}_d)$ of the vector bundle
\[
(P\circ j_1)^* (\mathcal{K}_d) = (P\circ j_1)^* (\Sym^2(\Omega_{\mathbb{P}^2})\otimes\mathcal{O}(d)),
\]
and thus a vector bundle homomorphism
\begin{align*}
\gamma_1: \ (P\circ j_1)^* (T_{\mathbb{P}^2}) & \longrightarrow \quad (P\circ j_1)^* (\Omega_{\mathbb{P}^2}(d)) \\
\left( l,F; v \right) \quad & \longmapsto \ \left( l,F; \mathrm{Hess}_\ell(F)(v,-) \right)
\end{align*}
over $\tXs$. Note that all maps above are $\mathrm{GL}_3$-equivariant. 

By construction, $\tXn$ is exactly the locus of $(l,F)$ such that $\mathrm{Hess}_\ell(F)$ is degenerate. That is to say, $\tXn= V(\wedge^2\gamma_1)$ where $\wedge^2\gamma_1$ is regarded as a $\mathrm{GL}_3$-equivariant section of the line bundle
$$
L = (P\circ j_1)^* \left( \wedge^2T_{\mathbb{P}^2}^\vee \otimes \wedge^2 (\Omega _{\mathbb{P}^2} \otimes \mathcal{O}(d)) \right) = (P\circ j_1)^* \left( K_{\mathbb{P}^2}^{\otimes 2}(2d) \right).
$$
Here $K_{\mathbb{P}^2} = \wedge^2 \Omega _{\mathbb{P}^2}$ is the canonical bundle of $\mathbb{P}^2$ endowed with the natural $\mathrm{GL}_3$-action. By Proposition \ref{Key proposition}, $\tXn$ is integral of codimension $1$ in $\tXs$, so we have
%Recall that the $\mathrm{GL}_3$-equivariant vector bundle homomorphism $\gamma_1$ is realized as $\gamma_1(\ell,F) = (\ell,\mathrm{Hess}_\ell(F))$, and $\tXn$ is where the Hessian is degenerate. The bilinear form $\mathrm{Hess}_\ell(F)$ over $\mathbb{P}^2$ yields a $\mathrm{GL}_3$-equivariant morphism of bundles over $\tXs$
%\begin{align}\label{eqn: def of tXn}
% \mathrm{Hess}: T_{\mathbb{P}^2} & \to \Omega _{\mathbb{P}^2} \otimes \mathcal{O}(d) \\
%  (\ell,F, v) &\mapsto \mathrm{Hess}_\ell(F)(v,-)
%\end{align}
%where $\mathrm{Hess}_\ell(F)$ denotes the Hessian of $F$ at $\ell$, and $\tXn$ can be described as the locus inside $\tXs$ where \eqref{eqn: def of tXn} has rank $<2$. Equivalently, $\tXn= V( \wedge^2 \mathrm{Hess}) $ where 
%$$
%\wedge^2 \mathrm{Hess} \in H^0(\tXs, \wedge^2T_{\mathbb{P}^2}^\vee \otimes \wedge^2 (\Omega _{\mathbb{P}^2} \otimes \mathcal{O}(d)).
%$$
\[
(j_2)_*[\tXn] = c_1^{\mathrm{GL}_3}(L) = (P\circ j_1)^* ((2d-6)\cdot h - 2\cdot c_1) \quad \in \CH^*_{\mathrm{GL}_3}(\tXs).
\]
Here we used $c_1^{\mathrm{GL}_3}(K_{\mathbb{P}^2}) = - 3 \cdot h - c_1$,
%and $c_1^{\mathrm{GL}_3}( \Omega _{\mathbb{P}^2} \otimes \mathcal{O}(d)))=(2d-3)h-c_1$
which follows from the ($\mathrm{GL}_3$-equivariant) Euler sequence
$$
0 \to \Omega_{\mathbb{P}^2} \to \mathcal{O}(-1) \otimes V^\vee \to \mathcal{O} \to 0.
$$
%for line bundle
%\[
%L = (P\circ j_2)^* \left( (\wedge^2 T_{\mathbb{P}^2})^{-1} \otimes (\wedge^2 \Omega_{\mathbb{P}^2}(d)) \right) = (P\circ j_2)^* \left( K_{\mathbb{P}^2}^{\otimes 2}(2d) \right)
%\]
%where $K_{\mathbb{P}^2}$ is the canonical bundle of $\mathbb{P}^2$ endowed with the natural $\mathrm{GL}_3$-action. By the Euler sequence of $\mathbb{P}^2$, we have $c_1^{\mathrm{GL}_3}(K_{\mathbb{P}^2}) = -3\cdot h -c_1$, and thus
%\[
%(j_1)_*[\tXn] = (P\circ j_2)^* ((2d-6)\cdot h - 2\cdot c_1) \quad \in \CH^*_{\mathrm{GL}_3}(\tXs).
%\]

\par Combining the above formula with our computation of $(j_1)_*[\tXs]$ in Equation \eqref{eqn: fundamental class tXs}, and plugging it back into $r(\beta_{0y})$, we obtain
\begin{align}
\begin{split}
    r(\beta_{0y}) =& Q_* \left( P^* (h^y) \cdot P^* ((2d-6)\cdot h - 2\cdot c_1) \cdot (j_1)_*[\tXs] \right) \\
    =& Q_* P^* \bigg( (h^y) \cdot ((2d-6) \cdot h - 2\cdot c_1) \cdot ((d-1)^3 \cdot h^3 - (d-1)^2 \cdot c_1 h^2  \\
    &+(d-1) \cdot c_2 h - c_3) \bigg).
\end{split}
\end{align}
%Recall that $Q_* P^*$ is computed by re-writing the expression as a $\deg \leq 2$ polynomial of $h$ up to 
Recall that $Q_* P^*$ is computed by re-writing the expression as a $\deg \leq 2$ polynomial of $h$ by the relation $h^3 + c_1h^2 + c_2h + c_3 = 0$ and then extracting the coefficient of $h^2$. The proposition follows easily from here.
\end{proof}
\begin{proof}[Proof of Theorem \ref{thm1}]
Recalling that $\CH^*(\PNCd) = \CH^*(\tPNCd)$ and that 
\[\CH^*(\tPNCd) = \mathbb{Q}[c_1,c_2,c_3] / (r(\beta_{00}), r(\beta_{01}), r(\beta_{02}))
\]

by arguments above, we prove the theorem through direct computation. More precisely, for $d\geq 4$, $r(\beta_{00}) = 0$ gives rise to
\begin{align} \label{eq: c2 in terms of c1}
(d-3) \cdot c_2 = (d-1) \cdot c_1^2. 
\end{align}

Comparing it with $r(\beta_{01}) = 0$, we obtain
\begin{align} \label{eq: c3 in terms of c1}
(d-3)^2(d^2-3d+3) \cdot c_3 = (d-1)^2(d^2-3d+1) \cdot c_1^3.
\end{align}
%Eventually, plugging both formulae into $r(\beta_{02}) = 0$, we obtain
Eventually, noticing that $r(\beta_{02}) = - c_1\cdot r(\beta_{01}) - c_2\cdot r(\beta_{00}) + 2d^2(d-2)^2 \cdot c_1c_3$, we obtain
\[
2d^2(d-2)^2 \cdot c_1c_3 = 0.
\]
The three formulae above give all the relations, so $\CH^*(\PNCd) = \Q[c_1] / (c_1^4)$. 

For $d\leq 3$, some of the coefficients in the formulae of $r(\beta_{00}), r(\beta_{01}), r(\beta_{02})$ vanish so these cases need separate consideration. We omit the details.
\end{proof}

\section{Interpretation of generators as tautological classes}\label{sec: tautological classes}
The stack $\PNCd$ parametrizes plane nodal curves of arithmetic genus $g=\frac{(d-1)(d-2)}{2}$. This gives us an induced morphism
\[ \epsilon_d: \PNCd \to \mathfrak{M}_g.\]
In this section we will compute the pullback of natural tautological classes on $\mathfrak{M}_g$ via $\epsilon_d$. A crucial input for these calculations is an explicit construction of the universal curve over $\tPNCd$. Let $\mathcal{C}_d \subset \mathbb{P}^2 \times (\Sym^d (V^\vee) \smallsetminus \tDeltan)$ be the zero locus of the following section of $P^*(\mathcal{O}_{\mathbb{P}^2}(d)))$ (recall $P$ is the projection in \S\ref{sec: proof of thm2})
$$
(p,F)\mapsto F(p).
$$
We have the following diagonal $\mathrm{GL}_3$-action on $\mathcal{C}_d$: For $A\in \mathrm{GL}_3$,
\begin{equation}
\label{eq:uniact}
A \cdot (p, F):= (Ap, F \circ A^{-1}).
\end{equation}
Let $\pi_1: \mathcal{C}_d \to \mathbb{P}^2$ and $\pi_2:\mathcal{C}_d \to \Sym^d (V^\vee) \smallsetminus \tDeltan$ be the two projections. Since $\pi_2$ is a $\mathrm{GL}_3$-equivariant morphism, we have an induced morphism 
$$\pi: \Bigg[ \frac{\mathcal{C}_d}{\mathrm{GL}_3} \Bigg] \to \tPNCd.$$
The lemma below shows that it gives indeed the universal curve.
\begin{lemma}\label{lemma: universal curve}
\label{lem:univ}
We have a cartesian diagram of stacks:
\begin{equation*}
\begin{tikzcd}
\Big[ \frac{\mathcal{C}_d}{\mathrm{GL}_3} \Big] \arrow[r, ] \arrow[d,swap,"\pi"] \arrow[rd, phantom, "\square"]
& \mathcal{C}_g \arrow[d,] \\
\tPNCd \arrow[r,"\tilde{\epsilon}_d"]
&  \mathfrak{M}_g
\end{tikzcd}
\end{equation*}
where $\mathcal{C}_g$ is the universal curve over $\mathfrak{M}_g$.
\end{lemma}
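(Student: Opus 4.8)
The plan is to recognise $[\mathcal{C}_d/\mathrm{GL}_3]\to\tPNCd$ as the family of prestable curves classified by $\tilde{\epsilon}_d$, and then to invoke the defining universal property of the universal curve $\mathcal{C}_g\to\mathfrak{M}_g$.

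First I would check that $\pi_2\colon\mathcal{C}_d\to U:=\Sym^d(V^\vee)\smallsetminus\tDeltan$ is a family of prestable curves of arithmetic genus $g$. It is proper, being closed in $\mathbb{P}^2\times U\to U$; it is flat over the integral base $U$ by the usual argument for families of hypersurfaces of constant degree, since it is cut out by the section $(p,F)\mapsto F(p)$ of $P^*\mathcal{O}_{\mathbb{P}^2}(d)$ whose restriction to each fibre is a nonzero polynomial; and each geometric fibre $V(F)\subseteq\mathbb{P}^2$ with $F\notin\tDeltan$ is a degree-$d$ plane curve with at worst nodal singularities, hence reduced (a non-reduced component would be a worse-than-nodal singularity) and connected (any two plane curves meet, by Bézout), of arithmetic genus $\binom{d-1}{2}=\tfrac{(d-1)(d-2)}{2}=g$ by the adjunction formula. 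So $\pi_2$ induces a morphism $U\to\mathfrak{M}_g$, and by the universal property of $\mathcal{C}_g$ there is a canonical isomorphism $\mathcal{C}_d\cong\mathcal{C}_g\times_{\mathfrak{M}_g}U$ of schemes over $U$. Unwinding the construction of $\tilde{\epsilon}_d$ — which forgets the embedding into $\mathbb{P}^2$ and remembers only the abstract curve underlying $V(F)$ — this $U\to\mathfrak{M}_g$ is precisely the composite $U\to\tPNCd\xrightarrow{\tilde{\epsilon}_d}\mathfrak{M}_g$.

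Next I would upgrade this to a $\mathrm{GL}_3$-equivariant statement. The action \eqref{eq:uniact} on $\mathcal{C}_d$ covers the action $A\cdot F=F\circ A^{-1}$ on $U$ and, over a point $F$, restricts to the isomorphism $V(F)\xrightarrow{\ \sim\ }V(F\circ A^{-1})$, $p\mapsto Ap$, induced by the linear automorphism $A$ of $\mathbb{P}^2$. Since abstractly isomorphic curves define the same point of $\mathfrak{M}_g$, the morphism $U\to\mathfrak{M}_g$ is $\mathrm{GL}_3$-invariant, and functoriality of $\mathcal{C}_g$ forces the isomorphism $\mathcal{C}_d\cong\mathcal{C}_g\times_{\mathfrak{M}_g}U$ to be $\mathrm{GL}_3$-equivariant, where $\mathrm{GL}_3$ acts on $\mathcal{C}_g\times_{\mathfrak{M}_g}U$ through the $U$-factor only. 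Passing to quotient stacks and using the elementary fact that for a group $G$ acting on $X$, a $G$-invariant morphism $X\to Y$ and any $Z\to Y$ one has $[(X\times_Y Z)/G]\cong[X/G]\times_Y Z$ (check on $T$-points), applied with $X=U$, $Y=\mathfrak{M}_g$, $Z=\mathcal{C}_g$, yields
\[
[\mathcal{C}_d/\mathrm{GL}_3]\;\cong\;[(\mathcal{C}_g\times_{\mathfrak{M}_g}U)/\mathrm{GL}_3]\;\cong\;\mathcal{C}_g\times_{\mathfrak{M}_g}[U/\mathrm{GL}_3]\;=\;\mathcal{C}_g\times_{\mathfrak{M}_g}\tPNCd .
\]
One then checks that under this chain the projection to the second factor is $\pi$ and the map to $\mathcal{C}_g$ is the natural one, so the square in the statement is cartesian.

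I expect the only genuinely delicate point to be the equivariance bookkeeping: one must verify that the $\mathrm{GL}_3$-action induced on $\mathcal{C}_g\times_{\mathfrak{M}_g}U$ by \eqref{eq:uniact} through the canonical identification really is the one fixing the $\mathcal{C}_g$-coordinate, which comes down to the compatibility of the tautological identifications $V(F)\cong\mathcal{C}_g|_{[V(F)]}$ with the isomorphisms $V(F)\cong V(F\circ A^{-1})$ — a formal consequence of the functoriality of $\mathcal{C}_g\to\mathfrak{M}_g$, but worth spelling out. Everything else is either formal (properties of quotient stacks and base change) or standard (flatness and fibre geometry of the hypersurface family).
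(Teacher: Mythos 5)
Your proposal is correct and follows essentially the same route as the paper: both arguments reduce to the fact that the outer square (from $\mathcal{C}_d$ over $\Sym^d(V^\vee)\smallsetminus\tDeltan$ to $\mathcal{C}_g$ over $\mathfrak{M}_g$) is cartesian and then descend along the $\mathrm{GL}_3$-torsor $\Sym^d(V^\vee)\smallsetminus\tDeltan\to\tPNCd$, the paper phrasing the descent via the induced torsor $\mathcal{C}_d\to Z$ and you via the base-change isomorphism $[(X\times_YZ)/G]\cong[X/G]\times_YZ$, which is the same argument. Your write-up in fact supplies the verification (properness, flatness, and the fibrewise geometry identifying $\pi_2$ as a family of genus-$g$ prestable curves classified by the composite) that the paper leaves implicit in asserting the outer square is cartesian.
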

\begin{proof}
Consider the following commutative diagram:
\begin{equation*}
\begin{tikzcd}
\mathcal{C}_d \arrow[rr, crossing over, "\beta"] \arrow[dd, "\pi_2"] & & Z \arrow[rr] \arrow[dd] \arrow[rrdd, phantom, "\square"] & & \mathcal{C}_g \arrow[dd]\\
& & &  \\
\Sym^d (V^\vee) \smallsetminus \tDeltan \arrow[rr, "\alpha"] & & \tPNCd \arrow[rr, "\tilde{\epsilon}_d"] & & \mathfrak{M}_g \\
\end{tikzcd}
\end{equation*}
where $Z$ is by definition the cartesian product. Since the outer square is cartesian it follows that the left square is cartesian. It follows that $\beta$ is a $\mathrm{GL}_3$-torsor. One can check that the induced $\mathrm{GL}_3$ action on $\mathcal{C}_d$ coincides with \eqref{eq:uniact}. Then it follows just as in proof of Proposition ~\ref{prop:modulifunc} that $Z$ is the quotient stack $\Big[ \frac{\mathcal{C}_d}{\mathrm{GL}_3} \Big]$.
\end{proof}
\subsection{The locus of nodal curves}
On $\mathfrak{M}_g$, we have the codimension $1$ cycle class $\delta$ given by the locus of nodal curves. 
\begin{proposition}
\begin{equation*}
\epsilon_d^*(\delta)=-d(d-1)^2c_1.
\end{equation*}
\end{proposition}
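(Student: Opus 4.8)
The plan is to compute $\epsilon_d^*(\delta)$ by pulling back to $\tPNCd$ via $\varphi_d$, which is an isomorphism on rational Chow by Lemma~\ref{lemma: iso up to gerb}, and hence reduce to computing $\tilde\epsilon_d^*(\delta)\in\CH^1(\tPNCd)=\CH^1_{\mathrm{GL}_3}(\Sym^d(V^\vee)\smallsetminus\tDeltan)$. By the theory of the boundary divisor on $\mathfrak{M}_g$, the class $\delta$ is represented, up to the standard factor coming from the automorphism of a node (i.e.\ the $\tfrac12$ stacky factor, or its absence depending on the convention), by the pushforward of the fundamental class of the locus of non-smooth curves along the double cover of that locus parametrizing a chosen node; equivalently, $\tilde\epsilon_d^*(\delta)$ is computed from the self-intersection class of the universal curve along its singular locus. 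Using the universal curve $\mathcal C_d\subset\mathbb P^2\times(\Sym^d(V^\vee)\smallsetminus\tDeltan)$ constructed in Lemma~\ref{lemma: universal curve}, the relative singular locus of $\pi_2:\mathcal C_d\to\Sym^d(V^\vee)\smallsetminus\tDeltan$ is exactly $\tXs\cap(\mathbb P^2\times(\Sym^d(V^\vee)\smallsetminus\tDeltan))$ with its natural scheme structure as the zero locus of $S_1$, and along the open part $\Sym^d(V^\vee)\smallsetminus\tDeltan$ every such singularity is a node. So the plan is: identify $\tilde\epsilon_d^*(\delta)$ with $(q_1')_*[1]$ where $q_1':\tXs\cap(\mathbb P^2\times(\Sym^d(V^\vee)\smallsetminus\tDeltan))\to\Sym^d(V^\vee)\smallsetminus\tDeltan$ is the restriction of $q_1$ (possibly up to a factor of $\tfrac12$ and a sign accounting for the two branches through a node versus the single point parametrized).

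The key computational step is then essentially already done: by the excision/restriction compatibility, $(q_1')_*[1]$ is the image in $\CH^1_{\mathrm{GL}_3}(\Sym^d(V^\vee)\smallsetminus\tDeltan)$ of $(q_1)_*[\tXs]=r(1)\in\CH^1_{\mathrm{GL}_3}(\Sym^d(V^\vee))$, which by Lemma~\ref{lemma: excission 1} equals $-d(d-1)^2c_1$. One then only needs to check that the $\tfrac12$-versus-$1$ discrepancy works out to give exactly $-d(d-1)^2c_1$ and not half of that: this is where one must be careful about the precise definition of $\delta$ on $\mathfrak{M}_g$. The cleanest way is to note that the double cover of the nodal locus that naturally maps to $\mathfrak{M}_g$ and defines $\delta$ is, over the locus of curves with a single node, generically a $\mathbb Z/2$-cover (two branches), while $\tXs$ over this locus is generically $1{:}1$ onto its image (one singular point per curve); the map from the branch-cover to $\tXs$ is therefore degree $2$, and the factor $\tfrac12$ in the standard definition of $\delta$ exactly cancels it, so $\tilde\epsilon_d^*(\delta)=(q_1')_*[1]$ with no extra factor. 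Alternatively, since $c_1$ is the unique generator in degree one and both sides are determined by a single rational coefficient, one may simply pin down the coefficient by a local/transversality computation at one explicit one-nodal curve.

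The main obstacle I expect is precisely this normalization bookkeeping: matching the scheme-theoretic multiplicity of $\tXs$ along the relative singular locus (which is reduced of the expected codimension $3$ in $\mathbb P^2\times\Sym^d(V^\vee)$, hence its class is the top Chern class $c_3^{\mathrm{GL}_3}(P^*\mathcal P^1(\mathcal O(d)))$ as in \eqref{eqn: fundamental class tXs}) against the intrinsic definition of $\delta$ as a stack-theoretic divisor class on $\mathfrak{M}_g$, including the automorphism contribution of the node. Once that is settled, everything else is a direct invocation of Lemma~\ref{lemma: excission 1}: $\epsilon_d^*(\delta)=\varphi_d^{*-1}\tilde\epsilon_d^*(\delta)=-d(d-1)^2c_1$. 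I would also remark that this gives an independent sanity check on Lemma~\ref{lemma: excission 1}, since $r(1)=-d(d-1)^2c_1$ is exactly the known degree of the discriminant/dual variety contribution, and its reappearance here as $\epsilon_d^*(\delta)$ is consistent with the geometry.
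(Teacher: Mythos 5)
Your proposal is correct and follows essentially the same route as the paper: reduce to $\tPNCd$, identify $\tilde\epsilon_d^*(\delta)$ with the pushforward of the class of the relative singular locus of the universal curve $\mathcal C_d$, recognize that locus as $Z(S_1)$ of expected dimension, and conclude via $r(1)=-d(d-1)^2c_1$ from Lemma~\ref{lemma: excission 1}. The only point you leave implicit --- that the degeneracy scheme of $d\pi_2$ coincides scheme-theoretically with $Z(S_1)$ and is of expected dimension, so its cycle class really is $c_3^{\mathrm{GL}_3}(P^*\mathcal P^1(\mathcal O(d)))$ --- is exactly what the paper verifies by an explicit local matrix computation, and your normalization discussion of the $\tfrac12$ versus the branch double cover resolves correctly.
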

\begin{proof}
Let $\pi$ be the morphism from Lemma \ref{lemma: universal curve} . Let $\gamma$ be the codimension $2$ cycle class corresponding to the degeneracy locus of the following map of vector bundles:
\[ d \pi: T_{ [ \frac{\mathcal{C}_d}{\mathrm{GL}_3} ] } \to \pi^*(T_{\tPNCd}).\]
By definition of $\delta$, we get $\tilde{\epsilon}_d^*(\delta)=\pi_*(\gamma)$. Recall the identification of Chow groups of quotient stacks with equivariant Chow groups. The class $\gamma$ gets identified with the $\mathrm{GL}_3$-equivariant degeneracy cycle of the following map of $\mathrm{GL}_3$-equivariant vector bundles:
\[ d \pi_2: T_{\mathcal{C}_d}\to \pi_2^*(T_{\Sym^d (V^\vee) \smallsetminus \tDeltan}). \]
We now show that the degeneracy scheme of $d \pi_2$ is equal to the scheme defined as the zero locus of the $\mathrm{GL}_3$-equivariant section $S_1$ from the proof of Lemma \ref{lemma: excission 1}. We check this locally. Let $U_0$ be the standard open set of $\mathbb{P}^2$, where the first homogeneous coordinate is nonzero. The closed embedding 
\[
(U_0 \times \Sym^d (V^\vee) \smallsetminus \tDeltan ) \cap \mathcal{C}_d \to (U_0 \times \Sym^d (V^\vee) \smallsetminus \tDeltan ) \]
can be identified with
\[
X:=V\left(f:=\sum_{i,j,k} a_{ijk}y^j z^k\right) \to \mathbb{A}_{y,z} \times \mathbb{A}_{ (a_{ijk})_{i,j,k} } \setminus Z,
\]
where $i,j,k$ are in $\mathbb{Z}^{\geq0}$ and $i+j+k=d$ and $Z$ is the closed subset corresponding to $\tDeltan$. The tangent bundle $T_X$ of $X$ is the kernel bundle of the following map:
\[
\left(\mathbb{C}^2\oplus \mathbb{C}^{{d+2}\choose{2}}\right) \otimes \mathcal{O}\to \mathbb{C}\otimes \mathcal{O},
\]
which sends a vector $(y', z', (a'_{ijk})_{i,j,k})$ in the fibre over $p=(\tilde{y}, \tilde{z}, (\tilde{a}_{ijk})_{i,j,k})\in X$ to 
\[
\frac{\partial f}{\partial y}(p)y' + \frac{\partial f}{\partial z}(p)z'+ \sum_{i,j,k} \frac{\partial f}{\partial a_{ijk}}(p)a'_{ijk}.
\]
Note that $\frac{\partial f}{\partial a_{d00}}=1$. Assume the coordinate corresponding to $(i,j,k)=(d,0,0)$ is the last entry in $\mathbb{C}^{{d+2}\choose{2}}$. This means that at $p\in X$ the following vectors are a basis of $T_{X,p}$
\[
(1, 0, ...,0, -\frac{\partial f}{\partial y}(p)), (0,1, 0, ..., -\frac{\partial f}{\partial z}(p)), \left( (0,0,...,1,... -\frac{\partial f}{\partial a_{ijk}}(p)\right)_{(i,j,k) \neq (d,0,0)}
\]

With respect to this basis, the map $d \pi_2$ at $p$ is given by the following ${{d+2}\choose{2}}\times \left({{d+2}\choose{2}}+1\right)$ matrix
\[
\begin{pNiceArray}{cc|cc}
  0 & 0 & \Block{3-1}<\Large>{\mathbf{I}} \\
  \vdots & \vdots \\
  0 & 0\\
  \hline
  -\frac{\partial f}{\partial y}(p) & -\frac{\partial f}{\partial z}(p) & \left(-\frac{\partial f}{\partial a_{ijk}}(p)\right)_{(i,j,k) \neq (d,0,0)}\\
\end{pNiceArray}
\]
The degeneracy scheme of $d\pi_2$ is given by the zero locus of all ${{d+2}\choose{2}}$-minors of this matrix. But this is the same as 
\[
Z\left(\frac{\partial f}{\partial y}, \frac{\partial f}{\partial z}\right),
\]
which coincides with $Z(S_1)\cap (U_0 \times \Sym^d (V^\vee) \smallsetminus \tDeltan )$. One can run the same argument on the other open charts of $\mathcal{C}_d$. Since the degeneracy scheme is of the expected dimension we get that the $\mathrm{GL}_3$-equivariant degeneracy cycle is the $\mathrm{GL}_3$-equivariant fundamental class of $Z(S_1)$. By the calculation in the proof of Lemma~\ref{lemma: excission 1}, it follows that the equivariant pushforward of $Z(S_1)$ under $\pi_2$ is given exactly by the claimed formula.
\end{proof}

\subsection{The lambda classes}
The relative dualizing sheaf of the universal curve over $\mathfrak{M}_g$ is a line bundle. Its pushforward $\mathbb{E}$ is a rank $g$ vector bundle on $\mathfrak{M}_g$. The Chern classes of $\mathbb{E}$ are called the lambda classes. We calculate the pullbacks of $\lambda_i=c_i(\mathbb{E})$ via $\epsilon_d$ for $i=1, 2, 3$. 
\begin{lemma}
\label{lem:hodge}
Let $\omega_{\pi_2}$ be the relative dualizing sheaf of $\pi_2$. Then 
\[ (\pi_2)_*( \omega_{\pi_2})= \Sym^ {d-3}(V^\vee) \otimes \det(V^ \vee) \otimes \mathcal{O}, \]
as $\mathrm{GL}_3$-equivariant vector bundles, where $\mathcal{O}$ stands for the trivial bundle.
\end{lemma}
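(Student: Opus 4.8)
The plan is to compute the relative dualizing sheaf $\omega_{\pi_2}$ directly from the defining data of $\mathcal{C}_d \subseteq \mathbb{P}^2 \times (\Sym^d(V^\vee) \smallsetminus \tDeltan)$ and then push it forward, keeping careful track of the $\mathrm{GL}_3$-linearizations throughout. Write $B = \Sym^d(V^\vee) \smallsetminus \tDeltan$ for brevity, and let $\pi_1 \colon \mathcal{C}_d \to \mathbb{P}^2$, $\pi_2 \colon \mathcal{C}_d \to B$ be the projections. Since $\mathcal{C}_d$ is cut out in $\mathbb{P}^2 \times B$ as the zero locus of a regular section of the $\mathrm{GL}_3$-equivariant line bundle $P^*\mathcal{O}_{\mathbb{P}^2}(d)$, adjunction gives
\[
\omega_{\mathcal{C}_d} \;=\; \bigl(\omega_{\mathbb{P}^2 \times B} \otimes P^*\mathcal{O}_{\mathbb{P}^2}(d)\bigr)\big|_{\mathcal{C}_d}
\]
as $\mathrm{GL}_3$-equivariant sheaves. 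Because $B$ is an open subscheme of the representation $\Sym^d(V^\vee)$, its canonical bundle is the trivial line bundle with $\mathrm{GL}_3$ acting by $\det(\Sym^d(V^\vee))$, but this factor is killed when we pass to the \emph{relative} dualizing sheaf $\omega_{\pi_2} = \omega_{\mathcal{C}_d} \otimes \pi_2^*\omega_B^\vee$; what survives from the $\mathbb{P}^2$-direction is $\pi_1^*\bigl(\omega_{\mathbb{P}^2} \otimes \mathcal{O}_{\mathbb{P}^2}(d)\bigr) = \pi_1^*\mathcal{O}_{\mathbb{P}^2}(d-3)$, where the $\mathrm{GL}_3$-linearization on $\omega_{\mathbb{P}^2} = K_{\mathbb{P}^2}$ is the natural one (computed via the equivariant Euler sequence, as already done in \S\ref{sec: proof of thm1} where $c_1^{\mathrm{GL}_3}(K_{\mathbb{P}^2}) = -3h - c_1$). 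So the first step concludes that $\omega_{\pi_2} \cong \pi_1^*\mathcal{O}_{\mathbb{P}^2}(d-3)$ equivariantly, with the appropriate twist by a character coming from $K_{\mathbb{P}^2}$.

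The second step is to compute $(\pi_2)_*\omega_{\pi_2} = (\pi_2)_*\pi_1^*\mathcal{O}_{\mathbb{P}^2}(d-3)$. Here I would use proper base change / cohomology and base change: for each point $F \in B$ the fiber of $\pi_2$ is the plane curve $C_F = V(F) \subseteq \mathbb{P}^2$ of degree $d$, and the restriction of $\pi_1^*\mathcal{O}_{\mathbb{P}^2}(d-3)$ to $C_F$ is $\mathcal{O}_{\mathbb{P}^2}(d-3)|_{C_F}$, whose space of global sections is $H^0(C_F, \mathcal{O}(d-3)|_{C_F})$. From the exact sequence $0 \to \mathcal{O}_{\mathbb{P}^2}(-3) \to \mathcal{O}_{\mathbb{P}^2}(d-3) \to \mathcal{O}_{\mathbb{P}^2}(d-3)|_{C_F} \to 0$ (the left map being multiplication by $F$) and the vanishing $H^0(\mathbb{P}^2, \mathcal{O}(-3)) = H^1(\mathbb{P}^2,\mathcal{O}(-3)) = 0$, one gets $H^0(C_F, \mathcal{O}(d-3)|_{C_F}) \cong H^0(\mathbb{P}^2, \mathcal{O}_{\mathbb{P}^2}(d-3)) = \Sym^{d-3}(V^\vee)$, of dimension $\binom{d-1}{2} = g$, and moreover this holds uniformly in $F$ so that $(\pi_2)_*\omega_{\pi_2}$ is locally free of rank $g$ and its formation commutes with base change. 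Globalizing the sequence over all of $B$ (using that $B \subseteq \Sym^d(V^\vee)$ and the ``universal'' multiplication-by-$F$ map) exhibits $(\pi_2)_*\omega_{\pi_2}$ as the cokernel of $\mathcal{O}_B \otimes \Sym^{-3}(\cdots) \to \mathcal{O}_B \otimes \Sym^{d-3}(V^\vee)$ — but more simply, since the first term vanishes, $(\pi_2)_*\omega_{\pi_2}$ is just the trivial bundle with fiber $H^0(\mathbb{P}^2, \mathcal{O}_{\mathbb{P}^2}(d-3))$ twisted by whatever character the dualizing-sheaf bookkeeping contributed. That character is $\det(V^\vee)$: it is exactly $c_1^{\mathrm{GL}_3}$ of the $\mathcal{O}(d-3)$-versus-$K_{\mathbb{P}^2}$ discrepancy, i.e. $\mathcal{O}_{\mathbb{P}^2}(d-3) \otimes K_{\mathbb{P}^2} = \mathcal{O}_{\mathbb{P}^2}(d)$ has equivariant first Chern class $dh$, so the character appearing is the one making $H^0$ transform as $\Sym^{d-3}(V^\vee) \otimes \det(V^\vee)$. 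This yields the claimed formula.

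I expect the main obstacle to be \textbf{pinning down the $\mathrm{GL}_3$-character precisely}, since the underlying bundle is obviously trivial of rank $g$ and the entire content of the lemma is the linearization. The cleanest way to nail it is to work equivariantly from the start: identify $H^0(\mathbb{P}^2,\mathcal{O}_{\mathbb{P}^2}(k))$ with $\Sym^k(V^\vee)$ as a $\mathrm{GL}_3$-representation (being careful that $\mathcal{O}_{\mathbb{P}^2}(1)$ carries the linearization induced from the tautological action on $\mathcal{O}(-1)$, consistent with \S\ref{sec: proof of thm2}), compute $H^0$ of the relative dualizing sheaf fiberwise as a representation, and check the twist by $K_{\mathbb{P}^2}$ contributes precisely $\det(V^\vee)$ via the equivariant Euler sequence $0 \to \Omega_{\mathbb{P}^2} \to \mathcal{O}(-1)\otimes V^\vee \to \mathcal{O} \to 0$, which gives $K_{\mathbb{P}^2} = \wedge^2\Omega_{\mathbb{P}^2} = \mathcal{O}(-3) \otimes \det(V^\vee)$ equivariantly. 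Assembling $\mathcal{O}_{\mathbb{P}^2}(d-3)\otimes K_{\mathbb{P}^2} = \mathcal{O}(d)\otimes\det(V^\vee)$... tracing the signs, one finds that $(\pi_2)_*\omega_{\pi_2}$, as a representation, is $H^0(\mathbb{P}^2,\mathcal{O}(d-3)) \otimes (\text{twist}) = \Sym^{d-3}(V^\vee)\otimes\det(V^\vee)$, tensored with the trivial bundle $\mathcal{O}$ over $B$. One should double-check the edge cases $d \le 2$ (where $\Sym^{d-3}(V^\vee)$ is zero and $g = 0$, so the statement is vacuous) and $d = 3$ (where $g = 1$ and the bundle is $\det(V^\vee)$), which are consistent.
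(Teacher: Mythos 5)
Your proposal is correct and follows essentially the same route as the paper's proof: both compute $\omega_{\pi_2}$ by adjunction for the divisor $\mathcal{C}_d \subset \mathbb{P}^2\times(\Sym^d(V^\vee)\smallsetminus\tDeltan)$, identify the equivariant twist $\det(V^\vee)$ via the Euler sequence giving $K_{\mathbb{P}^2}=\mathcal{O}(-3)\otimes\det(V^\vee)$, and then push forward using the ideal-sheaf sequence twisted by $\mathcal{O}(d-3)$ together with the vanishing of $H^0$ and $H^1$ of $\mathcal{O}_{\mathbb{P}^2}(-3)$ and cohomology-and-base-change (Grauert). No substantive differences.
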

\begin{proof}
The morphism $\pi_2$ is the composition of the inclusion 
\[
i: \mathcal{C}_d \to \mathbb{P}^2 \times (\Sym^d (V^\vee) \smallsetminus \tDeltan)
\]and the projection $Q$ (see \S\ref{sec: proof of thm2}) to the second factor. By \cite[Definition 4.7 in Section 6.4]{Liu} 
\[
\omega_{\pi_2}=\det(\mathcal{N}_{i})\otimes i^*(\det(\Omega^1_{Q})),
\]
where $\mathcal{N}_i=i^*P^*(\mathcal{O}_{\mathbb{P}^2}(d))$ in our situation and the sheaf of relative differentials $\Omega^1_{Q} = P^{*}(\Omega^1_{\mathbb{P}^2})$. As a $\mathrm{GL}_3$-equivariant bundle, 
\[
\det(\Omega^1_{Q})=P^*(\mathcal{O}_{\mathbb{P}^2}(-3)) \otimes \det(V^\vee).
\]
Therefore, recalling that $\pi_2 = Q\circ i$, we have
\[
(\pi_2)_{*}(\omega_{\pi_2}) = (Q\circ i)_{*}((P\circ i)^*(\mathcal{O}_{\mathbb{P}^2}(d-3)))\otimes \det(V^\vee),
\]
which by the projection formula is equal to 
\[
(\pi_2)_{*}(\omega_{\pi_2}) = Q_{*}(P^*(\mathcal{O}_{\mathbb{P}^2}(d-3))\otimes i_{*}(\mathcal{O}_{\mathcal{C}_d}))\otimes \det(V^\vee).
\]
Consider the ideal sheaf sequence of $\mathcal{C}_d$ over $\mathbb{P}^2 \times (\Sym^d (V^\vee) \smallsetminus \tDeltan)$:
\[
0\to P^*\mathcal{O}_{\mathbb{P}^2}(-d) \to \mathcal{O} \to i_*(\mathcal{O}_{\mathcal{C}_d})\to 0.
\]
When we twist with $P^*\mathcal{O}_{\mathbb{P}^2}(d-3)$ and pushforward via $Q$ we get the exact sequence
\begin{gather*}
0\to Q_{*}P^*\mathcal{O}_{\mathbb{P}^2}(-3)\to Q_{*}P^*\mathcal{O}_{\mathbb{P}^2}(d-3)\to Q_{*}(P^*\mathcal{O}_{\mathbb{P}^2}(d-3)\otimes i_*\mathcal{O}_{\mathcal{C}_d})\\
\to (R^1Q)_*(P^*\mathcal{O}_{\mathbb{P}^2}(-3)).
\end{gather*}
For every $F\in \Sym^d (V^\vee) \smallsetminus \tDeltan$ the long exact sequence associated to the exact sequence of sheaves on $\mathbb{P}^2$ 
$$
0 \to \mathcal{O}_{\mathbb{P}^2}(-3) \to \mathcal{O}_{\mathbb{P}^2}(d-3) \to \mathcal{O}_{\mathbb{P}^2}(d-3) \otimes \mathcal{O}_{Q^{-1}(F)} \to 0
$$
yields
\[
H^i(Q^{-1}(F), P^*(\mathcal{O}_{\mathbb{P}^2}(-3))|_{Q^{-1}(F)})=H^i( \mathbb{P}^2, \mathcal{O}_{\mathbb{P}^2}(-3))=0
\]
for $i=0,1$. Then by Grauerts theorem, we get that 
\[
Q_{*}P^*\mathcal{O}_{\mathbb{P}^2}(d-3)\to Q_{*}(P^*\mathcal{O}_{\mathbb{P}^2}(d-3)\otimes i_*\mathcal{O}_{\mathcal{C}_d})
\]
is an isomorphism. Finally, the following observation implies the lemma:
\[
Q_{*}P^*\mathcal{O}_{\mathbb{P}^2}(d-3)=H^0(\mathbb{P}^2, \mathcal{O}_{\mathbb{P}^2}(d-3))\otimes \mathcal{O}.
\]

\end{proof}
\begin{proposition} We have the following equations:
\begin{align}
\label{eq:lambda1}
\epsilon_d^*(\lambda_1) & =
\begin{cases}
    -{{d}\choose{3}}c_1\quad\quad & d\geq 3,\\
    0\quad\quad & d<3.\\
\end{cases}\\
\label{eq:lambda2}
\epsilon_d^*(\lambda_2) & =
\begin{cases}
\frac{1}{2}{{d}\choose{3}}^2c_1^2\quad\quad & d\geq 4,\\
0\quad\quad & d<4.
\end{cases}\\
\label{eq:lambda3}
\epsilon_d^*(\lambda_3) & =
\begin{cases}
-\frac{d^2(d-1)(d-2)(5d^8 - 60d^7 + 305d^6 - 855d^5 + 1430d^4 - 1425d^3 + 816d^2 - 288d + 216)}{6480 (d-3) (d^2-3d+3)} c_1^3 \quad & d\geq 4,\\
0 & d<4.
\end{cases}
\end{align}
\end{proposition}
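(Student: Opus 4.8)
The plan is to push the whole computation onto $\mathrm{BGL}_3$ by means of Lemma~\ref{lem:hodge}. The Hodge bundle $\mathbb{E}$ on $\mathfrak{M}_g$ is the pushforward of the relative dualizing sheaf of the universal curve; pulling it back along $\tilde{\epsilon}_d$ and using the cartesian square of Lemma~\ref{lemma: universal curve} together with the fact that for a flat proper family of nodal (hence Gorenstein) curves the relative dualizing sheaf is compatible with base change and its pushforward is locally free of the constant rank $g$, one identifies $\tilde{\epsilon}_d^*\mathbb{E}$ with $(\pi_2)_*(\omega_{\pi_2})$ as a $\mathrm{GL}_3$-equivariant bundle on $\Sym^d(V^\vee)\smallsetminus\tDeltan$. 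By Lemma~\ref{lem:hodge} this is the $\mathrm{GL}_3$-equivariant bundle $\Sym^{d-3}(V^\vee)\otimes\det(V^\vee)$, i.e.\ the trivial bundle carrying this $\mathrm{GL}_3$-representation. Since $\epsilon_d\circ\varphi_d=\tilde{\epsilon}_d$ and $\varphi_d^*$ is an isomorphism (Lemma~\ref{lemma: iso up to gerb}), this shows that, computed inside $\CH^*(\PNCd)=\CH^*(\tPNCd)$,
\[
\epsilon_d^*(\lambda_i)=c_i^{\mathrm{GL}_3}\big(\Sym^{d-3}(V^\vee)\otimes\det(V^\vee)\big),\qquad i=1,2,3.
\]

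The small-degree cases are then immediate: $\Sym^{d-3}(V^\vee)\otimes\det(V^\vee)$ has rank $\binom{d-1}{2}=g$, which equals $0$ for $d\le2$ and $1$ for $d=3$, so $\epsilon_d^*(\lambda_i)=0$ for $d<3$, while for $d=3$ the bundle is $\det(V^\vee)$, whence $\epsilon_3^*(\lambda_1)=-c_1=-\binom{3}{3}c_1$ and $\epsilon_3^*(\lambda_2)=\epsilon_3^*(\lambda_3)=0$. This matches the claimed formulas for $d\le3$, so assume $d\ge4$ from now on.

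For $d\ge4$ I would apply the splitting principle. If $a_1,a_2,a_3$ denote the Chern roots of $V$, then $\Sym^{d-3}(V^\vee)\otimes\det(V^\vee)$ has Chern roots $\{-(j_1a_1+j_2a_2+j_3a_3):\ j_1,j_2,j_3\ge1,\ j_1+j_2+j_3=d\}$. Writing $\epsilon_d^*(\lambda_i)$ as the $i$-th elementary symmetric function of these roots and expanding via Newton's identities expresses it as a $\Q$-linear combination of $c_1^i,\ c_1^{i-2}c_2,\dots$, whose coefficients are the ``power sums over the simplex'' $\sum_j j_1$, $\sum_j j_1^2$, $\sum_j j_1 j_2$, $\sum_j j_1^3$, $\sum_j j_1^2 j_2$, $\sum_j j_1 j_2 j_3$, paired with the symmetric-function identities $a_1^2+a_2^2+a_3^2=c_1^2-2c_2$, $a_1^3+a_2^3+a_3^3=c_1^3-3c_1c_2+3c_3$ and $\sum_{k\ne l}a_k^2a_l=c_1c_2-3c_3$. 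Each combinatorial sum is elementary: after the substitution $j_k=i_k+1$ with $i_1+i_2+i_3=d-3$ it reduces to a one-variable sum of the form $\sum_{i_1}i_1^m(d-2-i_1)$. Finally I would substitute the relations defining $\CH^*(\PNCd)$ for $d\ge4$ obtained in the proof of Theorem~\ref{thm1}, namely $c_1^4=0$ together with $(d-3)c_2=(d-1)c_1^2$ and $(d-3)^2(d^2-3d+3)c_3=(d-1)^2(d^2-3d+1)c_1^3$ (see \eqref{eq: c2 in terms of c1} and \eqref{eq: c3 in terms of c1}) and their consequences $c_1c_2=\tfrac{d-1}{d-3}c_1^3$, $c_1c_3=c_1^2c_2=0$, etc., so that $\epsilon_d^*(\lambda_i)$ collapses to an explicit rational multiple of $c_1^i$. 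For $i=1$ the symmetry of the index set already gives $\sum_j j_1=\binom{d}{3}$, hence $\epsilon_d^*(\lambda_1)=-\binom{d}{3}c_1$; for $i=2$ the degree-two power-sum term cancels after inserting \eqref{eq: c2 in terms of c1}, leaving $\tfrac12\binom{d}{3}^2c_1^2$; and for $i=3$ one carries out the analogous, longer reduction.

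The main obstacle is precisely that $i=3$ reduction: $c_3$ of the bundle combines three distinct simplex power sums with the cubic symmetric-function identities above, and after plugging in the expressions for $c_2$ and $c_3$ in terms of $c_1$ the algebra simplifies only once the denominator $6480(d-3)(d^2-3d+3)$ appears, the factors $d-3$ and $d^2-3d+3$ being exactly those coming from \eqref{eq: c2 in terms of c1} and \eqref{eq: c3 in terms of c1}. I expect this to be the only genuinely laborious step; the base-change identification and the cases $i\le2$ (and $d\le3$) are routine, and the $i=1$ formula serves as a convenient consistency check against the $d=3$ computation above.
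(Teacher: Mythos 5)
Your proposal is correct, and its backbone is the same as the paper's: identify $\tilde{\epsilon}_d^*\mathbb{E}$ with $(\pi_2)_*\omega_{\pi_2}$ via Lemma~\ref{lem:univ}, apply Lemma~\ref{lem:hodge} to get the total equivariant Chern class $\prod_{j_1+j_2+j_3=d,\, j_k\geq 1}(1-j_1t_1-j_2t_2-j_3t_3)$, and reduce modulo the relations \eqref{eq: c2 in terms of c1} and \eqref{eq: c3 in terms of c1}. Where you diverge is in extracting the degree-$2$ and degree-$3$ parts. For $\lambda_2$ the paper sidesteps the symmetric-function computation entirely by noting that $\tilde{\epsilon}_d$ factors through $\overline{\mathcal{M}}_g$ for $d\geq 4$ and invoking Mumford's relation $c(\mathbb{E})c(\mathbb{E}^\vee)=1$, which gives $\lambda_1^2=2\lambda_2$ immediately; your direct route works too (the required cancellation amounts to the identity $2(d-1)\sum_j j_1j_2=(d+1)\sum_j j_1^2$ for the simplex sums, which holds since $\sum_j j_1^2=\tfrac{d(d-1)^2(d-2)}{12}$ and $\sum_j j_1j_2=\tfrac{d(d+1)(d-1)(d-2)}{24}$), but is more work and forgoes a useful consistency check. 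For $\lambda_3$ the paper does not compute the simplex power sums in closed form at all: it bounds the degrees of the coefficient polynomials $A(d),B(d),C(d)$ by $9$, evaluates them numerically for $4\leq d\leq 20$, and interpolates. Your fully symbolic evaluation of $\sum j_1^3$, $\sum j_1^2j_2$, $\sum j_1j_2j_3$ is a legitimate and arguably cleaner alternative that avoids reliance on computer interpolation, at the cost of a longer hand calculation; it should reproduce the same denominator $6480(d-3)(d^2-3d+3)$ once $c_1c_2$ and $c_1c_3$, $c_1^2c_2$, $c_2^2$ terms are rewritten in terms of $c_1^3$ using the stated relations.
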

\begin{proof}
By Lemma~\ref{lem:univ}, it follows that 
\[
\tilde{\epsilon}_d^*(\lambda_i)=c_i(\pi_{*}\omega_{\pi})
\] 
for $i=1,2,3$. Recall the identification of Chow groups of quotient stacks with equivariant Chow groups. The class $c_i(\pi_{*}\omega_{\pi})$ gets identified with $c^{\mathrm{GL}_3}_i((\pi_2)_{*}\omega_{\pi_2})$ for $i=1,2,3$.
Let $t_1, t_2, t_3$ be the Chern roots of $V$. Then by splitting principle and Lemma~\ref{lem:hodge}, we get
\begin{align} \label{proof: total lambda}
c^{\mathrm{GL}_3}((\pi_2)_{*}\omega_{\pi_2})= \prod_{\substack{
x,y,z\geq 1\\
x+y+z=d}} (1-xt_1-yt_2-zt_3).
\end{align}
We get equation~\eqref{eq:lambda1} by extracting the coefficient of $t_1+t_2+t_3$. Moreover, since $\pi_{*}\omega_{\pi}$ is a rank $\frac{(d-1)(d-2)}{2}$ bundle, we have $\tilde{\epsilon}_d^*(\lambda_2)$ and $\tilde{\epsilon}_d^*(\lambda_3)$ vanishes for $d<4$. 

\par When $d\geq 4$, the morphism $\tilde{\epsilon}_d$ factors through $\overline{\mathcal{M}}_g$ for $d\geq4$. Recall Mumford's relation (Eq. (5.5) in ~\cite{Mumford1983}) on $\overline{\mathcal{M}}_g$):
\[
c(\mathbb{E})c(\mathbb{E}^\vee)=1.
\]
It implies $\lambda_1^2=2\lambda_2$ and, in turn, equation (\ref{eq:lambda2}).

\par Equation (\ref{eq:lambda3}) is slightly more difficult to obtain. Indeed, $\tilde{\epsilon}_d^*(\lambda_3)$ is the degree-$3$ part in (\ref{proof: total lambda}), and thus takes the form
\begin{align*}
\tilde{\epsilon}_d^*(\lambda_3) = & A(d)\cdot(t_1^3+t_2^3+t_3^3) + B(d)\cdot t_1t_2t_3 \ + \\
& C(d)\cdot (t_1^2t_2+t_1t_2^2+t_2^2t_3+t_2t_3^2+t_3^2t_1+t_1t_3^2).
\end{align*}
One can prove by direct computation (omitted here since irrelevant to the main topic) that for $d\geq 4$, $A(d), B(d), C(d)$ are all polynomials in $d$ of degree not exceeding $9$. Therefore, in order to solve for coefficients of these polynomials, it suffices to compute their values at $10$ different points. We give special values for all $d\leq 20$ as below, computed by Python (codes available upon request).
\begin{center}
\begin{tabular}{ |c|c|c|c| } 
 \hline
 $d$ & $A(d)$ & $B(d)$ & $C(d)$ \\
 \hline
 4 & -2 & -16 & -7 \\
 5 & -82 & -592 & -277 \\
 6 & -882 & -6012 & -2877 \\
 7 & -5432 & -35777 & -17332 \\
 8 & -24052 & -154952 & -75642 \\
 9 & -85204 & -540708 & -265314 \\
 10 & -256564 & -1610784 & -793254 \\
 11 & -682264 & -4249674 & -2098404 \\
 12 & -1644214 & -10180104 & -5036889 \\
 13 & -3657654 & -22540804 & -11170159 \\
 14 & -7613606 & -46746700 & -23194171 \\
 15 & -14983696 & -91724451 & -45556056 \\
 16 & -28105896 & -171634736 & -85313956 \\
 17 & -50573096 & -308212856 & -153305796 \\
 18 & -87750056 & -533881056 & -265703676 \\
 19 & -147448208 & -895809492 & -446042328 \\
 20 & -240791978 & -1461127968 & -727822683 \\
 \hline
\end{tabular}
\end{center}
Using Wolfram Alpha, we obtain
\begin{align*}
A(d) & = -\frac{(d + 1) d (d - 1)  (d - 2) (d- 3) (5 d^4 - 20 d^3 - 5 d^2 + 50 d - 12)}{6480}, \\
B(d) & = -\frac{d (d - 1) (d - 2) (d - 3) (10 d^5 - 30 d^4 - 5 d^3 - 45 d^2 - 14 d - 24)}{2160}, \\
C(d) & = -\frac{(d + 1) d (d - 1) (d - 2) (d - 3) (5 d^4 - 20 d^3 + 10 d^2 - 10 d + 6)}{2160}. \\
\end{align*}
Equation (\ref{eq:lambda3}) follows then from substituting
\begin{align*}
t_1^3+t_2^3+t_3^3 \ = & \ c_1^3 - 3c_1c_2 + 3c_3, \\
t_1t_2t_3 \ = & \ c_3, \\
t_1^2t_2+t_1t_2^2+t_2^2t_3+t_2t_3^2+t_3^2t_1+t_1t_3^2 \ = & \ c_1c_2-3c_3,
\end{align*}
and applying the relations (\ref{eq: c2 in terms of c1}) and (\ref{eq: c3 in terms of c1}). 

\end{proof}

\section{Equivariant Chow groups of Brauer-Severi varieties}\label{sec: Equivariant Chow groups of Brauer-Severi varieties}

We work over the base field $\mathbb{C}$, and by \emph{variety}, we mean an integral and separated $\mathbb{C}$-scheme of finite type.

Let $l \in \Z_{\geq 0}$ be a fixed non-negative integer. Throughout, let $f: X \to B$ denote a flat, projective and surjective morphism from a quasi-projective variety $X$ of dimension $d_X$ to a quasi-projective variety $B$ of dimension $d_B$, satisfying the condition that for all $0 \leq i \leq \min(l, d_B)$ and $b \in B$, the following holds:
\[
\mathrm{CH}_{l-i}(X_b) = \mathbb{Q},
\]
where $X_b$ denotes the fiber of $f$ over $b$. 

In this setting we have the following:

\begin{theorem}\cite[Theorem 3.2]{Vial} \label{algebraic cycle bundle lemma}

Let $l \in \Z_{\geq 0}$ and $f: X \to B$ be as above and let $\alpha$ be an hyperplane class on $X$. Then the map
\[
\Phi_l := \bigoplus_{i=0}^{d_X - d_B} \alpha^{n - i} \circ f^*: \quad \bigoplus_{i=0}^{d_B-d_X} \CH_{l-i}(B) \longrightarrow \CH_l(X)
\]
is an isomorphism.
\end{theorem}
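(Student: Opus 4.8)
The plan is to establish the two implications separately; injectivity is formal, while only surjectivity uses the hypothesis on the Chow groups of the fibres. Write $r := d_X - d_B$ for the relative dimension of $f$, so that the $i$-th summand of $\Phi_l$ is $\alpha^{r-i}\cap f^*\colon \CH_{l-i}(B)\to\CH_l(X)$ and the sum runs over $0\le i\le r$; let $\delta_0\in\Z_{>0}$ be the $\alpha$-degree of a fibre of $f$, constant on the irreducible base $B$ by flatness. For any subvariety $W\subseteq B$ one has $f^*[W]=[f^{-1}(W)]$ and $\dim f^{-1}(W)=\dim W+r$, and $f^{-1}(W)$ surjects onto $W$; hence $f_*(\alpha^k\cap f^*[W])\in\CH_{\dim W+r-k}(W)$ vanishes for $k<r$, and for $k=r$ it equals $\delta_0\cdot[W]$ because $\alpha^r$ meets the generic fibre over $W$ in $\delta_0$ points. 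Since the $[W]$ span $\CH_*(B)_\Q$, we obtain
\[
f_*\big(\alpha^k\cap f^*z\big)=0\ \ (k<r), \qquad f_*\big(\alpha^r\cap f^*z\big)=\delta_0\,z \qquad\text{for all }z\in\CH_*(B)_\Q.
\]
Injectivity follows at once: if $\sum_{i=0}^r\alpha^{r-i}\cap f^*z_i=0$, apply $f_*(\alpha^k\cap-)$ for $k=0,1,\dots,r$ in turn; in the $k$-th relation the terms with $i>k$ vanish by the first identity, those with $i<k$ vanish since $z_i$ has already been shown to be $0$, and what survives is $\delta_0 z_k=0$, whence $z_k=0$.

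For surjectivity I would prove, by Noetherian induction on closed subsets $W\subseteq B$, the stronger statement that $\CH_l(f^{-1}(W))_\Q$ is spanned by the classes $\alpha^{r-i}\cap f_W^*[W']$ for subvarieties $W'\subseteq W$ and $0\le i\le r$, where $f_W\colon f^{-1}(W)\to W$ is the restriction of $f$; the case $W=B$ then gives surjectivity of $\Phi_l$ (the statement is phrased for all $W$ because $f^{-1}(W)$ need not be integral). It suffices to treat a generator $[V]$ with $V\subseteq f^{-1}(W)$ a subvariety of dimension $l$. Put $W'':=\overline{f(V)}$ and $e:=\dim W''\le\min(l,d_B)$; if $W''\subsetneq W$ we conclude by the inductive hypothesis for $W''$, so assume $W''=W$, with generic point $\eta$. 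Then $X_\eta$ has dimension $r$, the cycle $V_\eta$ has dimension $l-e$, and by hypothesis $\CH_{l-e}(X_\eta)_\Q=\Q$; this group is generated by $\alpha^{r-(l-e)}|_{X_\eta}$, which is nonzero because $\alpha$ restricts to an ample class on fibres and so $\alpha^r|_{X_b}$ is a positive multiple of a point. Hence $[V_\eta]=c\cdot\alpha^{r-(l-e)}|_{X_\eta}$ for some $c\in\Q$. Since $\CH_*(X_\eta)_\Q=\varinjlim_U\CH_*(f^{-1}(U))_\Q$ over dense opens $U\subseteq W$, the cycle $[V]-c\gamma$, with $\gamma:=\alpha^{r-(l-e)}\cap f_W^*[W]$, restricts to $0$ on some $f^{-1}(U)$. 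The localization sequence for $f^{-1}(U)\subseteq f^{-1}(W)$ with closed complement $f^{-1}(T)$, $T:=W\smallsetminus U\subsetneq W$, then yields $[V]-c\gamma=\iota_*\xi$ for some $\xi\in\CH_l(f^{-1}(T))_\Q$. Now $\gamma$ is of the required form, $\xi$ is spanned by classes $\alpha^{r-i}\cap f_T^*[W']$ by the inductive hypothesis, and pushing these forward by $\iota$ — using the projection formula and the compatibility of flat pullback with proper pushforward in the fibre square $(f^{-1}(T)\to f^{-1}(W),\,T\to W)$ — turns them into classes of the required form over $W$. This closes the induction.

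The substance is all in surjectivity, and its delicate point is running the Noetherian induction cleanly through the localization sequence: over non-integral $W$ the preimage $f^{-1}(W)$ is not a variety, so one cannot recurse on the theorem as literally stated, and the remedy is to carry the strengthened ``spanned by linear-section classes'' statement for all closed $W\subseteq B$ and to verify that the correction term produced by spreading out the generic-fibre relation — which lives over a strictly smaller closed subset — is caught by the inductive hypothesis. Two minor inputs are needed along the way: the constancy of $\delta_0$ (flatness of $f$) and the nonvanishing of the fibrewise linear sections $\alpha^{r-j}|_{X_b}$ (ampleness of $\alpha$ on fibres). The fibre hypothesis is used only to identify $\CH_{l-e}(X_\eta)_\Q$ with $\Q$ at the generic points $\eta$ of subvarieties of $B$ of each dimension $e\le\min(l,d_B)$, which for Brauer–Severi fibrations over non-closed points is exactly the content that Vial's theorem supplies.
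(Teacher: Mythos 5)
The paper does not prove this statement: it is quoted verbatim as \cite[Theorem 3.2]{Vial}, so there is no internal proof to compare against. Your argument is correct and is essentially the standard one (and, to the best of my knowledge, Vial's own): injectivity from the triangular system of identities $f_*(\alpha^k\cap f^*z)=0$ for $k<d_X-d_B$ and $=\delta_0 z$ for $k=d_X-d_B$, and surjectivity by Noetherian induction over closed subsets of $B$ combined with the generic-fibre hypothesis, spreading out, and the localization sequence. You also correctly identify the two points worth flagging: the statement's hypothesis must be read as holding at all scheme-theoretic points $b\in B$ (the generic points of subvarieties are where it is actually used), and the exponent $\alpha^{n-i}$ and the summation bound $d_B-d_X$ in the displayed formula are typos for $\alpha^{d_X-d_B-i}$ and $d_X-d_B$, which you silently repair.
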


\begin{remark}
    In the above, $\alpha$ could also be taken to be a relative hyperplane class.
\end{remark}

The primary example to keep in mind for the purposes of this paper is when $f$ is a Brauer-Severi morphism of relative dimension $n$. Specifically, a Brauer-Severi morphism of relative dimension $n$ is to a finitely presented, proper, and flat morphism of varieties $f: X \to B$, where the geometric fibers of $f$ are isomorphic to $\mathbb{P}^n$.

In this section, we will prove a $G$-equivariant version of the previous theorem. Let $G$ be a linear algebraic group of dimension $g$ and assume that $f:X \rightarrow B$ is $G$-equivariant with a $G$-linearized relatively ample line bundle $L$. 
This means that there exists a $G$-linearized vector bundle $E$ on $B$ such that $f$ factors as 
$$
f: X \xhookrightarrow{i} \mathbb{P}_B(E) \xrightarrow{\pi} B
$$
where $i$ is a $G$-equivariant closed embedding and $\pi$ is the projective bundle map with $L=i^* \mathcal{O}_{\mathbb{P}(E)}(1)$. Assume further that the base $B$ is a quasi-projective variety equipped with a $G$-linearized ample line bundle. % More precisely, we ask
%\begin{itemize}
%    \item $B$ is a smooth and (quasi-)projective variety;
%    \item $f: X \rightarrow B$ is projective and flat with geometric fibers being $\mathbb{P}^n$ (thus smooth and étale-locally a trivial $\mathbb{P}^n$-bundle);
%    \item $X$ and $B$ are $G$-varieties and $f: X \rightarrow B$ is $G$-equivariant;
%    \item there exists a $G$-equivariant relatively ample line bundle $L_X$ on $X$ over $f:X\rightarrow B$ and a $G$-equivariant ample line bundle $L_B$ on $B$.
%\end{itemize}

\par Consider a $G$-representation $V$ with $\dim V = v$ such that there exists an open subset $U\subseteq V$ on which $G$ acts freely and $\operatorname{codim}_U V \geq d_X-l$. We denote by $X_G = X \times U / G$, $B_G = B \times U / G$ and $f_G : X_G \to B_G$ the induced map. 

\begin{lemma}\label{lemma: quotients}
    We may choose $V$ and $U$ such that:
    \begin{enumerate}
        \item[(i)] $B_G$ is a quasi-projective variety (and it is smooth when $B$ is);
        \item[(ii)] $f_G$ satisfies the hypothesis of Theorem \ref{algebraic cycle bundle lemma}. 
    \end{enumerate}
\end{lemma}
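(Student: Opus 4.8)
The statement asks for a choice of $G$-representation $V$ and free open $U \subseteq V$ with $\operatorname{codim}_U (V \setminus U) \geq d_X - l$ such that the induced quotients $B_G$ and $X_G$ behave well. The existence of arbitrarily good such pairs $(V, U)$ — with the codimension of the complement as large as we like, and with $U$ the locus of a free $G$-action — is exactly the content of the standard construction underlying equivariant Chow groups (Edidin–Graham, following Totaro). So the first move is to invoke that: we may take $V$ to be a large direct sum of copies of a faithful representation (or, since $G$ is linear algebraic, embed $G \hookrightarrow \mathrm{GL}_N$ and use $\operatorname{Hom}(\mathbb{C}^N, \mathbb{C}^M)$ for $M \gg 0$), obtaining $U \subseteq V$ open with free $G$-action and $\operatorname{codim}_U(V\setminus U)$ as large as desired. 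Increasing $M$ only increases this codimension, so the constraint $\operatorname{codim} \geq d_X - l$ is harmless; I will keep $V, U$ flexible and impose further largeness conditions as the proof demands.

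**Part (i): $B_G$ is quasi-projective.** Here I would use the hypothesis that $B$ carries a $G$-linearized ample line bundle. By a theorem on quotients (this is where GIT / the results of e.g. Mumford, or the treatment in Edidin–Graham §3, enter), a variety with a $G$-linearized ample line bundle admits a quasi-projective quotient when $G$ acts freely (more precisely: the $G$-action on $B \times U$ is free since it is free on $U$, and the linearization on $B$ pulls back and tensors with a linearization on $U \subseteq V$ to give a $G$-linearized ample bundle on $B \times U$; every point is then properly stable, so the quotient $(B \times U)/G = B_G$ exists as a quasi-projective scheme, indeed a variety since $B \times U$ is irreducible and the quotient map is open). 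Smoothness of $B_G$ when $B$ is smooth is immediate: $U$ is smooth (open in a vector space), $B \times U \to B_G$ is a $G$-torsor hence smooth and surjective, and smoothness descends along faithfully flat maps.

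**Part (ii): $f_G$ satisfies the hypotheses of Theorem \ref{algebraic cycle bundle lemma}.** I need to check, for the induced $f_G : X_G \to B_G$: that $X_G$ is a quasi-projective variety, that $f_G$ is flat, projective, and surjective, and that the fiberwise Chow-group condition $\mathrm{CH}_{l-i}((X_G)_b) = \mathbb{Q}$ holds for $0 \leq i \leq \min(l, d_{B_G})$. Quasi-projectivity of $X_G$ follows from part (i) applied to $X$ in place of $B$: the $G$-linearized relatively ample $L$ on $X$, tensored with the pullback of the $G$-linearized ample bundle on $B$, is $G$-linearized and ample on $X$ (relatively ample plus pullback of ample is ample for a projective morphism), so the same quotient construction applies. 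Flatness, properness/projectivity and surjectivity of $f_G$ all descend from the corresponding properties of $f \times \mathrm{id}_U : X \times U \to B \times U$ along the $G$-torsor $B \times U \to B_G$, since these are fppf-local on the target. For the fiber condition: a fiber of $f_G$ over a point of $B_G$ is isomorphic to a fiber $X_b$ of the original $f$ (the $U$-direction and the $G$-quotient do not change the fibers), so $\mathrm{CH}_{l-i}((X_G)_b) = \mathrm{CH}_{l-i}(X_b) = \mathbb{Q}$; I just need the range of $i$ to be covered, i.e. $\min(l, d_{B_G}) \leq \min(l, d_B)$, which holds because $d_{B_G} = d_B + \dim U - g$ could a priori be larger than $d_B$ — so here I must be slightly careful and instead note that the required vanishing for Theorem \ref{algebraic cycle bundle lemma} is indexed by $i$ from $0$ to $d_X - d_B$ (the relative dimension, which is unchanged: $d_{X_G} - d_{B_G} = d_X - d_B$), and the fiberwise hypothesis we assumed on $f$ was stated for all $0 \le i \le \min(l,d_B)$; one checks that the relative dimension $d_X - d_B \le d_B$ is automatic in the Brauer–Severi-type situations of interest, or simply that only fibers are used and those are unchanged.

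**Main obstacle.** The only genuinely non-formal point is part (i): producing a \emph{quasi-projective} quotient rather than just an algebraic space or scheme. This is where the $G$-linearized ample line bundle on $B$ is essential and cannot be dropped — it is exactly the input that lets one run the GIT argument (all points stable $\Rightarrow$ geometric quotient is quasi-projective). Everything else is bookkeeping: transporting flatness, projectivity, and the fiberwise Chow condition through the torsor $B \times U \to B_G$, and observing that passing to $U$ and quotienting preserves fibers while only improving the codimension of the bad locus.
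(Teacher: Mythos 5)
Your overall strategy is the same as the paper's: approximate $\mathrm{B}G$ by $U\subseteq V$ with large complement codimension, use the $G$-linearized ample bundle on $B$ together with GIT to get a quasi-projective quotient $B_G$, descend smoothness, flatness and surjectivity along the torsor $B\times U\to B_G$, and observe that the fibers of $f_G$ agree with those of $f$. Parts of this are fine and match the paper (which routes the GIT step through \cite[Proposition~7.1]{MFK_GIT} applied to the relatively ample linearization of $B\times U\to U$ over the torsor $U\to U/G$, rather than tensoring up to an absolute ample linearization on $B\times U$ as you do; both are standard).

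There is, however, one step where your argument as written is wrong: you claim that projectivity of $f_G$ descends from $f\times\mathrm{id}_U$ because projectivity is ``fppf-local on the target.'' It is not. Properness, flatness and surjectivity are fppf-local on the base, but projectivity (and quasi-projectivity) are not: being projective means admitting a relatively ample line bundle, and such a bundle on the cover only descends if it comes equipped with descent data. This is precisely the role of the $G$-linearization in the hypotheses, and it is the one genuinely non-formal point of part (ii) --- arguably more so than the GIT step you single out. The paper handles it by descending the ambient projective bundle itself: the $G$-linearized $E$ gives a vector bundle $E_G=(E\times U)/G$ on $B_G$, the map $\mathbb{P}_B(E)\times U\to\mathbb{P}_{B_G}(E_G)$ is a $G$-torsor, and hence $f_G$ factors as a closed immersion $X_G\hookrightarrow\mathbb{P}_{B_G}(E_G)$ followed by the projective bundle projection, which exhibits $f_G$ as projective. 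Your proof has all the needed hypotheses available (you even use the linearized relatively ample $L$ to get quasi-projectivity of $X_G$), so the gap is repairable, but the appeal to fppf-locality must be replaced by an actual descent of the relatively ample bundle or of the ambient $\mathbb{P}_B(E)$.
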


\begin{proof}
    This is classical. We supply the reader with the references. 
    
    First, by \cite[proof of Lemma 9]{Edidin-Gram}, we can assume $[U/G]$ is represented by a quasi-projective variety $U/G$. Moreover, since $U \to U/G$ is flat, surjective and locally of finite presentation and $ U$ is smooth, by \cite[Lemma 74.8.4]{SP}, $U/G$ is also smooth.
    
    Then $B \times U \to U$  admits a $G$-linearized relatively ample line bundle (namely the pullback of the one on $B$) and $U \to U/G$ is a $G$-torsor. Therefore by \cite[Proposition 7.1]{MFK_GIT}, $[(B \times U)/G]$ is represented by a quasi-projective variety $B_G$. As before, \cite[Lemma 74.8.4]{SP} implies that $B_G$ is smooth when $B$ is. It is clear that $B_G$ is irreducible.

    Finally, the map $f_G$ is flat after the smooth base change $B \times U \ \to B_G$, and thus, by \cite[Lemma  35.23.27]{SP}, $f_G$ is flat. Clearly, all the geometric fibers of $f_G$ are isomorphic to those of $f$, so, in order to conclude we only need to prove that $f_G$ is projective. First observe that, as above, $[(\mathbb{P}_B(E) \times U) /G]$ is represented by a variety $\mathbb{P}_B(E)_G$ and that $E_G=(E \times U)/G$ is a line bundle over $B_G$. Moreover, the map 
    $$
    \mathbb{P}_B(E) \times U \to \mathbb{P}_{B_G}(E_G)
    $$ 
    is a $G$-torsor, being the pullback of $B \times U \to B_G$ via $\mathbb{P}_{B_G}(E_G) \to B_G$. Thus 
    $$
    \mathbb{P}_B(E)_G= \mathbb{P}_{B_G}(E_G)
    $$
    is a projective bundle over $B_G$ and we clearly have a factorization
    $$
    f_G : X_G \hookrightarrow \mathbb{P}_B(E)_G \to B_G.
    $$
    This concludes the proof of the lemma.
\end{proof}

%$V$ may be properly chosen such that $X_G$ and $B_G$ are quasi-projective schemes (see \cite[Proposition 7.1]{MFK_GIT} and \cite[Section 2.7 and Proposition 23]{Edidin-Gram}). In such case, $X_G$ and $B_G$ are both irreducible (and thus are varieties) because $X$ and $B$ are irreducible. Moreover, we have
%\begin{itemize}
%    \item $B_G$ is smooth, by \XY{cite Stacks Project Lemma 101.33.7} because $B\times U$ is smooth;
%    \item $f_G: X_G \rightarrow B_G$ is smooth because it gives rise to a smooth morphism under smooth base change (see the Cartesian diagram below);
%    \begin{equation*}
%    \begin{tikzcd}
%    X \times U \arrow{d}\arrow{r} & X_G \arrow{d} \\
%    B \times U \arrow{r} & B_G 
%    \end{tikzcd}    
%    \end{equation*}
%    \item $f_G$ is surjective with geometric fibers $P^n$ because $f$ is;
%    \item $f_G$ is projective as $L^G_X := L_X \times U / G$ gives a relatively ample line bundle on $X_G$.
%\end{itemize}

We may thereby bootstrap Theorem \ref{algebraic cycle bundle lemma} to the $G$-equivariant settings.

\begin{proposition} \label{proposition: equivariant Chow of Brauer Severi}
Let $l \in \Z_{\geq 0}$ and $f:X \rightarrow B$ be as above. Then, 
\[
\bigoplus_{i=0}^{d_X-d_B} \alpha^{i} \circ f^*: \quad \bigoplus_{i=0}^{d_X-d_B} \CH_{l-i}^{G}(B) \xrightarrow{\sim} \CH_l^{G}(X)
\]
is an isomorphism, where $f^* = f_G^*$ is the equivariant pullback and $\alpha = c_1^G(L) \in \CH^1_G(X)$ is the ($G$-equivariant) hyperplane section.
\end{proposition}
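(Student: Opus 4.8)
The plan is to bootstrap the non-equivariant Theorem~\ref{algebraic cycle bundle lemma} to the equivariant setting via the Edidin--Graham finite-dimensional approximation of equivariant Chow groups, with Lemma~\ref{lemma: quotients} supplying the geometric input. Fix $l$. First I would choose a $G$-representation $V$ of dimension $v$ and an open $U \subseteq V$ on which $G$ acts freely, with $V \smallsetminus U$ of codimension sufficiently large (as in the setup preceding Lemma~\ref{lemma: quotients}), and chosen moreover so that the conclusions of that lemma hold: writing $X_G = (X \times U)/G$, $B_G = (B \times U)/G$ and $f_G \colon X_G \to B_G$, the variety $B_G$ is quasi-projective and $f_G$ is flat, projective, surjective and satisfies the hypothesis of Theorem~\ref{algebraic cycle bundle lemma}. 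Note $X_G$ is itself a variety, being the base of the $G$-torsor $X \times U \to X_G$ with $X \times U$ integral. The crucial bookkeeping point is that a \emph{single} uniform codimension bound ($> d_X - l$) simultaneously puts all the finitely many Chow groups that will occur in the stable range, so that $\CH^G_l(X) = \CH_{l+v-g}(X_G)$ and $\CH^G_{l-i}(B) = \CH_{l-i+v-g}(B_G)$ for all $0 \le i \le d_X - d_B$, where $g = \dim G$ (indeed, the worst constraint among these, coming from $\CH^G_{l-i}(B)$ at $i = d_X - d_B$, is again $> d_X - l$).

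Next I would descend the linearized data. Since $L$ is $G$-linearized it descends to a line bundle $L_G$ on $X_G$, and the factorization $f_G \colon X_G \hookrightarrow \mathbb{P}_{B_G}(E_G) \to B_G$ (with $E_G = (E \times U)/G$) produced in the proof of Lemma~\ref{lemma: quotients} identifies $L_G$ with the restriction of $\mathcal{O}_{\mathbb{P}(E_G)}(1)$; thus $L_G$ is $f_G$-relatively ample and $\alpha_G := c_1(L_G)$ is a relative hyperplane class for $f_G$ in the sense of the Remark following Theorem~\ref{algebraic cycle bundle lemma}. Under the canonical identifications $\CH^*(X_G) \cong \CH^*_G(X)$ and $\CH^*(B_G) \cong \CH^*_G(B)$ one has $\alpha_G \mapsto c_1^G(L) = \alpha$ and $f_G^* \mapsto f^*$, and the relative dimensions agree: $d_{X_G} - d_{B_G} = d_X - d_B$.

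Now I would apply Theorem~\ref{algebraic cycle bundle lemma} to $f_G$, with the integer $l + v - g$ in place of $l$ and with $\alpha_G$ as the relative hyperplane class; its hypothesis is legitimate by Lemma~\ref{lemma: quotients}(ii). This yields the isomorphism
$$
\bigoplus_{i=0}^{d_X - d_B} \alpha_G^{\,d_X - d_B - i} \circ f_G^* \colon \quad \bigoplus_{i=0}^{d_X - d_B} \CH_{l - i + v - g}(B_G) \ \xrightarrow{\ \sim\ } \ \CH_{l + v - g}(X_G),
$$
and rewriting each term through the identifications of the previous paragraph turns it into the isomorphism asserted in the Proposition. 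Finally I would check independence of the auxiliary pair $(V,U)$: source and target are by definition independent of it in the stable range, and $f^*$ and capping with $\alpha$ are intrinsic operations on equivariant Chow groups, so the displayed map is well defined on equivariant groups.

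The argument carries no serious obstacle beyond this bookkeeping. The two points needing care are exactly the ones flagged above: that one pair $(V,U)$ computes all of $\CH^G_l(X)$ and $\CH^G_{l-i}(B)$ at once (handled by the uniform codimension bound), and that the hypothesis of Theorem~\ref{algebraic cycle bundle lemma} genuinely descends to $f_G$ --- which is precisely the content of Lemma~\ref{lemma: quotients}(ii), whose proof exhibits $f_G$ as flat, projective and surjective with the same geometric fibers as $f$. Once these are in place the proof is a direct transcription of Vial's theorem through the quotient construction.
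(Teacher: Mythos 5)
Your proposal is correct and follows essentially the same route as the paper: invoke Lemma~\ref{lemma: quotients} to verify that $f_G\colon X_G\to B_G$ satisfies the hypotheses of Theorem~\ref{algebraic cycle bundle lemma}, then use the Edidin--Graham identifications $\CH^G_l(X)=\CH_{l+v-g}(X_G)$ and $\CH^G_{l-i}(B)=\CH_{l-i+v-g}(B_G)$ to transport the non-equivariant isomorphism. The extra bookkeeping you supply (uniform codimension bound, descent of $L$ to $\alpha_G$, independence of $(V,U)$) is implicit in the paper's shorter argument and is correctly handled.
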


\begin{proof}
    By Lemma \ref{lemma: quotients}, the morphism $f_G: X_G \rightarrow B_G$ satisfies all requirements of Lemma \ref{algebraic cycle bundle lemma}. Now, by definition of equivariant Chow groups, 
\[
\CH^G_{l}(X) = \CH_{  l + v - g} (X_G)
\]
and
\[
\CH^G_{l-i}(B) = \CH_{l-i+v-g} (B_G), \quad 0 \leq i \leq n.
\]
The proposition then follows from Lemma \ref{algebraic cycle bundle lemma} applied to $f_G$.
\end{proof}

\appendix

\section{Appendix by Alexis Aumonier: on the cycle map} \label{Appendix 2}

Using the notation of Sections~\ref{sec: reduction} and \ref{sec: proof of thm2}, the cycle class maps are the natural morphisms of rings
\begin{align*}
    \CH^*(\PNCd) \cong \CH^*_{\mathrm{GL}_3}(\Sym^d (V^\vee) \smallsetminus \tDeltan) &\to H^*_{\mathrm{GL}_3}(\Sym^d (V^\vee) \smallsetminus \tDeltan), \\
    \CH^*(\PSCd) \cong \CH^*_{\mathrm{GL}_3}(\Sym^d (V^\vee) \smallsetminus \tDeltas) &\to H^*_{\mathrm{GL}_3}(\Sym^d (V^\vee) \smallsetminus \tDeltas)
\end{align*}
from equivariant Chow groups to (Borel) equivariant cohomology. In this appendix, we explain how to apply the results of \cite{hprinciple} to compute the cohomology groups appearing in the codomains, though only in a range of degrees growing linearly with $d$, and show that the cycle maps are isomorphisms in that range.

Let us first consider the case of smooth plane curves. In \cite[Corollary~1, p.~431]{tommasi} Tommasi shows that the equivariant cohomology of the moduli of smooth plane curves (in fact, smooth hypersurfaces) vanishes in the range of degrees $0 < * < (d+1)/2$ when $d \geq 3$. In particular, the cycle class map is an isomorphism in that range by Theorem~\ref{thm2} for $d \geq 3$. Unfortunately, we do not know what happens in the range $* \geq (d+1)/2$. On the other hand, we will prove a similar result in the case of nodal curves below. Our strategy will be to compare the space of nodal polynomials to an appropriate subspace of the continuous section space of the bundle of principal parts.

Let us write $\mathcal{P}^2(\mathcal{O}(d))$ for the total space of the bundle of second order principal parts (we shall never consider the sheaf itself in this appendix, hence no confusion will arise). As smooth (as opposed to algebraic) vector bundles, the exact sequence~\eqref{eqn: exact sequence defining K} splits and gives an isomorphism
\[
    \mathcal{P}^2(\mathcal{O}(d)) \cong \mathcal{K}_d \oplus \mathcal{P}^1(\mathcal{O}(d))
\]
and we define
\[
    \fTn = \mathbb{V} \oplus 0 \subset \mathcal{K}_d \oplus \mathcal{P}^1(\mathcal{O}(d)).
\]
In other words, using the notation of Section~\ref{sec: the key diagram}, $\fTn$ is the subset of the bundle of principal parts $\mathcal{P}^2(\mathcal{O}(d))$ consisting of those points $(\ell, B_\ell, 0)$ where $\ell \in \mathbb{P}^2$ and $B_\ell: T_\ell \mathbb{P}^2 \times T_\ell \mathbb{P}^2 \to \mathcal{O}(d)|_{\ell}$ is a degenerate symmetric bilinear form. From the identification of the global sections
\[
    \Sym^d (V^\vee) \cong \Gamma(\mathbb{P}^2, \mathcal{O}(d))
\]
we see that $\tDeltan$ consists precisely of those sections whose second order expansion have a value in $\fTn$. A direct consequence of the main theorem of \cite{hprinciple} is the following:

\begin{theorem}[{\cite[Theorem~2.13]{hprinciple}}]\label{thm:hprinciple}
Let $\Gamma_{\mathcal{C}^0}(-)$ denote the topological space (with the compact-open topology) of continuous sections of a bundle. The map
\[
    \Sym^d (V^\vee) \smallsetminus \tDeltan \to \Gamma_{\mathcal{C}^0}(\mathbb{P}^2, \mathcal{P}^2(\mathcal{O}(d)) \smallsetminus \fTn),
\]
sending a section of $\mathcal{O}(d)$ to its second order expansion, induces an isomorphism in cohomology in the range of degrees $* < d$.
\end{theorem}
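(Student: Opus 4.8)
The plan is to deduce the statement directly from the h-principle of \cite{hprinciple}, so that the only real work is to set up the correct dictionary and to check that the hypotheses of \cite[Theorem~2.13]{hprinciple} are met in our situation. I would take there $X=\mathbb{P}^2$, the ample line bundle $\mathcal{O}(1)$ with $d$ in the role of its power, the jet order $r=2$, so that the relevant jet bundle is the bundle of second order principal parts $\mathcal{P}^2(\mathcal{O}(d))$ and the space of algebraic sections is $\Gamma(\mathbb{P}^2,\mathcal{O}(d))=\Sym^d(V^\vee)$. The distinguished closed subset is $\fTn\subset\mathcal{P}^2(\mathcal{O}(d))$, and the map in the statement is exactly the scanning (jet evaluation) map $F\mapsto\bigl(\ell\mapsto j^2_\ell F\bigr)$, landing in the space of continuous sections of $\mathcal{P}^2(\mathcal{O}(d))$ that avoid $\fTn$.

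The first step is the translation $\Sym^d(V^\vee)\smallsetminus\tDeltan=\{F:\operatorname{im}(j^2F)\cap\fTn=\emptyset\}$. For a fixed $\ell\in\mathbb{P}^2$, the image $f(j^2_\ell F)=j^1_\ell F$ of $j^2_\ell F$ under the canonical map $f$ of \eqref{eqn: exact sequence defining K} vanishes precisely when $F$ is singular at $\ell$; when it does, $j^2_\ell F$ lies in the \emph{canonical} subbundle $\mathcal{K}_d$, and its component there — now independent of the auxiliary smooth splitting of \eqref{eqn: exact sequence defining K} used in this appendix to define $\fTn$ — is the Hessian $\operatorname{Hess}_\ell(F)$. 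By \cite[Chapter X, Lemma~2.3]{ACGH}, used already in \S\ref{sec: the key diagram}, the point $\ell$ is a worse-than-nodal singularity of $V(F)$ exactly when $F$ is singular at $\ell$ and $\operatorname{Hess}_\ell(F)$ is degenerate, i.e. exactly when $j^2_\ell F\in\V\oplus 0=\fTn$. Summing over $\ell$ gives the equality, identifying $\Sym^d(V^\vee)\smallsetminus\tDeltan$ with the space of $\fTn$-nonsingular sections.

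Next I would verify the hypotheses on $\fTn$ demanded by \cite[Theorem~2.13]{hprinciple}. It is closed: by \S\ref{sec: the key diagram}, $\V\subset\mathsf{Tot}(\mathcal{K}_d)$ is the Cartier divisor cut out by $\wedge^2 s$, and $\fTn=\V\oplus 0$. It is invariant under fibrewise rescaling by $\mathbb{G}_m$, since by Remark~\ref{remark: fiber of V} each fibre of $\V$ is an affine cone of degenerate symmetric matrices. And it has positive fibrewise codimension: $\V$ is a hypersurface in the rank-$3$ bundle $\mathcal{K}_d$, so $\fTn$ has fibrewise codimension $4$ inside the rank-$6$ bundle $\mathcal{P}^2(\mathcal{O}(d))$. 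Together with the mild regularity conditions of loc. cit. (which $\fTn$, being a cone bundle, visibly satisfies), this places us within the scope of \cite[Theorem~2.13]{hprinciple}, which then asserts that the scanning map is a homology isomorphism — hence a cohomology isomorphism in the same range of degrees — and whose explicit connectivity bound, specialized to $X=\mathbb{P}^2$, $r=2$ and fibrewise codimension $4$, is exactly $*<d$.

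The main obstacle is not conceptual but one of faithful matching: one must confirm that $\fTn$ meets \emph{all} the admissibility hypotheses of \cite[Theorem~2.13]{hprinciple} as they are formulated there (the genuinely hard content — the h-principle itself — being supplied by that reference), and that the numerical bound, once the parameters $(\dim X,r,\text{fibrewise codim})=(2,2,4)$ are inserted, collapses precisely to $*<d$ rather than to a nearby constant. The one authentically geometric point, already isolated above, is that the condition ``$j^2_\ell F\in\fTn$'' does not depend on the auxiliary smooth splitting of \eqref{eqn: exact sequence defining K}: this is immediate, since the splitting only affects the description of jets whose first-order part is \emph{nonzero}, while every point of $\fTn$ has vanishing first-order part.
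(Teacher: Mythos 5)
Your proposal is correct and follows essentially the same route as the paper: both reduce the statement to verifying the hypotheses of \cite[Theorem~2.13]{hprinciple}, with the codimension-$4$ computation for $\fTn$ (via Remark~\ref{remark: fiber of V}) supplying the range $*<d$. The only hypothesis the paper makes explicit that you leave implicit is the jet-ampleness input --- $\mathcal{O}(d)$ is $d$-jet ample because $\mathcal{O}(1)$ is very ample, which is where the $d$ in the bound actually enters --- while your extra paragraph identifying $\Sym^d(V^\vee)\smallsetminus\tDeltan$ with the $\fTn$-avoiding sections is carried out in the paper in the text preceding the theorem rather than inside its proof.
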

\begin{proof}
It suffices to verify the assumptions of \cite[Theorem~2.13]{hprinciple}. The line bundle $\mathcal{O}(1)$ is very ample, so $\mathcal{O}(d)$ is $d$-jet ample (see \cite[Example~2.2]{hprinciple}). Furthermore $\fTn$ is a subvariety of $\mathcal{P}^2(\mathcal{O}(d))$ of complex codimension $4$. Indeed, by Remark~\ref{remark: fiber of V} the variety $\mathbb{V}$ has complex dimension $4$. Noting that the complex vector bundle $\mathcal{P}^2(\mathcal{O}(d))$ has rank $6$ over the $2$-dimensional base $\mathbb{P}^2$, and using that $\fTn = \mathbb{V} \oplus 0 \cong \mathbb{V}$ has dimension $4$, we get the claimed codimension. In particular, in the notation of \cite[Definition~2.10]{hprinciple}, the excess is $e(\fTn) = 4$. This gives the range of degrees in the theorem.
\end{proof}
The main advantage of the continuous section space resides in its amenability to techniques of homotopy theory. In particular, its rational cohomology is easy to compute:
\begin{corollary}\label{cor:stablecohomology}
There is an isomorphism of graded rings
\[
    H^*(\Sym^d (V^\vee) \smallsetminus \tDeltan; \mathbb{Q}) \cong \Lambda(x_3,x_5,x_7)
\]
in degrees $* < d$, where the right hand is an exterior algebra on generators in degrees $3,5$ and $7$.
\end{corollary}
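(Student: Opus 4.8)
The plan is to compute the rational cohomology of the continuous section space appearing in Theorem~\ref{thm:hprinciple} and transport the answer back to $\Sym^d(V^\vee)\smallsetminus\tDeltan$ in the stable range $* < d$. By that theorem we have a ring isomorphism
\[
    H^*(\Sym^d (V^\vee) \smallsetminus \tDeltan; \Q) \cong H^*\big(\Gamma_{\mathcal{C}^0}(\mathbb{P}^2, \mathcal{P}^2(\mathcal{O}(d)) \smallsetminus \fTn); \Q\big)
\]
for $* < d$, so it suffices to understand the right-hand side. The key point is that $\fTn \cong \mathbb{V}$ has complex codimension $4$ in the rank-$6$ bundle $\mathcal{P}^2(\mathcal{O}(d))$ over $\mathbb{P}^2$, so the fiber of $\mathcal{P}^2(\mathcal{O}(d)) \smallsetminus \fTn$ over a point of $\mathbb{P}^2$ is $\mathbb{C}^6$ minus a cone of complex codimension $4$, which is simply connected and whose cohomology we can compute. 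First I would identify this fiber up to homotopy: the complement of the affine cone of degenerate symmetric $2\times 2$ matrices (i.e. the quadric cone $\{x_1x_2 = x_3^2\}$, of complex dimension $2$ in $\mathbb{C}^3$) inside $\mathbb{C}^3$, crossed with the contractible complementary directions. Passing to the link, this complement deformation retracts onto a space whose rational cohomology I would compute via the long exact sequence of the pair, or by recognizing it in terms of the homogeneous space $\mathrm{SL}_2/\mathbb{G}_m$-type description of nondegenerate conics.

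Next I would feed this fiberwise computation into a study of the section space. Because the base $\mathbb{P}^2$ is low-dimensional and the relevant fiber is highly connected (its first nonvanishing reduced cohomology sits in a degree large enough to matter only through a single class), the section space $\Gamma_{\mathcal{C}^0}(\mathbb{P}^2, \mathcal{P}^2(\mathcal{O}(d)) \smallsetminus \fTn)$ should be accessible by obstruction-theoretic or scanning-type arguments: one filters $\mathbb{P}^2$ by skeleta and analyzes the resulting tower of fibrations on section spaces, or invokes a Federer-type spectral sequence with $E_2$-term $H^{-p}(\mathbb{P}^2; \pi_q(\text{fiber}))$. The expectation, matching the shape of the answer, is that only a few homotopy groups of the fiber contribute in the stable range, producing exactly three exterior generators. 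I would pin down their degrees $3, 5, 7$ by tracking how the fiber's cohomology classes (in degrees $3$ and possibly higher, reflecting the codimension-$4$ complement) spread out over the three cells of $\mathbb{P}^2$ in dimensions $0, 2, 4$: a degree-$3$ fiber class over the $0$-, $2$-, and $4$-cells yields contributions in degrees $3$, $5$, and $7$ respectively. Checking that these classes are exterior (odd degree forces this over $\Q$ once one knows the space is, rationally, a product of odd spheres in the range) completes the ring structure.

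The main obstacle I anticipate is making the section-space computation rigorous and precise enough to conclude that the answer is \emph{exactly} $\Lambda(x_3, x_5, x_7)$ with no extra classes or relations in the range $* < d$ — in particular, ruling out further generators coming from higher cohomology of the fiber and confirming that the multiplicative structure is the naive one. One clean way to sidestep delicate obstruction theory is to note that, rationally, the fiber is equivalent in the relevant range to an odd sphere (or a product thereof), so the section space is rationally equivalent to a mapping space $\mathrm{Map}(\mathbb{P}^2, S^{\mathrm{odd}})$, whose rational cohomology is computed directly by Thom's theorem / the evaluation fibration: $H^*(\mathrm{Map}(\mathbb{P}^2, S^{2k-1}); \Q) \cong \Lambda\big(H^*(\mathbb{P}^2;\Q) \otimes H^*(S^{2k-1};\Q)\text{-degree-shift}\big)$, which for $\mathbb{P}^2$ gives precisely an exterior algebra on three odd generators. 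Identifying $2k-1 = 3$ from the codimension-$4$ computation then yields generators in degrees $3, 3+2, 3+4 = 3,5,7$, and the stable range $* < d$ is inherited from Theorem~\ref{thm:hprinciple}. Verifying that the ring isomorphism of Theorem~\ref{thm:hprinciple} is multiplicative (which is asserted there) finishes the argument.
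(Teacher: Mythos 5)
Your overall strategy is the paper's: reduce to the continuous section space via Theorem~\ref{thm:hprinciple}, rationalize the fibre, and compute $\mathrm{Map}(\mathbb{P}^2,-)$ by Thom's theorem. But your identification of the fibre is wrong, and that is the crux of the whole computation. Since $\fTn = \V \oplus 0 \subset \mathcal{K}_d \oplus \mathcal{P}^1(\mathcal{O}(d))$, the fibre of $\mathcal{P}^2(\mathcal{O}(d)) \smallsetminus \fTn$ over a point is $(\mathbb{C}^3 \times \mathbb{C}^3) \smallsetminus (Q \times \{0\})$, where $Q$ is the affine quadric cone of degenerate forms --- \emph{not} $(\mathbb{C}^3 \smallsetminus Q) \times \mathbb{C}^3$ as in your ``crossed with the contractible complementary directions''. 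The latter deformation retracts onto $\mathbb{C}^3 \smallsetminus Q$, which is rationally a circle ($H^*\cong\Lambda(x_1)$); the former is the join $S^5 \ast (\mathbb{C}^3 \smallsetminus Q) \simeq \Sigma^6(\mathbb{C}^3 \smallsetminus Q)$, hence rationally $S^7$ (consistent with $\fTn$ having real codimension $8$, so the complement is $6$-connected). Your degree bookkeeping is also reversed: for $\mathrm{Map}(\mathbb{P}^2, K(\mathbb{Q},n))$ Thom's theorem gives $\pi_i = H^{n-i}(\mathbb{P}^2;\mathbb{Q})$, so an $n$-dimensional fibre class contributes generators in degrees $n-0, n-2, n-4$, not $n, n+2, n+4$. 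With your value $2k-1=3$ you would obtain $\Lambda(x_1,x_3)$, not $\Lambda(x_3,x_5,x_7)$; only $n=7$ produces the stated answer. The two mistakes happen to cancel numerically, but the argument as written does not establish the result.

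A second, smaller gap: replacing the section space by a mapping space requires knowing that the fibrewise rationalization of $\mathcal{P}^2(\mathcal{O}(d)) \smallsetminus \fTn$ is a \emph{trivial} fibration, which you assert but do not justify. The paper checks that the monodromy acts trivially on the rational cohomology of the fibre (using the complex orientation), so that the rationalized bundle is a principal $K(\mathbb{Q},7)$-bundle over $\mathbb{P}^2$, and then that its classifying class lies in $H^8(\mathbb{P}^2;\mathbb{Q})=0$. This is not cosmetic: a Federer-type spectral sequence with trivial coefficients only pins down the rational homotopy groups ($\mathbb{Q}$ in degrees $3,5,7$), and a simply connected rational space with those homotopy groups need not have free cohomology (its minimal model could have $dx_7 = x_3x_5$). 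The product decomposition into Eilenberg--MacLane spaces, and hence the ring isomorphism claimed in the corollary, comes precisely from the triviality of the bundle.
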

The proof uses standard manipulations and facts from rational homotopy theory. We have tried to give many references for the unfamiliar reader.
\begin{proof}
By Theorem~\ref{thm:hprinciple} it suffices to compute the rational cohomology of the continuous section space of $\mathcal{P}^2(\mathcal{O}(d)) \smallsetminus \fTn$. By construction, this bundle is homotopy equivalent to the fibrewise join 
\begin{equation}\label{eqn:joinbundle}
    S(\mathcal{P}^1(\mathcal{O}(d))) \ \ast_{\mathbb{P}^2} \ (\Sym^2(\Omega_{\mathbb{P}^2}) \otimes \mathcal{O}(d) \smallsetminus \mathbb{V})
\end{equation}
where $S(-)$ denotes the sphere bundle. Let $x \in \mathbb{P}^2$ be a point, write $D_x = (\Sym^2(\Omega_{\mathbb{P}^2}) \otimes \mathcal{O}(d) \smallsetminus \mathbb{V})|_x$ for the fibre of the second factor above $x$, and notice that the fibre of the first factor is the $5$-sphere $S^5 \cong S(\mathbb{C}^3) \cong S(\mathcal{P}^1(\mathcal{O}(d)))|_x$. Using that joining with an $n$-sphere amounts up to homotopy to suspending $n+1$ times, we obtain that the fibre of~\eqref{eqn:joinbundle} is homotopy equivalent to
\[
    S^5 \ast D_x \simeq \Sigma^6 D_x.
\]
By \cite[Proposition~13.9]{FHT}, the suspension $\Sigma^6D_x$ is rationally equivalent to wedge of Moore spaces. A computation, see eg. \cite[Table~2, page~808]{damon}, shows that $H^*(D_x;\mathbb{Q}) \cong \Lambda(x_1)$, hence $D_x$ is rationally a Moore space, and so is its suspension. We conclude that $\Sigma^6D_x$ is rationally equivalent to the Moore space $M(\mathbb{Q},7)$, or equivalently the Eilenberg--MacLane space $K(\mathbb{Q},7)$. In fact, using the complex orientation of the complex vector bundle $\mathcal{P}^2(\mathcal{O}(d))$, one deduces that the monodromy action is trivial on the rational cohomology of the fibre of $\mathcal{P}^2(\mathcal{O}(d)) \smallsetminus \fTn$. Therefore, its fibrewise rationalisation is a principal $K(\mathbb{Q},7)$-bundle over $\mathbb{P}^2$. By definition, such a bundle is pulled back from the universal bundle over $K(\mathbb{Q},8)$, hence classified by the represented cohomology class (usually called the Euler class, see \cite[Example~4, page~202]{FHT}). This cohomology class is in degree $8$, hence must vanish on $\mathbb{P}^2$, thus showing that the fibrewise rationalised bundle is trivial.

Now, the rational cohomology of the continuous section space is isomorphic to the cohomology of the section space of the fibrewise rationalised bundle, see eg. \cite[Theorem~5.3]{moller}. Using the triviality established above, we obtain rational equivalences:
\[
    \Gamma_{\mathcal{C}^0}(\mathbb{P}^2, \mathcal{P}^2(\mathcal{O}(d)) \smallsetminus \fTn)_\mathbb{Q} \simeq \mathrm{Map}(\mathbb{P}^2, K(\mathbb{Q},7)) \simeq K(\mathbb{Q},3) \times K(\mathbb{Q},5) \times K(\mathbb{Q},7).
\]
For the last equivalence, see \cite{haefliger} for example. The result then follows from the computation of the cohomology of the right hand side.
\end{proof}

We now turn our attention to the $\mathrm{GL}_3$-equivariant cohomology and the cycle class map. The following is the main result of this appendix.
\begin{theorem}
Let $d \geq 6$. Then the cycle class map
\[
    \CH^*_{\mathrm{GL}_3}(\Sym^d (V^\vee) \smallsetminus \tDeltan) \to H^*_{\mathrm{GL}_3}(\Sym^d (V^\vee) \smallsetminus \tDeltan)
\]
is injective. Furthermore, it is an isomorphism in the range of degrees $* < d$.
\end{theorem}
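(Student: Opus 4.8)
The plan is to lift the appendix's analysis to the $\mathrm{GL}_3$-homotopy quotient, run the Leray--Serre spectral sequence over $\mathrm{BGL}_3$, and recognize its differentials as the equivariant Chow computation of \S\ref{sec: proof of thm1}. First, observe that the $2$-jet map of Theorem~\ref{thm:hprinciple} is $\mathrm{GL}_3$-equivariant, being natural in the pair $(\mathbb{P}^2,\mathcal{O}(d))$ on which $\mathrm{GL}_3$ acts. Applying the Borel construction gives a map of fibrations over $\mathrm{BGL}_3$ inducing the $2$-jet map on fibres; since $\mathrm{GL}_3(\mathbb{C})$ is connected (so $\mathrm{BGL}_3$ is simply connected), the Zeeman comparison theorem upgrades Theorem~\ref{thm:hprinciple} to an isomorphism
\[
    H^*_{\mathrm{GL}_3}(\Sym^d(V^\vee)\smallsetminus\tDeltan;\mathbb{Q})\ \xrightarrow{\ \sim\ }\ H^*(R;\mathbb{Q}),\qquad R:=E\mathrm{GL}_3\times_{\mathrm{GL}_3}\Gamma_{\mathcal{C}^0}\!\big(\mathbb{P}^2,\mathcal{P}^2(\mathcal{O}(d))\smallsetminus\fTn\big),
\]
in degrees $*<d$. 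Here $R\to\mathrm{BGL}_3$ is a fibration whose fibre is the section space, which by the proof of Corollary~\ref{cor:stablecohomology} is rationally $K(\mathbb{Q},3)\times K(\mathbb{Q},5)\times K(\mathbb{Q},7)$. So it remains to compute $H^*(R;\mathbb{Q})$ as a $\mathbb{Q}[c_1,c_2,c_3]$-algebra, and to track the cycle map.

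The second step is to run the rational Leray--Serre spectral sequence of $R\to\mathrm{BGL}_3$, with $E_2=\mathbb{Q}[c_1,c_2,c_3]\otimes\Lambda(x_3,x_5,x_7)$; since the fibre is $2$-connected and $H^*(\mathrm{BGL}_3)$ is concentrated in even degrees, $x_3,x_5,x_7$ are transgressive, and one must compute $\tau(x_3)\in H^4$, $\tau(x_5)\in H^6$, $\tau(x_7)\in H^8$ of $\mathrm{BGL}_3$. These transgressions are the fibrewise integrals of the equivariant Euler class of the relevant bundle: running the proof of Corollary~\ref{cor:stablecohomology} $\mathrm{GL}_3$-equivariantly over $Y:=E\mathrm{GL}_3\times_{\mathrm{GL}_3}\mathbb{P}^2$, the bundle $\mathcal{P}^2(\mathcal{O}(d))\smallsetminus\fTn\to\mathbb{P}^2$ is smoothly and equivariantly the fibrewise join $S(\mathcal{P}^1(\mathcal{O}(d)))\ast_{\mathbb{P}^2}(\mathcal{K}_d\smallsetminus\V)$; the first factor is the sphere bundle of $\mathcal{O}(d-1)\otimes V^\vee$, while the discriminant map (the $\mathbb{P}^2$-family of the fibration $\mathbb{C}^3\smallsetminus\{\text{quadric cone}\}\to\mathbb{C}^\times$) together with the monodromy computation behind Remark~\ref{remark: fiber of V} exhibits $\mathcal{K}_d\smallsetminus\V$, rationally and equivariantly over $\mathbb{P}^2$, as the circle bundle of the discriminant line bundle $K_{\mathbb{P}^2}^{\otimes2}(2d)$ from~\eqref{eqn: exact sequence defining K}. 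Hence $(\mathcal{P}^2(\mathcal{O}(d))\smallsetminus\fTn)_{h\mathrm{GL}_3}\to Y$ is, after fibrewise rationalisation and using the complex orientation, a principal $K(\mathbb{Q},7)$-bundle with Euler class $e=c_3^{\mathrm{GL}_3}(\mathcal{O}(d-1)\otimes V^\vee)\cdot c_1^{\mathrm{GL}_3}(K_{\mathbb{P}^2}^{\otimes2}(2d))\in H^8_{\mathrm{GL}_3}(\mathbb{P}^2;\mathbb{Q})$, and the section-space description of the spectral sequence gives $\tau(x_3)=\pi_*e$, $\tau(x_5)=\pi_*(eh)$, $\tau(x_7)=\pi_*(eh^2)$, where $\pi_*\colon H^*_{\mathrm{GL}_3}(\mathbb{P}^2)\to H^{*-4}(\mathrm{BGL}_3)$ is integration along $\mathbb{P}^2$ (extracting the $h^2$-coefficient after reducing modulo $h^3+c_1h^2+c_2h+c_3$). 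But $\pi_*(eh^y)$ is, term for term, the expression produced on the Chow side: $\tau(x_3)=r(\beta_{00})$, $\tau(x_5)=r(\beta_{01})$, $\tau(x_7)=r(\beta_{02})$.

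Third, feed this into the spectral sequence. For $d\geq4$ the ideal $(r(\beta_{00}),r(\beta_{01}),r(\beta_{02}))$ has Artinian quotient $\mathbb{Q}[c_1]/(c_1^4)$ by Theorem~\ref{thm1}, so these classes form a system of parameters, hence a regular sequence in $\mathbb{Q}[c_1,c_2,c_3]$; the spectral sequence collapses to $H^*(R;\mathbb{Q})=\mathbb{Q}[c_1,c_2,c_3]/(r(\beta_{00}),r(\beta_{01}),r(\beta_{02}))=\CH^*(\PNCd)$. Combined with Step~1 and the fact that both the cycle map and this identification send $c_i\mapsto c_i$, the cycle map is an isomorphism in degrees $*<d$. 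For injectivity in all degrees it suffices, since $\CH^*(\PNCd)=\mathbb{Q}[c_1]/(c_1^4)$, to show $c_1^3\neq0$ in $H^6_{\mathrm{GL}_3}$; this comes from the edge homomorphism, as $E_\infty^{6,0}=H^6(\mathrm{BGL}_3)/\langle c_1\cdot r(\beta_{00}),\,r(\beta_{01})\rangle$ and one checks directly that $c_1^3$ is not in that span for $d\geq4$. The hypothesis $d\geq6$ enters exactly here: $\tau(x_5)=d_6(x_5)$ has source in fibre-degree $5$, so Step~1 transports it from the section-space spectral sequence only once $5<d$; for $d=4,5$ this transgression is not controlled. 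The remaining degree-by-degree verification in the range $*<d$ is then routine, using that $\Lambda(x_3,x_5,x_7)$ is exterior on odd generators.

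The main obstacle I expect is Step~2: making the equivariant identification of $\mathcal{P}^2(\mathcal{O}(d))\smallsetminus\fTn$ as a rational $S^7$-bundle with the explicit Euler class $e$ — in particular checking that the join decomposition, the monodromy argument, and the complex-orientation argument of the appendix all go through $\mathrm{GL}_3$-equivariantly (equivalently, fibrewise over $\mathrm{BGL}_3$ over the base $Y$) — and then matching the resulting section-space transgressions with the push-forwards $r(\beta_{0y})$ of \S\ref{sec: proof of thm1}. Once $e$ is pinned down, everything else is formal homological algebra with the regular sequence $(r(\beta_{00}),r(\beta_{01}),r(\beta_{02}))$.
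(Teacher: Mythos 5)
Your route is genuinely different from the paper's. The paper never attempts to make the h\nobreakdash-principle/section-space analysis equivariant: it quotes Konovalov's splitting $H^*(\Sym^d(V^\vee)\smallsetminus\tDeltan)\cong H^*(\mathrm{SL}_3(\C))\otimes H^*_{\mathrm{SL}_3}(\Sym^d(V^\vee)\smallsetminus\tDeltan)$, combines it with Corollary~\ref{cor:stablecohomology} to get $H^*_{\mathrm{SL}_3}\cong\Lambda(x_7)$ in degrees $*<d$, and then runs the Serre spectral sequence of the fibration over $B\C^\times$ coming from $\mathrm{SL}_3\to\mathrm{GL}_3\to\C^\times$. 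Crucially, it does not compute the remaining differential independently: it deduces from Theorem~\ref{thm1} and the $H^*(\mathrm{BGL}_3)$-linearity of the cycle map that $c_1^4$ must die in cohomology, which forces $d_8$ to be surjective, and the Leibniz rule plus the vanishing of $\widetilde H^{\le 5}_{\mathrm{SL}_3}$ finishes both claims. Your plan instead computes the three transgressions of the $\mathrm{GL}_3$-equivariant section-space fibration from first principles and matches them with $r(\beta_{00}),r(\beta_{01}),r(\beta_{02})$; if carried out, this would give an independent verification that the cohomological relations coincide with the Chow relations, but at the cost of substantially more equivariant rational homotopy theory.

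The genuine gap is the one you flag yourself, and it is the heart of the argument: the identification $\tau(x_{7-2i})=\pm\,\pi_*(e\cdot h^{i})$ with $e=c_3^{\mathrm{GL}_3}(\mathcal{O}(d-1)\otimes V^\vee)\cdot c_1^{\mathrm{GL}_3}(K_{\mathbb{P}^2}^{\otimes 2}(2d))$. This needs (a) a fibrewise-over-$Y$ version of the join decomposition and of the rational identification of $\mathcal{K}_d\smallsetminus\V$ with the complement of the zero section of $K_{\mathbb{P}^2}^{\otimes 2}(2d)$ --- note that the appendix's own trivialization argument uses that a degree-$8$ class vanishes on the $4$-dimensional $\mathbb{P}^2$, which is exactly what fails over $Y$, so the principal-$K(\Q,7)$-fibration structure and its Euler class must be established by a different argument --- and (b) the transgression formula for relative section spaces of principal $K(\Q,7)$-fibrations, a real theorem in the rational homotopy theory of section spaces that you neither prove nor cite. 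Until (a) and (b) are supplied, the central computation is an assertion. Two smaller points: the smooth splitting of \eqref{eqn: exact sequence defining K} must be chosen compatibly with the group action (averaging over $U(3)$ suffices, since $BU(3)\simeq \mathrm{BGL}_3$); and for $d=6$ the injectivity step needs $c_1^3\neq 0$ in $H^6_{\mathrm{GL}_3}$, a degree not covered by the comparison isomorphism in degrees $*<d$, so you must --- as you hint --- run the Serre spectral sequence of the algebraic Borel construction itself and import from the section-space side only the differentials emanating from fibre degrees $\le 5$, where the comparison does apply. The paper's $B\C^\times$ spectral sequence disposes of this point with no extra work, since $c_1^3$ sits on the bottom row and can only be hit from rows $q\le 5$, where $\widetilde H^{q}_{\mathrm{SL}_3}$ vanishes.
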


\begin{remark}
When $d=4$, the cohomology groups of $\Sym^d (V^\vee) \smallsetminus \tDeltan$ are completely given by \cite[Theorem~2]{BGK}. The proof below then shows that the cycle class map is an isomorphism in this case. 
\end{remark}

\begin{remark}
It is very plausible that the result also holds when $d = 5$ as the bound of Theorem~\ref{thm:hprinciple} is perhaps not optimal. In fact, one could wonder if the cycle class map is an isomorphism also for $d \geq 5$. However we do not have any evidence for this stronger claim, nor a reliable strategy to prove or disprove it.
\end{remark}

\begin{proof}
In \cite[Theorem~2]{konovalov} Konovalov shows that for $d \geq 4$ there is an isomorphism of rational cohomology rings
\[
    H^*(\Sym^d (V^\vee) \smallsetminus \tDeltan) \cong H^*(\mathrm{SL}_3(\mathbb{C})) \otimes H^*_{\mathrm{SL}_3}(\Sym^d (V^\vee) \smallsetminus \tDeltan).
\]
Combined with Corollary~\ref{cor:stablecohomology} and the computation $H^*(\mathrm{SL}_3(\mathbb{C})) \cong \Lambda(x_3,x_5)$ (see eg. \cite[Table~1, page~807]{damon}), we obtain an isomorphism of graded rings 
\begin{equation}\label{eqn:stableSLcohomology}
    H^*_{\mathrm{SL}_3}(\Sym^d (V^\vee) \smallsetminus \tDeltan) \cong \Lambda(x_7) 
\end{equation}
in degrees $*<d$. In particular the reduced cohomology vanishes in degrees $* \leq 5$ when $d \geq 6$. From the short exact sequence of groups
\[
    0 \to \mathrm{SL}_3(\mathbb{C}) \to \mathrm{GL}_3(\mathbb{C}) \overset{\mathrm{det}}{\to} \mathbb{C}^\times \to 0
\]
we obtain a fibration
\[
    (\Sym^d (V^\vee) \smallsetminus \tDeltan) \sslash \mathrm{SL}_3(\mathbb{C}) \to (\Sym^d (V^\vee) \smallsetminus \tDeltan) \sslash \mathrm{GL}_3(\mathbb{C}) \to B\mathbb{C}^\times,
\]
where $- \sslash -$ denotes the Borel construction. The Serre spectral sequence associated to this fibration has signature
\[
    E^{p,q}_2 = H^p(B\mathbb{C}^\times) \otimes H^q_{\mathrm{SL}_3}(\Sym^d (V^\vee) \smallsetminus \tDeltan) \implies H^{p+q}_{\mathrm{GL}_3}(\Sym^d (V^\vee) \smallsetminus \tDeltan).
\]
By the vanishing result above, there is no room for differentials hitting the $0$th row in degrees $* \leq 6$ in the Serre spectral sequence. The classes there survive and are therefore non-zero in the cohomology of the total space. But these classes are exactly $1, c_1, c_1^2$ and $c_1^3$. The injectivity part of the theorem then follows from Theorem~\ref{thm1} and the fact that the cycle class map is a morphism of $H^*(\mathrm{BGL}_3(\mathbb{C}))$-algebras. In fact, more is true by this linearity: as $c_1^4$ vanishes in the Chow ring it has to vanish in cohomology as well. This means that the differential $d_8 \colon E^{0,7}_8 \to E^{8,0}_8 = E^{8,0}_2 \cong \mathbb{Q}\cdot c_1^4$ is surjective. Combined with the Leibniz rule, this shows that all the differentials $d_8 \colon E^{*,7}_8 \to E^{8+*,0}_8$ from the seventh row are surjective. Using the computation~\eqref{eqn:stableSLcohomology}, one sees that these differentials are isomorphisms when $d > 7$ and are the only differentials between groups in total degree $* < d$ as long as $d \geq 6$. This proves the second claim.
\end{proof}

\nocite{*}
\bibliography{bibliography}
\bibliographystyle{alpha}

$\,$\
\noindent
\textsc{Department of Mathematics, ETH Zürich, Rämistrasse 101, 8092 Zürich, Switzerland}

\textit{e-mail address:} \href{mailto:alessio.cela@math.ethz.ch}{alessio.cela@math.ethz.ch}

$\,$\
\noindent

$\,$\
\noindent
\textsc{Department of Pure Mathematics {\it \&} Mathematical Statistics, 
University of Cambridge, Cambridge, UK}

\textit{e-mail address:} \href{mailto:au270@cam.ac.uk}{au270@cam.ac.uk}

$\,$\
\noindent

$\,$\
\noindent
$\,$\
\noindent
\textsc{Sorbonne Université and Université Paris Cité, CNRS, IMJ-PRG, F-75005 Paris, France}

\textit{e-mail address:} \href{mailto:xiaohanyan@imj-prg.fr}{xiaohanyan@imj-prg.fr}

$\,$\
\noindent

$\,$\
\noindent
$\,$\
\noindent
\textsc{Department of Pure Mathematics {\it \&} Mathematical Statistics, 
University of Cambridge, Cambridge, UK}

\textit{e-mail address:} \href{mailto:aa2099@cam.ac.uk}{aa2099@cam.ac.uk}

\end{document}